\numberwithin{equation}{section}
\newtheorem{theorem}{Theorem}[section]
\newtheorem{proposition}[theorem]{Proposition}
\newtheorem{lemma}[theorem]{Lemma}
\theoremstyle{definition}
\newtheorem{definition}[theorem]{Definition}
\newtheorem{example}[theorem]{Example}
\theoremstyle{remark}
\newtheorem{remark}[theorem]{Remark}
\renewcommand{\hom}{\operatorname{Hom}}
\renewcommand{\ker}{\operatorname{Ker}}
\newcommand{\Z}{\mathbb{Z}}
\newcommand{\Q}{\mathbb{Q}}
\newcommand{\R}{\mathbb{R}}
\newcommand{\C}{\mathbb{C}}
\newcommand{\proj}{{\mathbb P}}
\newcommand{\AG}{A(\Gamma)}
\newcommand{\AGcpt}{A(\Gamma)^{\Sigma}}
\newcommand{\XG}{X^{n}(\Gamma)}
\newcommand{\Sp}{{\rm Sp}(\Lambda)}
\newcommand{\D}{\mathcal{D}}
\newcommand{\volD}{{\rm vol}_{\mathcal{D}}}
\newcommand{\G}{\Gamma}
\newcommand{\X}{\mathcal{X}}
\newcommand{\Xn}{\mathcal{X}^{(n)}}
\newcommand{\LG}{{\rm LG}(\Lambda_{\mathbb{C}})}
\newcommand{\LI}{\Lambda(I)}
\newcommand{\LIQ}{\Lambda(I)_{\mathbb{Q}}}
\newcommand{\LIC}{\Lambda(I)_{\mathbb{C}}}
\newcommand{\GIQ}{\Gamma(I)_{\mathbb{Q}}}
\newcommand{\SpQ}{{\rm Sp}(\Lambda_{\mathbb{Q}})}
\newcommand{\SpLIQ}{{\rm Sp}(\Lambda(I)_{\mathbb{Q}})}
\newcommand{\GLIQ}{{\rm GL}(I_{\mathbb{Q}})}
\newcommand{\WIQ}{W(I)_{\mathbb{Q}}}
\newcommand{\UIQ}{U(I)_{\mathbb{Q}}}
\newcommand{\UIC}{U(I)_{\mathbb{C}}}
\newcommand{\UIR}{U(I)_{\mathbb{R}}}
\newcommand{\VIQ}{V(I)_{\mathbb{Q}}}
\newcommand{\GIZ}{\Gamma(I)_{\mathbb{Z}}}
\newcommand{\WIZ}{W(I)_{\mathbb{Z}}}
\newcommand{\UIZ}{U(I)_{\mathbb{Z}}}
\newcommand{\VIZ}{V(I)_{\mathbb{Z}}}
\newcommand{\GIZbar}{\overline{\Gamma(I)}_{\mathbb{Z}}}
\newcommand{\sym}{{\rm Sym}^{2}}
\newcommand{\XI}{\mathcal{X}_{I}^{(n)}}
\newcommand{\VI}{\mathcal{V}_{I}}
\newcommand{\DI}{\mathcal{D}(I)}
\newcommand{\DLI}{\mathcal{D}_{\Lambda(I)}}
\newcommand{\BI}{\mathcal{B}_{I}}
\newcommand{\TI}{\mathcal{T}_{I}}
\newcommand{\LQ}{\Lambda_{\mathbb{Q}}}
\newcommand{\V}{\mathbb{V}}
\begin{document}

\title[]{Universal abelian variety and Siegel modular forms}
\author[]{Shouhei Ma}
\thanks{Supported by JSPS KAKENHI 15H05738 and 17K14158.} 
\address{Department~of~Mathematics, Tokyo~Institute~of~Technology, Tokyo 152-8551, Japan}
\email{ma@math.titech.ac.jp}

\begin{abstract}
We prove that 
the ring of Siegel modular forms of weight divisible by $g+n+1$ is isomorphic to   
the ring of (log) pluricanonical forms on the $n$-fold Kuga family of abelian varieties 
and its certain compactifications, 
for every arithmetic group for a symplectic form of rank $2g>2$. 
We also give applications to the Kodaira dimension of the Kuga variety. 
In most cases, the Kuga variety has canonical singularities. 
\end{abstract} 

\maketitle


\section{Introduction}\label{sec: intro} 

Our purpose in this article is to establish a correspondence between 
Siegel modular forms and pluricanonical forms on the universal family of abelian varieties 
and its compactification, which connects modular forms to the geometry of the universal family. 
Let $\Lambda$ be a free ${\Z}$-module of rank $2g>2$ 
equipped with a nondegenerate symplectic form $\Lambda\times\Lambda\to{\Z}$, 
and $\Gamma$ be a finite-index subgroup of the symplectic group ${\Sp}$ of $\Lambda$. 
Let ${\AG}={\D}/{\G}$ be the Siegel modular variety defined by ${\G}$, 
where ${\D}$ is the Hermitian symmetric domain attached to $\Lambda$. 
Over ${\AG}$ we have the $n$-fold Kuga family ${\XG}\to{\AG}$, 
whose general fibers are 
$n$-fold self products of the abelian varieties or their quotient by $-1$, 
according to whether $-1\not\in{\G}$ or $-1\in{\G}$. 
The space ${\XG}$ is a normal quasi-projective variety of dimension $g(g+2n+1)/2$.  
Let $M_k(\Gamma)$ be the space of Siegel modular forms of weight $k$ with respect to $\Gamma$, 
and $\textit{S}_k(\Gamma)$ be the subspace of cusp forms. 
Our starting point is the following correspondence. 

\begin{theorem}[\S \ref{sec: proof 1.1}]\label{main thm: interior}
We have a natural isomorphism of graded rings 
\begin{equation}\label{main eqn: interior}
\bigoplus_{m\geq0}H^0({\XG}, K_{{\XG}}^{\otimes m}) \simeq 
\bigoplus_{m\geq0} M_{(g+n+1)m}(\Gamma).  
\end{equation}
If $X$ is a smooth projective model of ${\XG}$, 
the $m=1$ component of \eqref{main eqn: interior} induces an isomorphism 
\begin{equation}\label{main eqn: smooth proj cusp form}
H^{0}(X, K_{X}) \simeq \textit{S}_{g+n+1}({\G}). 
\end{equation}
\end{theorem}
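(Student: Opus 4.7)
The strategy is to express the canonical bundle of $\XG$ as the pullback of a power of the Hodge line bundle, and then match global sections to modular forms. Write $\lambda = \det\omega$ for the determinant of the Hodge bundle on $\AG$, and let $\pi \colon \XG \to \AG$ be the Kuga projection. Over the smooth locus of $\XG$ one expects the identification
\[ K_{\XG} \cong \pi^{\ast} \lambda^{\otimes (g+n+1)}. \]
This decomposes into two standard inputs: the relative canonical satisfies $K_{\XG/\AG} \cong \pi^{\ast}\lambda^{\otimes n}$, since $\Omega^1_{A/\AG}$ is fiberwise trivial and pulled back from $\omega$ along the zero section, and the $n$-fold self-product then contributes $\lambda^{n}$; while the Kodaira--Spencer isomorphism $T\AG \cong \sym \omega^{\vee}$ gives $K_{\AG} \cong \lambda^{\otimes(g+1)}$. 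When $-1 \in \Gamma$ the formula is obtained by descending from the universal family $A^{n}$ to its $\pm 1$-quotient, using that $-1$ acts trivially on the pulled-back bundle over the smooth locus.

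Combining this with $\pi_{\ast}\mathcal{O}_{\XG} = \mathcal{O}_{\AG}$ (connected fibers) and the projection formula, together with normality of $\XG$ so that sections of the reflexive $K_{\XG}^{[m]}$ are determined in codimension one, yields
\[ H^{0}(\XG, K_{\XG}^{\otimes m}) \cong H^{0}(\AG, \lambda^{\otimes(g+n+1)m}). \]
By Koecher's principle, valid because $g \geq 2$, the right-hand side is exactly $M_{(g+n+1)m}(\Gamma)$. Since the projection formula is compatible with products of sections, the identification is automatically an isomorphism of graded rings, establishing \eqref{main eqn: interior}.

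For the cusp form statement, pass to a toroidal compactification $\overline{\XG}$ of $\XG$ and a resolution $X \to \overline{\XG}$. Since $H^{0}(X, K_{X})$ is a birational invariant, it suffices to compute $H^{0}(\overline{\XG}, K_{\overline{\XG}})$ in the reflexive sense, provided $\overline{\XG}$ has at most canonical singularities so that pluricanonical forms extend across the exceptional divisors of any resolution. The boundary analogue of the interior formula should read
\[ K_{\overline{\XG}} \cong \pi^{\ast} \bar\lambda^{\otimes(g+n+1)}(-D), \]
where $\bar\lambda$ is Mumford's canonical extension of $\lambda$ and $D$ is the total boundary divisor of the Kuga compactification. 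Sections of $\bar\lambda^{\otimes k}$ that vanish along the boundary are by definition Siegel cusp forms of weight $k$, yielding \eqref{main eqn: smooth proj cusp form}.

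The principal obstacle is the control of singularities and boundary contributions. At interior singularities (from $-1$-fixed points or finite stabilizers in $\Gamma$) one must verify that the reflexive canonical sheaf really coincides with $\pi^{\ast}\lambda^{g+n+1}$, rather than acquiring twists from ramification; this is a local computation but requires care when the finite group acts nontrivially on the fiber direction. At the boundary, pinning down the precise formula for $K_{\overline{\XG}}$ on a Kuga-type toroidal compactification, and verifying the canonical-singularities hypothesis (promised as a separate conclusion of the paper), form the technical heart of the argument.
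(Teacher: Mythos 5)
For the interior isomorphism \eqref{main eqn: interior}, your argument is essentially the paper's: you identify $K_{\XG}^{\otimes m}$ with $f^{\ast}L^{\otimes(g+n+1)m}$ by combining $K_{f}\simeq f^{\ast}L^{\otimes n}$ with the Kodaira--Spencer isomorphism $K_{\AG}\simeq L^{\otimes g+1}$, and then push down using connectedness of fibers. Your appeal to normality of $\XG$ and codimension-one control of reflexive sections is the same content as the paper's Lemma 7.1, which shows that $\mathcal{X}^{(n)}\to\XG$ is unramified in codimension one, so that descent to a general arithmetic $\Gamma$ via a torsion-free normal subgroup introduces no twist.

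For the cusp form statement \eqref{main eqn: smooth proj cusp form} you take a genuinely different route, and there is a gap. You propose to compute $K_{\overline{\XG}}$ on a toroidal Kuga compactification, identify boundary-vanishing sections of $\bar\lambda^{\otimes(g+n+1)}$ with cusp forms, and pass to a smooth model via birational invariance of $h^{0}(K)$. To go from the singular $\overline{\XG}$ to a resolution $X$ you need $\overline{\XG}$ to have canonical singularities; but the paper's Proposition \ref{prop: cano sing interior} establishes canonical singularities only for the \emph{interior} $\XG$ (with exceptions), and says nothing about the boundary of a compactification. In fact Theorem \ref{main thm: full cpt} makes the point explicitly: the surjectivity of the cusp form map onto $H^{0}(K_{X}^{\otimes m}((m-1)\Delta_{X}))$ requires a klt hypothesis on $(X,(1-m^{-1})\Delta_{X})$, so the singularity condition is a real hypothesis, not a freebie. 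Your proposed boundary formula $K_{\overline{\XG}}\cong \pi^{\ast}\bar\lambda^{\otimes(g+n+1)}(-D)$ with coefficient $1$ on all of $D$ also needs justification; the multiplicities depend on the fan data and on the ramification of $f$ along the boundary, which is precisely what the paper's Theorem \ref{main thm: full cpt} is engineered to avoid.

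The paper's own proof of \eqref{main eqn: smooth proj cusp form} bypasses all boundary geometry. Proposition \ref{prop: L2 criterion modular} shows that the $L^{2}$ norm $\int_{\XG}\omega\wedge\bar\omega$ of the canonical form equals the Petersson integral $\int_{\AG}(F,F)_{g+n+1}\volD$ of the corresponding weight-$(g+n+1)$ form. One then uses two facts: a holomorphic top-degree form on a quasi-projective variety extends to \emph{any} smooth projective model if and only if it is square-integrable, and (Proposition \ref{prop: asymptotic Petersson norm}) the Petersson integral converges if and only if $F$ is a cusp form. This is self-contained, uniform over all smooth projective models, and requires no canonical bundle computation or singularity analysis at the boundary of a compactification --- which is exactly why the statement can be proven before Theorem \ref{main thm: full cpt} rather than as a consequence of it. If you want to pursue your route you would need to supply the missing inputs on $\overline{\XG}$, and that would amount to proving a version of Theorem \ref{main thm: full cpt} first.
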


The isomorphism \eqref{main eqn: interior} is a consequence of 
natural isomorphisms between the relevant line bundles on $\mathcal{D}$. 
A higher analogue of \eqref{main eqn: interior} is given 
in the form of a Leray spectral sequence that relates vector-valued Siegel modular forms 
to the cohomology of $K_{{\XG}}^{\otimes m}$ (\S \ref{ssec: spectral sequence}). 
The isomorphism \eqref{main eqn: smooth proj cusp form} is a direct extension of 
the result of Hatada \cite{Ha} who considered the case ${\G}<{\rm Sp}(2g, {\Z})$ torsion-free, 
and is a generalization of the result of Shioda \cite{Sh} and Shokurov \cite{Shoku} in the case $g=1$.  

Our main result is an extension of \eqref{main eqn: interior} 
to a certain class of compactification $X$ of ${\XG}$. 
Although our principal interest would be in compact $X$, the result also holds for not fully compact $X$ as well. 

\begin{theorem}[\S \ref{sec: proof 1.2}]\label{main thm: full cpt}
Let $X$ be a complex analytic variety which contains ${\XG}$ as a Zariski open set. 
Assume that 
\begin{itemize}
\item the singular locus of $X$ has codimension $\geq 2$, 
\item ${\XG}\to{\AG}$ extends to a morphism $X\to {\AGcpt}$ to 
some toroidal compactification ${\AGcpt}$ of ${\AG}$, and  
\item every irreducible component of the boundary divisor 
$\Delta_{X}=X-{\XG}$ of $X$ dominates  
some irreducible component of the boundary divisor 
$\Delta_{A}={\AGcpt}-{\AG}$ of ${\AGcpt}$. 
\end{itemize}
Then the isomorphism \eqref{main eqn: interior} extends to an isomorphism 
\begin{equation}\label{main eqn: cpt}
\bigoplus_{m\geq0}H^0(X, K_{X}^{\otimes m}(m\Delta_{X})) \simeq 
\bigoplus_{m\geq0} M_{(g+n+1)m}(\Gamma).  
\end{equation}
For each $m$, this gives an injection 
\begin{equation}\label{main eqn: cpt cusp form}
\textit{S}_{(g+n+1)m}(\Gamma)\hookrightarrow H^0(K_{X}^{\otimes m}((m-1)\Delta_{X})). 
\end{equation}
If $X$ is compact and the singularities of the pair $(X, (1-m^{-1})\Delta_{X})$ is 
Kawamata log terminal over general points of $\Delta_{A}$, 
then \eqref{main eqn: cpt cusp form} is also surjective. 
\end{theorem}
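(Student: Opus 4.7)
My plan is to bootstrap from Theorem~\ref{main thm: interior} by analyzing pole orders of $m$-canonical forms along the components of $\Delta_X$. Since $X_{\rm sing}$ has codimension at least $2$, Hartogs for reflexive rank-$1$ sheaves reduces all three statements to an analysis on the smooth locus, so I may assume $X$ is smooth and $\Delta_X$ is a divisor. By the third hypothesis every component of $\Delta_X$ lies over some component of $\Delta_A$, and for compact $X$ every component of $\Delta_A$ is dominated by some component of $\Delta_X$, so the boundary behavior is determined generically over $\Delta_A$.

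\textbf{Local pole-order formula.} Near a generic point of a component $D_A \subset \Delta_A$, the toroidal structure of $\AGcpt$ yields standard local coordinates in which $D_A = \{q = 0\}$, and Mumford's formula $K_{\AGcpt}(\log \Delta_A) \cong \lambda^{\otimes (g+1)}$ holds explicitly (with $\lambda$ the Hodge line bundle). Combining this with Kuga-fiber coordinates yields $K_X(\log \Delta_X) \cong \lambda_X^{\otimes (g+n+1)}$ generically along each component $D_X \subset \Delta_X$ above $D_A$ (where $\lambda_X$ denotes the pullback of $\lambda$), and under this identification the isomorphism of Theorem~\ref{main thm: interior} locally reads $f \mapsto f \cdot \omega_0^{\otimes m}$ for an explicit meromorphic $m$-canonical form $\omega_0$ with pole of exact order $m$ along $D_X$. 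A direct computation then gives
\[
\operatorname{ord}_{D_X}(\omega_f) \;=\; e_{D_X/D_A} \cdot \operatorname{ord}_{D_A}(f) \;-\; m,
\]
where $e_{D_X/D_A}$ is the ramification index of $X \to \AGcpt$ along $D_X$. Thus $\omega_f$ has pole of order at most $m$ along every $D_X$ (Koecher's principle for $g \geq 2$) giving \eqref{main eqn: cpt}, and pole of order at most $m-1$ exactly when $f$ vanishes along the corresponding $D_A$, giving \eqref{main eqn: cpt cusp form}.

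\textbf{Surjectivity under KLT and the main obstacle.} For surjectivity, any section $\omega_f \in H^0(X, K_X^{\otimes m}((m-1)\Delta_X))$ has pole of order at most $m-1$ along every $D_X$, so by the formula $f$ vanishes along every $D_A$ dominated by some $D_X$, which is every component of $\Delta_A$ when $X$ is compact; hence $f$ is a cusp form \emph{provided the pole-order formula applies at generic points of each $D_A$}. This is exactly what the KLT hypothesis on $(X, (1-m^{-1})\Delta_X)$ over generic points of $\Delta_A$ guarantees: a standard discrepancy computation on a log resolution $\pi: \tilde X \to X$ gives the embedding
\[
H^0(X, K_X^{\otimes m}((m-1)\Delta_X)) \;\hookrightarrow\; H^0(\tilde X, K_{\tilde X}^{\otimes m}((m-1)\Delta_{\tilde X})),
\]
on which the formula of the second paragraph applies since $\tilde X$ is smooth. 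The main obstacle is the local pole-order formula itself: it requires a clean extension of the interior identity $K_{\XG} \cong \lambda_X^{\otimes (g+n+1)}$ to the log-canonical identity $K_X(\log \Delta_X) \cong \lambda_X^{\otimes (g+n+1)}$, demanding careful analysis of the relative canonical sheaf of the Kuga family near the toroidal boundary and of its degenerate (semi-abelian) fibers.
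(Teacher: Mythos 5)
Your proposal relies on the ``local pole-order formula'' which, as you yourself flag at the end, requires extending the interior identity $K_{\XG}\simeq f^{\ast}L^{\otimes(g+n+1)}$ to a log-canonical isomorphism $K_{X}(\log\Delta_{X})\simeq f^{\ast}L^{\otimes(g+n+1)}$ generically along $\Delta_{X}$. This is a genuine gap, not merely a technical point to be filled in later: the hypotheses on $X$ in Theorem~\ref{main thm: full cpt} are deliberately very weak (codimension-$2$ singular locus, a morphism to \emph{some} toroidal compactification, dominance of boundary components), and they impose no control on the local structure of $X$ near $\Delta_{X}$. For two different compactifications $X_{1}\to X_{2}$ satisfying the hypotheses, $K_{X_{1}}(\log\Delta_{X_{1}})$ and the pullback of $K_{X_{2}}(\log\Delta_{X_{2}})$ differ by discrepancy terms, while $f^{\ast}L^{\otimes(g+n+1)}$ pulls back to itself; so the proposed identity cannot literally hold for both, and establishing it even for a single nice toroidal model $X$ requires a careful analysis of the degenerate (semi-abelian) Kuga fibers. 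Passing to a log resolution $\tilde X$ to treat the klt case does not help, because you would then need the pole-order formula on $\tilde X$, which is the same problem.

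The paper circumvents this entirely by working analytically. The key device is Proposition~\ref{prop: L2 criterion modular}, which shows that $\int_{f^{-1}(B)}\|\omega\|^{2/m}$ over the \emph{interior} equals $\int_{B}(F,F)_{k}^{1/m}\volD$ for the corresponding modular form $F$; combined with the Petersson-norm asymptotics of Proposition~\ref{prop: asymptotic Petersson norm} (which only uses the toroidal structure of $\AGcpt$, not of $X$) and the $L^{2/m}$ pole-order criterion of Propositions~\ref{prop: L2 criterion}, \ref{prop: L2 snc}, this yields the desired pole bounds on any $X$ satisfying the hypotheses, with no knowledge of the boundary geometry of $X$. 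That is precisely the feature emphasized after the theorem statement in the Introduction, and it is what your algebraic approach misses. If you want to pursue your route, you would first have to restrict $X$ (e.g.\ to a toroidal model à la Namikawa/Faltings--Chai/Lan), prove the log-canonical extension there via the theory of semi-abelian degenerations, and then reduce the general case to this one; as written, the pole-order formula is asserted but not established.
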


Namikawa \cite{Na} was the first to construct an example of such a compactification $X$ 
for $n=1$ and ${\G}$ the principal congruence subgroups of ${\rm Sp}(2g, {\Z})$ of even level $\geq4$, 
where $X$ is nonsingular and is a projective family over the 2nd Voronoi compactification of ${\AG}$. 
Namikawa constructed his $X$ as a toroidal compactification of $X^{1}({\G})$.  
General theory of toroidal compactification of ${\XG}$ has been then developed in \cite{Pi}, \cite{FC}, \cite{La1}. 
A feature of Theorem \ref{main thm: full cpt} is that 
it is obtained without knowing specific geometry of the boundary of $X$. 

The isomorphism \eqref{main eqn: cpt} is derived by showing that 
every $m$-canonical form $\omega$ on ${\XG}$ has at most pole of order $m$ 
along every component of $\Delta_{X}$ (a Koecher type statement). 
We deduce this property by deriving an asymptotic estimate of 
the $L^{2/m}$ norm of $\omega$ around $\Delta_{X}$. 
As a key step, we use the isomorphism \eqref{main eqn: interior} 
to translate the $L^{2/m}$ norm of $\omega$ into the Petersson norm of the corresponding modular form. 
The problem is then reduced to asymptotic estimate of 
the Petersson norm of modular forms around $\Delta_{A}$, 
which is derived by a standard calculation. 

Theorem \ref{main thm: full cpt} generalizes the result of \cite{Ma} in the case $g=1$, 
which in turn is another generalization of the Shioda isomorphism. 
Several new features arise in the case $g>1$, such as 
the Koecher principle and toroidal compactification, 
which make some part of the story different from the case $g=1$. 

We also give applications of Theorems \ref{main thm: interior} and \ref{main thm: full cpt} 
to the Kodaira dimension $\kappa({\XG})$ of ${\XG}$. 
Note that $\kappa({\XG})$ is nondecreasing with respect to $n$ (\cite{Ue}), 
and is bounded by $g(g+1)/2$ (\cite{Ii}). 
In particular, $\kappa({\XG})=g(g+1)/2$ when ${\AG}$ is of general type. 
In general, the isomorphism \eqref{main eqn: smooth proj cusp form} and knowledge about cusp forms 
tell us a bound of $n$ for $\kappa({\XG})\geq 0$ (cf.~Example \ref{ex: g=2-6}). 
Moreover, we have the following. 

\begin{theorem}[\S \ref{sec: Proof 1.3}]\label{main thm: Kodaira dim}
(1) Let $k_{0}$ be a weight such that $\textit{S}_{k_{0}}({\G})$ gives 
a generically finite map ${\AG}\dashrightarrow {\proj}^{N}$. 
Then $\kappa({\XG})=g(g+1)/2$ for $n\geq k_{0}-g-1$. 

(2) Let $X\supset {\XG}$ be a normal compact complex analytic variety 
which satisfies the conditions in Theorem \ref{main thm: full cpt}. 
Then we have 
\begin{equation}\label{main eqn: Kodaira dim}
\kappa(A({\G})^{\Sigma}, \: (g+n+1)L-\Delta_{A}) \: \leq \: 
\kappa(K_{X}) \: \leq \: 
g(g+1)/2,  
\end{equation}
where $L$ is the ${\Q}$-line bundle of modular forms of weight $1$. 
\end{theorem}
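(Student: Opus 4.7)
The plan is to extract from \eqref{main eqn: cpt} the natural $\Q$-linear equivalence
\begin{equation*}
K_{X} \: \sim \: (g+n+1)\pi^{*}L - \Delta_{X}
\end{equation*}
on $X$, where $\pi: X \to \AGcpt$ is the given morphism, and to combine it with Ueno's monotonicity of $\kappa(\XG)$ in $n$ \cite{Ue} and the Iitaka bound $\kappa(\XG) \leq g(g+1)/2$ of \cite{Ii} cited in the introduction.

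For part (1), by monotonicity in $n$ it suffices to prove $\kappa(\XG) \geq g(g+1)/2$ at the boundary value $n_{0} = k_{0}-g-1$, at which $g + n_{0} + 1 = k_{0}$. On a smooth projective model $X$ of $\mathcal{X}^{n_{0}}(\G)$, the isomorphism \eqref{main eqn: smooth proj cusp form} identifies $H^{0}(X, K_{X})$ with $\textit{S}_{k_{0}}(\G)$, and the ratio of two nonzero canonical forms on $X$ equals the $\pi$-pullback of the corresponding ratio of cusp forms on $\AG$. Hence the canonical map $\phi_{|K_{X}|}$ factors as $X \to \AG \dashrightarrow \proj^{N}$, the second arrow being the rational map given by $\textit{S}_{k_{0}}(\G)$. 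By hypothesis this second arrow is generically finite, so the image of $\phi_{|K_{X}|}$ has dimension $g(g+1)/2$, giving $\kappa(X) \geq g(g+1)/2$; combined with the Iitaka bound, equality follows.

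For part (2), the upper bound follows from the equivalence above: since $\Delta_{X}$ is effective, $\pi$ is surjective with connected fibers, and $L$ is big on $\AGcpt$, we have
\begin{equation*}
\kappa(K_{X}) \: \leq \: \kappa(K_{X}+\Delta_{X}) \: = \: \kappa(X, \pi^{*}L) \: = \: \kappa(\AGcpt, L) \: = \: g(g+1)/2.
\end{equation*}
For the lower bound, the third condition of Theorem \ref{main thm: full cpt} implies $\Delta_{X} = \pi^{-1}(\Delta_{A})$ as sets, and hence $\pi^{*}\Delta_{A} - \Delta_{X}$ is an effective divisor on $X$. Therefore
\begin{equation*}
K_{X} \: \sim \: (g+n+1)\pi^{*}L - \Delta_{X} \: \geq \: \pi^{*}\bigl((g+n+1)L-\Delta_{A}\bigr),
\end{equation*}
which yields $\kappa(K_{X}) \geq \kappa(\AGcpt, (g+n+1)L - \Delta_{A})$.

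The main obstacle is promoting the graded-ring isomorphism \eqref{main eqn: cpt} to the sheaf-level relation $K_{X} + \Delta_{X} \sim (g+n+1)\pi^{*}L$ used above. On the regular locus of $X$ this should follow from the naturality of the isomorphism in Theorem \ref{main thm: full cpt}, and the hypothesis that the singular locus has codimension $\geq 2$ in the normal variety $X$ lets one extend the identification globally by reflexivity.
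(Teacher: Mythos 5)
Part (1) is essentially the paper's argument (Proposition \ref{prop: canonical map} plus the monotonicity and Iitaka bound already cited in the introduction), so that part is fine.

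For part (2), there is a real gap. You assert the $\Q$-linear equivalence $K_{X} \sim (g+n+1)f^{\ast}L - \Delta_{X}$, but the paper establishes only a one-sided comparison. Proposition \ref{prop: sheaf version} constructs an \emph{injective} sheaf homomorphism $f^{\ast}L^{\otimes k} \hookrightarrow K_{X}^{\otimes m}(m\Delta_{X})$ for $k=(g+n+1)m$, which yields $mK_{X}\geq f^{\ast}(kL-m\Delta_{A})$, not an equality. The injection comes from Proposition \ref{prop: F=O(1) L2}(1), which bounds the pole order by $m$ but says nothing about it being \emph{exactly} $m$; the extended map can vanish along components of $\Delta_{X}$, so the two line bundles need not be isomorphic. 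Your suggested repair via reflexivity addresses extension across the codimension-$\geq 2$ singular locus, but the potential failure is along $\Delta_{X}$ itself, which is a divisor, so reflexivity does not close the gap. The graded-ring isomorphism \eqref{main eqn: cpt} is an isomorphism of \emph{global} sections, and by itself it does not force the sheaf-level equivalence you write (two divisors with $D_{1}\leq D_{2}$ can have $H^{0}(mD_{1})=H^{0}(mD_{2})$ for all $m$).

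Concretely, your lower bound only needs the inequality $mK_{X}\geq f^{\ast}(kL-m\Delta_{A})$, so it survives unchanged once you drop the claimed equivalence. Your upper bound as written uses the equivalence to pass from $\kappa(K_{X}+\Delta_{X})$ to $\kappa(f^{\ast}L)$; with only the one-sided inequality that step fails (it gives the wrong direction). The correct and in fact simpler route, which is what the paper does, is to use \eqref{main eqn: cpt} directly: the inclusion $H^{0}(X,K_{X}^{\otimes m'})\subset H^{0}(X, K_{X}^{\otimes m'}(m'\Delta_{X}))\simeq M_{m'(g+n+1)}(\Gamma)$ identifies the canonical ring of $X$ with a graded subring of the ring of modular forms, and hence $\kappa(X,K_{X})\leq \kappa({\AGcpt},(g+n+1)L)=g(g+1)/2$. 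So the structure of your argument is right, but the upper bound should be rerouted through the section-ring inclusion rather than through a $\Q$-linear equivalence that has not been, and need not be, established.
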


Here, for a ${\Q}$-divisor $D$ on a normal compact complex analytic variety $X$, 
we write $\kappa(D)=\kappa(X, D)$ for its Iitaka dimension. 
We have $\kappa(X)\leq \kappa(K_{X})$, where equality holds if $X$ has canonical singularities. 

By (1), $\kappa({\XG})$ stabilizes to $g(g+1)/2$ for large $n$. 
In general, it is not easy to explicitly find a weight $k_{0}$ as in (1), 
and also the resulting bound $n=k_{0}-g-1$ for $\kappa({\XG})=g(g+1)/2$ 
would be far from the actual bound as we are looking only at the canonical map. 
This is improved by (2), which reduces the problem to the study of slope of cusps forms (leaving aside the singularities). 
The behavior of $\kappa(kL-\Delta_{A})$ is relatively more tractable. 
As for the singularities, at least ${\XG}$ has canonical singularities in most cases (\S \ref{sec: singularities}).



This paper is organized as follows. 
\S \ref{sec: basic def} -- \S \ref{sec: toroidal} are recollection of 
Siegel modular varieties and toroidal compactification. 
In \S \ref{sec: boundary estimate} we prepare an asymptotic estimate of 
the Petersson norm of local modular forms. 
In \S \ref{sec: L^{2/m} criterion} we prepare a general $L^{2/m}$ criterion for log pluricanonical forms. 
In \S \ref{sec: proof 1.1}, which can be read after \S \ref{sec: basic def}, 
we prove Theorem \ref{main thm: interior}. 
In \S \ref{sec: proof 1.2} we prove Theorem \ref{main thm: full cpt}. 
In \S \ref{sec: Proof 1.3} we prove Theorem \ref{main thm: Kodaira dim}. 
In \S \ref{sec: singularities}, which is independent of other sections,  
we prove that ${\XG}$ has canonical singularities in most cases. 

I wish to thank Gavril Farkas for valuable comments 
which led me to study Theorem \ref{main thm: Kodaira dim}.


\section{Preliminaries}\label{sec: basic def} 

In this section we recall Siegel modular variety, Siegel modular forms, and Kuga family. 


\subsection{Universal marked family}\label{ssec: marked family}

Let $\Lambda$ be a free abelian group of rank $2g>2$ endowed with 
a nondegenerate symplectic form $(\cdot , \cdot )\colon \Lambda\times\Lambda\to{\Z}$. 
Let ${\rm LG}(\Lambda_{{\C}}) = {\rm LG}(g, \Lambda_{{\C}})$ 
be the Lagrangian Grassmannian parametrizing 
$g$-dimensional (= maximal) isotropic subspaces. 
The Hermitian symmetric domain attached to $\Lambda$ is 
the open subset of ${\rm LG}(\Lambda_{{\C}})$ defined by 
\begin{equation*}
{\D} = \{ \: [V]\in {\rm LG}(\Lambda_{{\C}}) \; | \; i(\cdot, \bar{\cdot})|_{V}>0 \: \}. 
\end{equation*}
Here $i(\cdot, \bar{\cdot})|_{V}>0$ means the condition that 
the Hermitian form $i(\cdot, \bar{\cdot})|_{V}$ on $V$ is positive definite. 
This ensures that $\Lambda_{{\C}}=V\oplus \bar{V}$ for $[V]\in{\D}$. 

Let $\underline{\Lambda_{{\C}}}={\D}\times\Lambda_{{\C}}$ be the product vector bundle over ${\D}$, 
$E\to{\D}$ the universal sub bundle of $\underline{\Lambda_{{\C}}}$ 
whose fiber over $[V]\in{\D}$ is $V\subset\Lambda_{{\C}}$, 
and $F=\underline{\Lambda_{{\C}}}/E$ the universal quotient bundle. 
The symplectic pairing defines a canonical isomorphism $F\simeq E^{\vee}$. 
The tangent bundle of ${\D}$ is canonically isomorphic to ${\rm Sym}^{2}F$. 

The local system $\underline{\Lambda}={\D}\times\Lambda$ inside $\underline{\Lambda_{{\C}}}$ induces 
a local system of sections of $F\to{\D}$. 
The universal family of abelian varieties over ${\D}$ is defined by 
\begin{equation*}
{\X} = F/\underline{\Lambda} = 
\underline{\Lambda_{{\C}}}/(E+\underline{\Lambda}) 
\simeq E^{\vee}/\underline{\Lambda}. 
\end{equation*}
Here the last isomorphism is defined by the symplectic pairing. 
The fiber of the projection $f\colon {\X}\to{\D}$ over $[V]\in{\D}$ is the abelian variety 
\begin{equation*}
A = \Lambda_{{\C}}/(V+\Lambda)  \simeq V^{\vee}/\Lambda, 
\end{equation*}
polarized by the symplectic form on $\Lambda\simeq H_1(A, {\Z})$. 
We can naturally identify $H^0(\Omega_{A}^{1})=V$. 
Hence if $\Omega_{f}^{1}$ is the relative cotangent bundle of $f$, 
we have a canonical isomorphism 
$f_{\ast}\Omega_{f}^{1} \simeq E$. 

Let $L=\det E$. 
This is restriction of 
the tautological line bundle $\mathcal{O}(-1)$ over ${\mathbb P}(\bigwedge^{g}\Lambda_{{\C}})$ 
by the Pl\"ucker embedding 
${\D} \subset {\LG} \hookrightarrow {\mathbb P}(\bigwedge^{g}\Lambda_{{\C}})$. 
The fiber of $L$ over $[V]\in{\D}$ is $\det V = H^0(K_A)$. 
Hence if $K_{f}=\det \Omega_{f}^{1}$ is the relative canonical bundle of $f$, 
we have a natural isomorphism 
$f_{\ast}K_{f} \simeq L$. 
Since $K_{f}|_{A}\simeq \mathcal{O}_{A}$ for every fiber $A$ of $f$, 
the natural homomorphism 
$f^{\ast}f_{\ast}K_{f} \to K_{f}$ is isomorphic. 
Therefore 
$K_{f} \simeq f^{\ast}L$. 
On the other hand, 
taking determinant of $\Omega_{{\D}}^{1}\simeq {\sym}E$,  
we also have an ${\rm Sp}(\Lambda_{{\R}})$-equivariant isomorphism  
\begin{equation*}\label{eqn: K=Lg+1 domain}
K_{{\D}} \simeq \det ({\rm Sym}^{2} E) \simeq L^{\otimes g+1}. 
\end{equation*}

For a natural number $n$ we take the $n$-fold self fiber product  
\begin{equation*}
\mathcal{X}^{(n)} = 
{\X}\times_{{\D}} \cdots \times_{{\D}} {\X} \simeq 
F^{\oplus n}/\underline{\Lambda}^{\oplus n}  
\end{equation*}
and let $f_{n}\colon {\Xn}\to{\D}$ be the projection. 
Since $L_{[V]}^{\otimes n} \simeq H^{0}(K_{A})^{\otimes n} \simeq H^{0}(K_{A^{n}})$ 
for $A=V^{\vee}/\Lambda$, 
we have $(f_{n})_{\ast}K_{f_{n}}\simeq L^{\otimes n}$ and 
\begin{equation*}\label{eqn:Kfn=fnLn D}
K_{f_{n}} \simeq f_{n}^{\ast}L^{\otimes n}. 
\end{equation*}


\subsection{Quotient by ${\G}$}\label{ssec: quotient}

Let ${\G}$ be a finite-index subgroup of the symplectic group ${\Sp}$ of $\Lambda$. 
The quotient space 
${\AG}={\D}/{\G}$ 
is the Siegel modular variety defined by ${\G}$. 
By Baily-Borel \cite{BB}, ${\AG}$ has the structure of 
a normal quasi-projective variety of dimension $g(g+1)/2$. 

The group ${\G}$ acts on the vector bundle $\underline{\Lambda_{{\C}}}$ equivariantly. 
This preserves $E$ and $\underline{\Lambda}$, 
and thus ${\G}$ acts on ${\Xn}$. 
The quotient space 
\begin{equation*}
{\XG} = {\Xn}/{\G} \simeq F^{\oplus n}/(\Lambda^{\oplus n}\rtimes {\G}) 
\end{equation*}
is called the $n$-fold \textit{Kuga family}. 
This is a normal quasi-projective variety of dimension $g(g+2n+1)/2$ 
fibered over ${\AG}$. 
Here the quasi-projectivity follows from 
Mumford's GIT construction for the case ${\G}={\Sp}$ (\cite{GIT} Chapter 7, \S 2 -- \S 3) 
and Grothendieck's Riemann existence theorem (\cite{Harts} p.~442). 
For $n=1$ and some torsion-free ${\G}$, 
Shimura \cite{Shimura} constructed a projective embedding of $X^{1}({\G})$ 
using theta functions. 
In a special case in $g=2$, its defining equation is determined in \cite{Gu}. 
General members of the fibration 
${\XG}\to{\AG}$ 
are abelian varieties when $-1\not\in{\G}$, 
and Kummer varieties when $-1\in{\G}$. 
We do not exclude the Kummer case. 

The group ${\G}$ acts on the line bundle $L$ equivariantly. 
A ${\G}$-invariant section of $L^{\otimes k}$ is called 
a \textit{Siegel modular form} of weight $k$ with respect to ${\G}$. 
We write $M_{k}({\G})$ for the space of them. 
We do not need to impose cusp condition by the Koecher principle (see, e.g., \cite{Fre}).


\subsection{Petersson metric}\label{ssec:Petersson metric}


We fix an isomorphism $\det \Lambda \simeq {\Z}$. 
Let $[V]\in{\D}$. 
For two vectors 
$\omega=v_{1}\wedge \cdots \wedge v_{g}$,  
$\eta=w_{1}\wedge \cdots \wedge w_{g}$ 
of $L_{[V]}=\det V$, the wedge product 
\begin{equation}\label{eqn: wedge}
\omega \wedge \bar{\eta} = 
v_{1}\wedge \cdots \wedge v_{g} \wedge \bar{w}_{1}\wedge \cdots \wedge \bar{w}_{g}
\end{equation}
is a vector of $\det \Lambda_{{\C}}$. 
We define the inner product of $\omega$ and $\eta$ to be 
the image of $i^{g^{2}} \omega \wedge \bar{\eta}$ in $\det \Lambda_{{\C}}\simeq {\C}$. 
This defines a Hermitian metric on the line bundle $L$, 
which is ${\rm Sp}(\Lambda_{{\R}})$-invariant by construction. 
Its $k$-th power defines an ${\rm Sp}(\Lambda_{{\R}})$-invariant Hermitian metric on 
$L^{\otimes k}$ which we denote by $( \: , \: )_{k}$. 
%

Geometrically, $( \: , \: )_{k}$ is the Hodge metric for 
$\mathcal{X}^{(k)}\to{\D}$. 

\begin{lemma}\label{lem: weight 1 Petersson geometric}
Let $A=V^{\vee}/\Lambda$ be the abelian variety over $[V]\in{\D}$. 
We identify $\omega, \eta \in L_{[V]}^{\otimes k}$ with 
canonical forms on $A^{k}$ by the natural isomorphism 
$L_{[V]}^{\otimes k} \simeq H^{0}(K_{A^{k}})$. 
Then we have 
\begin{equation*}
(\omega, \eta)_{k} = i^{g^{2}k} \int_{A^{k}}\omega\wedge \bar{\eta}. 
\end{equation*}
\end{lemma}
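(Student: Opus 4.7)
Both sides of the claimed identity are sesquilinear pairings on the one-dimensional complex line $L_{[V]}^{\otimes k}$, so it suffices to verify the equality on one nonzero element. My strategy is to reduce to the case $k=1$ and there express both sides intrinsically in terms of the decomposition $\Lambda_{\C} = V \oplus \bar V$.

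For the reduction, under the canonical isomorphism $L_{[V]}^{\otimes k} \simeq H^{0}(K_{A^{k}})$ a tensor $\omega_{0}^{\otimes k}$ with $\omega_{0} \in L_{[V]} = \det V$ corresponds to $p_{1}^{\ast}\omega_{0} \wedge \cdots \wedge p_{k}^{\ast}\omega_{0}$ on $A^{k}$, where $p_{i} \colon A^{k} \to A$ is the $i$-th projection. After permuting the factors of $(p_{1}^{\ast}\omega_{0} \wedge \cdots \wedge p_{k}^{\ast}\omega_{0}) \wedge \overline{(p_{1}^{\ast}\eta_{0} \wedge \cdots \wedge p_{k}^{\ast}\eta_{0})}$ into the grouped form $\bigwedge_{i=1}^{k} p_{i}^{\ast}(\omega_{0} \wedge \bar{\eta}_{0})$, Fubini's theorem expresses $\int_{A^{k}}\omega \wedge \bar{\eta}$ as a definite sign (depending only on $g,k$) times $(\int_{A}\omega_{0} \wedge \bar{\eta}_{0})^{k}$. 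Since the Hermitian metric on $L^{\otimes k}$ is by definition the $k$-th tensor power of the one on $L$, the $k=1$ case yields the general case once this permutation sign is reconciled with the factor $i^{g^{2}k}$.

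For $k=1$, the main geometric input is that the translation-invariant $(g,g)$-form $\omega \wedge \bar{\eta}$ on $A = V^{\vee}/\Lambda$ has integral equal to its value on the fundamental parallelepiped $\lambda_{1} \wedge \cdots \wedge \lambda_{2g}$ of any oriented basis of $\Lambda$. Under the direct sum decomposition $\Lambda_{\C} = V \oplus \bar{V}$, the natural map $\det V \otimes \det \bar{V} \to \det \Lambda_{\C}$ is an isomorphism, so $\omega \wedge \bar{\eta}$ may be regarded as an element of $\det \Lambda_{\C}$. Its image in $\C$ under the fixed isomorphism $\det \Lambda \simeq \Z$ then equals $\int_{A}\omega \wedge \bar{\eta}$, and multiplying by $i^{g^{2}}$ recovers $(\omega,\eta)_{1}$ by definition.

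The main obstacle will be careful bookkeeping of signs and orientations: one must confirm that the factor $i^{g^{2}k}$ together with the fixed orientation of $\Lambda$ used in $\det \Lambda \simeq \Z$ conspires to yield a positive-definite Hermitian form on both sides, and to match the permutation sign produced by the reduction to $k=1$. A convenient explicit check is to choose a symplectic basis $e_{i}, f_{j}$ of $\Lambda$ and parameterize $V$ by its period matrix $\tau$ in the Siegel upper half space; then a direct calculation shows that both sides reduce to the same universal constant times $\det(\mathrm{Im}\,\tau)^{k}$, confirming the identity.
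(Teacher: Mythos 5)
Your proof takes essentially the same route as the paper: for $k=1$ identify the wedge in $\det\Lambda_{\C}$ with the $(g,g)$-form via $\det\Lambda_{\C}\simeq H^{2g}(A,\C)$ and note that the fixed isomorphism $\det\Lambda_{\C}\simeq\C$ is exactly the integration map $\int_{A}$, then reduce $k>1$ to $k=1$ by Fubini. The paper's own proof is a two-sentence version of this (``iterated integral'').

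Your worry about the permutation sign is in fact well founded, and this is the one place where both your sketch and the paper are glossing over something. Reordering
\[
(p_{1}^{\ast}\omega_{0}\wedge\cdots\wedge p_{k}^{\ast}\omega_{0})\wedge(\overline{p_{1}^{\ast}\eta_{0}}\wedge\cdots\wedge\overline{p_{k}^{\ast}\eta_{0}})
\]
into $\bigwedge_{i} p_{i}^{\ast}(\omega_{0}\wedge\bar\eta_{0})$ produces the sign $(-1)^{g^{2}k(k-1)/2}$, since each $\overline{p_{i}^{\ast}\eta_{0}}$ of degree $g$ must cross $k-i$ forms of degree $g$. Combining with $(i^{g^{2}})^{k}=i^{g^{2}k}$ one finds $(\omega,\eta)_{k}=i^{g^{2}k}(-1)^{g^{2}k(k-1)/2}\int_{A^{k}}\omega\wedge\bar\eta = i^{(gk)^{2}}\int_{A^{k}}\omega\wedge\bar\eta$, so the exponent should be $(gk)^{2}$ rather than $g^{2}k$ (these differ when $g$ is odd and $k\equiv 2,3\pmod 4$; e.g.\ $g=1$, $k=2$ gives opposite signs). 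Your final sentence, that a direct check ``confirms the identity,'' is therefore a bit too optimistic: the check would actually reveal this sign. None of this affects anything downstream, since the paper only ever uses $(\,\cdot\,,\cdot\,)_{k}$ as a positive Hermitian metric (in particular $(F,F)_{k}^{1/m}$ in Propositions \ref{prop: asymptotic Petersson norm} and \ref{prop: L2 criterion modular}), and the corrected constant is still a positive multiple of what is written.
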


\begin{proof}
When $k=1$, the wedge product \eqref{eqn: wedge} corresponds to 
the $(g, g)$ form $\omega\wedge\bar{\eta}$ on $A$ 
via the isomorphism $\det \Lambda_{{\C}}\simeq H^{2g}(A, {\C})$. 
The isomorphism $\det \Lambda_{{\C}}\simeq {\C}$ coincides with the integration map 
$\int_{A}: H^{2g}(A, {\C}) \to {\C}$. 
When $k>1$, writing $\omega, \eta$ as 
$\omega=\omega_{0}^{\otimes k}$ and $\eta=\eta_{0}^{\otimes k}$ 
with $\omega_{0}, \eta_{0} \in L_{[V]}=H^{0}(K_{A})$, 
we are reduced to the case $k=1$ by iterated integral. 
\end{proof}

We next define an invariant volume form. 
Via the isomorphism $K_{\mathcal{D}}\simeq L^{\otimes g+1}$, 
the metric $( \: , \: )_{g+1}$ induces a Hermitian metric on $K_{\mathcal{D}}$. 
For each $[V]\in{\D}$, we choose a vector $\omega\ne0\in (K_{\mathcal{D}})_{[V]}$ and define 
\begin{equation*}
({\volD})_{[V]} = 
i^{N^{2}}\frac{\omega \wedge \bar{\omega}}{(\omega, \omega)_{g+1}} 
\end{equation*}
where $N = g(g+1)/2$. 
This does not depend on the choice of $\omega$ and defines a volume form on ${\D}$, 
which is ${\rm Sp}(\Lambda_{{\R}})$-invariant by the invariance of $( \: , \: )_{g+1}$. 
If $\omega, \eta$ are two local sections of $K_{\mathcal{D}}$ over some subset of ${\D}$, 
then  
\begin{equation}\label{lem: weight g+1 Petersson geometric}
(\omega, \eta)_{g+1}{\volD} = 
i^{N^{2}}\omega\wedge\bar{\eta}. 
\end{equation}



\subsection{Siegel upper half space}\label{ssec: factor of automorphy}

The traditional style defining Siegel modular forms as functions on 
the Siegel upper half space can be realized if we pick up a $0$-dimensional cusp of ${\D}$.  
%
Let $J$ be a maximal isotropic sublattice of $\Lambda$. 
We choose a maximal isotropic subspace $J'_{{\Q}}$ of $\Lambda_{{\Q}}$ 
such that $\Lambda_{{\Q}}=J_{{\Q}}\oplus J_{{\Q}}'$. 
(The role of $J'_{{\Q}}$ will be auxiliary.) 
We can identify 
$J_{{\Q}}'\simeq J_{{\Q}}^{\vee}$ and 
$(J_{{\Q}}')^{\vee}\simeq J_{{\Q}}$ by the symplectic pairing. 
$J$ determines the Zariski open set 
\begin{equation*}
H_{J} = \{ \: [V]\in {\LG} \: | \: V\cap J_{{\C}} = \{ 0 \} \: \}
\end{equation*}
of ${\LG}$. 
The choice of $J'_{{\Q}}$ then induces an isomorphism 
$H_{J}\simeq {\rm Sym}^{2}J_{{\C}}$ 
by associating to an element of 
${\rm Sym}^{2}J_{{\C}}\subset J_{{\C}}\otimes J_{{\C}} \simeq {\hom}(J'_{{\C}}, J_{{\C}})$ 
its graph in $J'_{{\C}}\oplus J_{{\C}}=\Lambda_{{\C}}$. 
(Symmetricity corresponds to isotropicity of the graph.) 
The domain ${\D}$ is contained in $H_{J}$, and its image by $H_{J}\to {\rm Sym}^{2}J_{{\C}}$ is 
\begin{equation*}
\mathfrak{H}_{J} = \{ \: \Omega \in {\rm Sym}^{2}J_{{\C}} \: | \: {\rm Im} \, \Omega >0 \: \}. 
\end{equation*}
This is realization of ${\D}$ as a Siegel upper half space. 
If we change $J_{{\Q}}'$, 
the isomorphism $H_{J}\to {\rm Sym}^{2}J_{{\C}}$ is shifted by translation. 

We choose an orientation of $J$. 
This determines a generator of $\bigwedge^{g}J\simeq{\Z}$ which we denote by $\det J$. 
Then we can define a nowhere vanishing section $s_J$ of the line bundle $L$ by the condition 
\begin{equation*}\label{eqn: def sJ}
(s_J([V]), \: \det J) =1, \qquad [V]\in{\D}. 
\end{equation*}
Here $(\: , \: )$ is the paring between 
$L_{[V]}=\det V \subset \bigwedge^{g} \Lambda_{{\C}}$ and 
$\det J \in \bigwedge^{g} \Lambda_{{\C}}$ 
induced from the symplectic form on $\Lambda_{{\C}}$. 
The factor of automorphy associated to the frame $s_J$ is given by 
\begin{equation*}
j(\gamma, [V]) = 
\frac{\gamma( s_J([V]) )}{s_J([\gamma V])} = 
(\gamma (s_J([V])), \: \det J). 
\end{equation*}
Via the trivialization of $L^{\otimes k}$ by $s_J^{\otimes k}$, 
Siegel modular forms of weight $k$ are identified with holomorphic functions $F$ on ${\D}$ satisfying 
\begin{equation*}
F([\gamma V]) = j(\gamma, [V])^k F([V]), \qquad \gamma\in \Gamma, \: \: [V] \in{\D}. 
\end{equation*}

Over $\mathfrak{H}_{J}$, $j(\gamma, [V])$ takes the classical form as follows. 
Choose a basis $l_1, \cdots, l_g$ of $J$ of positive orientation, 
and let $m_1, \cdots, m_g\in J'_{{\Q}}$ be its dual basis, 
namely $(m_i, l_j)=\delta_{ij}$. 
For $[V]\in{\D}$ we can take the basis $\omega_1, \cdots, \omega_g$ of $V$ 
such that $(\omega_i, l_j)=\delta_{ij}$ (normalized basis). 
If we write the matrix expression of $(\omega_1, \cdots, \omega_g)$ 
with respect to $(l_i)_i, (m_j)_j$ in the form ${}^t(\Omega \: I_g)$, 
then $\Omega$ is the symmetric matrix representing 
the image of $[V]$ in ${\rm Sym}^{2}J_{{\C}}$ with respect to $(l_i)_i$. 
Let 
$\begin{pmatrix} A & B \\ C & D \end{pmatrix}$ 
be the matrix representation of $\gamma$ with respect to $(l_i)_i, (m_j)_j$. 
Since 
$s_J([V]) = \omega_1\wedge \cdots \wedge \omega_g$, 
we have 
\begin{equation*}\label{eqn: factor of auto coordinate}
j(\gamma, [V]) 
 = 
(\gamma\omega_1\wedge \cdots \wedge \gamma\omega_g, \: l_1\wedge \cdots \wedge l_g)  
 =  
\det (C\Omega+D).  
\end{equation*}

We also calculate the Petersson metric on $L$ over $\mathfrak{H}_{J}$. 

\begin{lemma}\label{lem: Petersson classical}
Let $\Omega$ be the matrix expression of the image of $[V]\in{\D}$ in $\mathfrak{H}_{J}$. 
Then we have 
\begin{equation*}
(s_{J}([V]),  s_{J}([V]))_{1} = \det ({\rm Im} \, \Omega ) 
\end{equation*}
up to a constant independent of $V$. 
\end{lemma}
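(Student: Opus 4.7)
The plan is to reduce the claim to a direct block-matrix computation using the normalized basis $\omega_1,\dots,\omega_g$ of $V$ introduced right before the lemma. Since the Petersson metric on $L$ was defined by
\[
(s_{J}([V]), s_{J}([V]))_{1} \;=\; i^{g^{2}} s_{J}([V]) \wedge \overline{s_{J}([V])}
\]
viewed in $\det \Lambda_{\mathbb{C}} \simeq \mathbb{C}$, and since $s_{J}([V]) = \omega_{1}\wedge\cdots\wedge\omega_{g}$, the task is to evaluate
\[
\omega_{1}\wedge\cdots\wedge\omega_{g}\wedge \bar\omega_{1}\wedge\cdots\wedge\bar\omega_{g}
\]
in $\det\Lambda_{\mathbb{C}}$.

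First I would expand $\omega_i$ in the basis $(l_{1},\dots,l_{g},m_{1},\dots,m_{g})$ of $\Lambda_{\mathbb{Q}}$. From the matrix expression ${}^{t}(\Omega \; I_{g})$ one reads $\omega_{i} = \sum_{j}\Omega_{ij}l_{j} + m_{i}$. Then the $2g\times 2g$ transition matrix from $(\omega_{1},\dots,\omega_{g},\bar\omega_{1},\dots,\bar\omega_{g})$ to $(l_{1},\dots,l_{g},m_{1},\dots,m_{g})$ is the block matrix
\[
M \;=\; \begin{pmatrix} \Omega & I_{g} \\ \bar\Omega & I_{g} \end{pmatrix},
\]
so that
\[
\omega_{1}\wedge\cdots\wedge\omega_{g}\wedge\bar\omega_{1}\wedge\cdots\wedge\bar\omega_{g} \;=\; (\det M)\, l_{1}\wedge\cdots\wedge l_{g}\wedge m_{1}\wedge\cdots\wedge m_{g}.
\]
The second step is to compute $\det M$ by row-reducing the lower block against the upper block (or using the standard block formula), giving
\[
\det M \;=\; \det(\Omega - \bar\Omega) \;=\; (2i)^{g}\,\det(\mathrm{Im}\,\Omega).
\]

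Third, I would note that the vector $l_{1}\wedge\cdots\wedge l_{g}\wedge m_{1}\wedge\cdots\wedge m_{g} \in \det \Lambda_{\mathbb{C}}$ depends only on the fixed choices of $J$, $J'_{\mathbb{Q}}$, and the oriented basis $(l_i)$, not on $V$; hence its image under $\det\Lambda_{\mathbb{C}}\simeq\mathbb{C}$ is a nonzero constant $c$. Combining,
\[
(s_{J}([V]), s_{J}([V]))_{1} \;=\; i^{g^{2}}(2i)^{g}\, c\,\det(\mathrm{Im}\,\Omega),
\]
which is $\det(\mathrm{Im}\,\Omega)$ up to a constant independent of $V$, as asserted. (One may check in passing that $i^{g^{2}}(2i)^{g}c$ is real and positive, consistent with the Hermitian metric being positive definite on $\mathcal{D}$, but this is not needed for the statement.)

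There is no real obstacle here: the only thing to watch is the bookkeeping of the block-matrix determinant and the fact that the constant $c$ arising from expressing $l_{1}\wedge\cdots\wedge l_{g}\wedge m_{1}\wedge\cdots\wedge m_{g}$ via the fixed generator of $\det\Lambda$ is independent of $[V]$. The argument also makes transparent why changing $J'_{\mathbb{Q}}$ (which only translates the identification $H_{J}\simeq \mathrm{Sym}^{2}J_{\mathbb{C}}$) does not affect $\mathrm{Im}\,\Omega$, so the formula is well defined.
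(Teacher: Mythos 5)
Your argument is correct and is essentially the same as the paper's: both reduce to computing the determinant of the $2g\times 2g$ block matrix expressing $\omega_{1}\wedge\cdots\wedge\omega_{g}\wedge\bar\omega_{1}\wedge\cdots\wedge\bar\omega_{g}$ in terms of the fixed volume vector $l_{1}\wedge\cdots\wedge l_{g}\wedge m_{1}\wedge\cdots\wedge m_{g}$, then perform the same block elimination to get $(2i)^{g}\det(\mathrm{Im}\,\Omega)$. Your extra remarks on the $V$-independence of the constant $c$ and on the effect of changing $J'_{\mathbb{Q}}$ are correct and harmless additions.
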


\begin{proof}
We use the notation above. 
Since 
$s_{J}([V])=\omega_{1}\wedge \cdots \wedge \omega_{g}$ 
and 
\begin{equation*}
 \omega_{1}\wedge \cdots \wedge \omega_{g} \wedge 
\bar{\omega}_{1} \wedge \cdots \wedge \bar{\omega}_{g}  
 =  
\det \begin{pmatrix} \Omega & \bar{\Omega} \\ I_{g} & I_{g} \end{pmatrix} 
l_{1}\wedge \cdots \wedge l_{g} \wedge 
m_{1} \wedge \cdots \wedge m_{g}, 
\end{equation*}
then $(s_{J}([V]),  s_{J}([V]))_{1}$ equals to a constant multiple of 
\begin{equation*}
\det \begin{pmatrix} \Omega & \bar{\Omega} \\ I_{g} & I_{g} \end{pmatrix} = 
\det \begin{pmatrix} \Omega-\bar{\Omega} & \bar{\Omega} \\ O & I_{g} \end{pmatrix}     
 =  (2i)^{g} \det ({\rm Im} \, \Omega). 
\end{equation*}
\end{proof}


Finally, we express ${\volD}$ 
in terms of the flat volume form on ${\rm Sym}^{2}J_{{\C}}$. 
This recovers the classical form of Petersson inner product. 

\begin{lemma}\label{lem: volD classical}
Let ${\rm vol}_{J}$ be a flat volume form on ${\rm Sym}^{2}J_{{\C}}$. 
Under the isomorphism ${\D}\simeq \mathfrak{H}_{J}$ we have up to a constant 
\begin{equation*}
{\volD} = \frac{1}{\det ({\rm Im} \, \Omega)^{g+1}} {\rm vol}_{J}. 
\end{equation*}
\end{lemma}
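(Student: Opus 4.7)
\emph{Proof plan.} The plan is to evaluate both sides of \eqref{lem: weight g+1 Petersson geometric} applied to $\omega=\eta=s_{J}^{\otimes (g+1)}$, viewed as a local section of $K_{\D}$ via the canonical isomorphism $K_{\D}\simeq L^{\otimes (g+1)}$. The Petersson norm is immediate from multiplicativity and Lemma \ref{lem: Petersson classical}: $(s_{J}^{\otimes (g+1)}, s_{J}^{\otimes (g+1)})_{g+1}=(s_{J},s_{J})_{1}^{g+1}=\det({\rm Im}\,\Omega)^{g+1}$ up to a nonzero constant. The substantive step is to show that, under $K_{\D}\simeq L^{\otimes(g+1)}$ and the identification $\D\simeq \mathfrak{H}_{J}$, the section $s_{J}^{\otimes(g+1)}$ agrees up to a universal constant with the flat top form $\bigwedge_{i\leq j}d\Omega_{ij}$.

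To establish this identification I would work pointwise at $[V]\in\D$ using the normalized basis $\omega_{1},\ldots,\omega_{g}$ of $V$, for which $\omega_{i}=\sum_{k}\Omega_{ik}l_{k}+m_{i}$. Differentiating this expression in $\Omega_{jk}$ (for $j\leq k$) and reducing modulo $V$ identifies the tangent vector $\partial/\partial\Omega_{jk}\in T_{\D,[V]}\simeq {\rm Sym}^{2}E_{[V]}^{\vee}$ with a numerical multiple of the symmetric product $\omega_{j}^{\vee}\omega_{k}^{\vee}$, using that $l_{j}\bmod V\in F_{[V]}\simeq E_{[V]}^{\vee}$ corresponds via the symplectic pairing to the dual basis vector $\omega_{j}^{\vee}$. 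Dually, $d\Omega_{jk}\in\Omega^{1}_{\D,[V]}\simeq {\rm Sym}^{2}V$ is a constant multiple of $\omega_{j}\omega_{k}$, so $\bigwedge_{i\leq j}d\Omega_{ij}\in\det({\rm Sym}^{2}V)$ becomes a constant multiple of $(\omega_{1}\wedge\cdots\wedge\omega_{g})^{\otimes(g+1)}=s_{J}([V])^{\otimes(g+1)}$ under the canonical isomorphism $\det({\rm Sym}^{2}V)\simeq (\det V)^{\otimes(g+1)}$. The combinatorial constants are independent of $\Omega$ by ${\rm Sp}(\Lambda_{\R})$-equivariance.

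Substituting both inputs into \eqref{lem: weight g+1 Petersson geometric} gives
\[
\det({\rm Im}\,\Omega)^{g+1}\volD \; = \; c \cdot i^{N^{2}}\bigwedge_{i\leq j}d\Omega_{ij}\wedge \bigwedge_{i\leq j}d\bar{\Omega}_{ij}
\]
for a universal constant $c$, and the right-hand side is a constant multiple of ${\rm vol}_{J}$ by definition of a flat volume form on ${\rm Sym}^{2}J_{{\C}}$. Rearranging yields the stated formula. The main obstacle is the pointwise identification in the previous paragraph, which forces one to unwind $T_{\D}\simeq {\rm Sym}^{2}F\simeq {\rm Sym}^{2}E^{\vee}$ in Siegel coordinates; once that is done, the rest is direct substitution.
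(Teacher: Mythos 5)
Your proposal is correct and reaches the statement, but it takes a more computational route than the paper. The paper's proof consists of a single observation: the canonical form $\omega_{J}$ corresponding to $s_{J}^{\otimes(g+1)}$ under $K_{\D}\simeq L^{\otimes(g+1)}$ extends to a translation-invariant holomorphic top form on ${\rm Sym}^{2}J_{\C}$ (this follows because $s_{J}$, being a period against $\det J$, is invariant under the unipotent group $U(J)_{\R}$ of real translations $\Omega\mapsto\Omega+B$, and a holomorphic form invariant under real translations in each coordinate is constant), hence $\omega_{J}=dz_{1}\wedge\cdots\wedge dz_{N}$ up to a constant; the lemma then drops out of \eqref{lem: weight g+1 Petersson geometric} together with Lemma \ref{lem: Petersson classical}. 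You instead unwind $T_{\D}\simeq{\rm Sym}^{2}F$ explicitly in Siegel coordinates to identify $d\Omega_{jk}$ with a multiple of $\omega_{j}\omega_{k}$; that computation is valid and perhaps more self-contained, since it does not require recalling that $j(T_{l,l'},[V])\equiv 1$. One small imprecision: your final appeal to ``${\rm Sp}(\Lambda_{\R})$-equivariance'' for the $\Omega$-independence of the constants is not quite right as stated, since the full group does not preserve $J$ or the frame $s_{J}$. What you actually want is either to invoke only the subgroup $\Gamma(J)_{\R}$ (or just $U(J)_{\R}$, which is the paper's implicit argument), or — more in the spirit of your computation — to note that the constants you produce (such as the factor $2$ for $j\neq k$ arising from the dual basis of a symmetric power) are purely combinatorial and manifestly independent of $\Omega$, so no equivariance argument is needed at all.
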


\begin{proof}
The canonical form $\omega_{J}$ on ${\D}$ corresponding to the section 
$s_{J}^{\otimes g+1}$ of $L^{\otimes g+1}$  
extends to a translation-invariant canonical form on ${\rm Sym}^{2}J_{{\C}}$. 
Hence $\omega_{J}=dz_{1}\wedge \cdots \wedge dz_{N}$ for 
some coordinate $z_{1}, \cdots , z_{N}$ on ${\rm Sym}^{2}J_{{\C}}$. 
Then  
\begin{equation*}
{\volD} 
= i^{N^{2}}\frac{\omega_{J}\wedge\bar{\omega}_{J}}{(\omega_{J}, \omega_{J})_{g+1}}  
= \frac{{\rm vol}_{J}}{(s_{J}, s_{J})_{g+1}} 
= \frac{{\rm vol}_{J}}{\det ({\rm Im} \, \Omega)^{g+1}}. 
\end{equation*}
\end{proof}


\section{Siegel domain realization}\label{sec: Siegel domain} 

In this section we recall the Siegel domain realization (of the third kind) of ${\D}$ 
associated to each cusp. 
We give a self-contained description, following the style of \cite{Lo},  
that is more explicit than the general theory, 
and that does not depend on coordinates so as to be suitable for dealing with general ${\G}$ other than ${\rm Sp}(2g, {\Z})$. 
%
%

We fix a primitive isotropic sublattice $I$ of $\Lambda$, say of rank $g'$, 
which corresponds to a cusp of ${\D}$. 
We set $g''=g-g'$. 
We write ${\LI}=I^{\perp}/I$, which is a nondegenerate symplectic lattice of rank $2g''$.


\subsection{Structure of the stabilizer}\label{ssec: stabilizer}

Let ${\GIQ}$ be the stabilizer of $I_{{\Q}}$ in ${\SpQ}$. 
%
%
We define ${\UIQ}\lhd {\WIQ}\lhd {\GIQ}$ by 
\begin{equation*}
{\UIQ} = {\ker}({\GIQ} \to {\rm GL}(I^{\perp}_{{\Q}})), 
\end{equation*}
\begin{equation*}
{\WIQ} = {\ker}({\GIQ} \to {\SpLIQ}\times{\GLIQ}), 
\end{equation*}
and put ${\VIQ}={\WIQ}/{\UIQ}$.  
The canonical exact sequence 
\begin{equation}\label{eqn: structure G(I)Q}
1 \to {\WIQ} \to {\GIQ} \to {\SpLIQ}\times{\GLIQ} \to 1 
\end{equation}
splits (non-canonically) if we \textit{choose} 
an isotropic subspace $I_{{\Q}}'$ of $\Lambda_{{\Q}}$ 
with $\Lambda_{{\Q}}=I_{{\Q}}^{\perp}\oplus I_{{\Q}}'$   
and let ${\SpLIQ}$ act on $(I_{{\Q}}\oplus I'_{{\Q}})^{\perp} \simeq {\LIQ}$ and 
${\GLIQ}$ act on $I_{{\Q}}\oplus I_{{\Q}}' \simeq I_{{\Q}}\oplus I_{{\Q}}^{\vee}$. 

Elements of ${\WIQ}$ can be described as follows. 
For $m\in I^{\perp}_{{\Q}}$ and $l\in I_{{\Q}}$ 
we define $T_{m,l}\in {\rm Sp}(\Lambda_{{\Q}})$ by 
\begin{equation*}\label{eqn: def transvection}
T_{m,l}(v) = v + (m, v)l + (l, v)m, \qquad v\in \Lambda_{{\Q}}. 
\end{equation*}
Then $T_{m,l} \in {\WIQ}$. 
The following relations hold: 

\begin{enumerate} 
\item $T_{\alpha m, l}=T_{m,\alpha l}$ for $\alpha\in{\Q}$. 

\item $T_{m,l}\circ T_{m,l'}=T_{m,l+l'}$. 

\item $T_{m,l}\circ T_{m',l} = 
T_{l,\alpha l}\circ T_{m+m',l}$ 
where 
$\alpha=(m, m')/2$. 

\item $T_{l,l'}=T_{l',l}$ if $l, l'\in I_{{\Q}}$. 
\end{enumerate}



\begin{lemma}\label{prop: structure WIQ}
(1) The group ${\WIQ}$ is generated by the elements $T_{m,l}$. 

(2) We have the canonical isomorphisms 
\begin{equation*}
{\rm Sym}^{2}I_{{\Q}} \simeq {\UIQ}, \qquad l\cdot l'\mapsto T_{l, l'}, 
\end{equation*}
\begin{equation*}
{\LIQ}\otimes I_{{\Q}} \simeq {\VIQ}, \qquad m\otimes l \mapsto [T_{\tilde{m},l}], 
\end{equation*}
where $\tilde{m}\in I^{\perp}_{{\Q}}$ is a lift of $m\in{\LIQ}$. 
In particular, ${\UIQ}$ and ${\VIQ}$ are ${\Q}$-vector spaces. 
\end{lemma}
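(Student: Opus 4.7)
The plan is to extract linear invariants from each element of $\WIQ$ and $\UIQ$ and verify that these invariants are realized exactly by (products of) transvections $T_{m,l}$. First I verify directly that $T_{m,l} \in \WIQ$ for $m \in I^{\perp}_{\Q}$ and $l \in I_{\Q}$, and that $T_{l,l'} \in \UIQ$ when $l, l' \in I_{\Q}$; in both cases the check reduces to evaluating $T_{m,l}$ on vectors of $I$ or $I^{\perp}$ and noting the vanishing of the symplectic pairing $(I, I^{\perp}) = 0$.

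Next, I associate to each $w \in \WIQ$ the linear map $\phi_{w} \colon \LIQ \to I_{\Q}$ defined by $w(v) \equiv v + \phi_{w}([v]) \pmod{I}$ for $v \in I^{\perp}_{\Q}$, which is well-defined because $w$ preserves $I^{\perp}$ and acts trivially on $\LIQ = I^{\perp}/I$. A short computation using that $\phi_{w}$-values lie in $I$ (which $w$ fixes pointwise) shows $w \mapsto \phi_{w}$ is a group homomorphism whose kernel is exactly $\UIQ$ by definition, yielding an injection $\VIQ \hookrightarrow \hom(\LIQ, I_{\Q}) \simeq \LIQ \otimes I_{\Q}$ via the symplectic self-duality of $\LIQ$. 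For $u \in \UIQ$, the identity $(u(v), u') = (v, u')$ for all $u' \in I^{\perp}_{\Q}$ forces $u(v) - v \in (I^{\perp})^{\perp} = I$ for every $v \in \Lambda$, so that $u$ determines a map $\psi_{u} \colon \Lambda_{\Q}/I^{\perp}_{\Q} \to I_{\Q}$, viewed as an element of $I_{\Q} \otimes I_{\Q}$ via $\Lambda_{\Q}/I^{\perp}_{\Q} \simeq I_{\Q}^{\vee}$. The remaining symplectic relation $(u(v), u(v')) = (v, v')$, with the quadratic term $(\psi_{u}(v), \psi_{u}(v'))$ vanishing since both factors lie in the isotropic $I$, forces $\psi_{u}$ to be symmetric, i.e.\ $\psi_{u} \in \sym I_{\Q}$.

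Third, I compute these invariants on the transvections: direct substitution gives $\phi_{T_{\tilde m, l}}([v]) = (\tilde m, v) l$, which under the above identifications corresponds to $m \otimes l$, and $\psi_{T_{l, l'}} = l \otimes l' + l' \otimes l = 2\, l \cdot l'$. Since pure tensors span $\LIQ \otimes I_{\Q}$ and the products $l \cdot l'$ span $\sym I_{\Q}$, the transvections surject onto $\VIQ$ and $\UIQ$ respectively; injectivity is automatic because $\phi$ and $\psi$ split these maps. Assertion (1) then follows, since every $w \in \WIQ$ equals a product of $T_{\tilde m, l}$'s modulo $\UIQ$, and the residual element of $\UIQ$ is itself a product of $T_{l, l'}$'s. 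Well-definedness of the $\Q$-linear maps in assertion (2) is handled by the transvection relations: relation (4) provides the symmetry that allows the factorization through $\sym$, relations (1)--(2) give bilinearity, and relation (3) shows that changing the lift $\tilde m$ by an element of $I$ alters $T_{\tilde m, l}$ only by elements of $\UIQ$, so its class in $\VIQ$ is unchanged.

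The main technical step, and the one I expect to require the most care, is the symplectic analysis for $u \in \UIQ$: first upgrading $u(v) - v \in I^{\perp}$ to $u(v) - v \in I$ for all $v \in \Lambda$, and then extracting the symmetry $\psi_{u} \in \sym I_{\Q}$ from the preservation of the symplectic form. Everything else is a direct check on generators or a routine manipulation of the relations (1)--(4).
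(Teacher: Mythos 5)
Your proof is correct, but it proceeds differently from the paper. The paper's argument is the standard coordinate one: fix a complementary isotropic subspace $I'_{\Q}$ so that $\Lambda_{\Q}=I_{\Q}\oplus(I_{\Q}\oplus I'_{\Q})^{\perp}\oplus I'_{\Q}$, and read off the claim from the resulting block-unipotent description of $\WIQ$. You instead build the inverse isomorphisms intrinsically: the "off-diagonal" data $\phi_{w}([v])=w(v)-v\bmod I$ and $\psi_{u}([v])=u(v)-v$ are extracted directly from the stabilizer conditions, identified with elements of $\LIQ\otimes I_{\Q}$ and $\sym I_{\Q}$ via the symplectic self-duality of $\LIQ$ and $\Lambda_{\Q}/I^{\perp}_{\Q}\simeq I_{\Q}^{\vee}$, and then matched against the transvections. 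The key technical steps you flag -- upgrading $u(v)-v\in I^{\perp}_{\Q}$ to $u(v)-v\in I_{\Q}$ by pairing against $I^{\perp}_{\Q}$, and extracting symmetry from $(u(v),u(v'))=(v,v')$ using isotropy of $I_{\Q}$ -- are exactly right. Your approach has the advantage of making the word \emph{canonical} in the lemma visible without any auxiliary choice of $I'_{\Q}$, at the cost of being somewhat longer. Two small points worth tidying: you find $\psi_{T_{l,l'}}=2\,l\cdot l'$, so $\psi$ is $2$ times the inverse of the stated isomorphism (harmless over $\Q$, but the normalization should be recorded); and "injectivity is automatic because $\phi$ and $\psi$ split these maps" should be phrased as $\psi\circ\alpha=\pm 2\,\mathrm{id}$ combined with injectivity of $\psi$, rather than a literal splitting.
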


\begin{proof}
This can be checked by 
choosing an isotropic subspace $I'_{{\Q}}\subset \Lambda_{{\Q}}$ 
with $\Lambda_{{\Q}}=I^{\perp}_{{\Q}}\oplus I'_{{\Q}}$ 
and calculating the action on $I'_{{\Q}}$ and $(I_{{\Q}}\oplus I_{{\Q}}')^{\perp}$. 
\end{proof}

Thus 
${\WIQ}$ is the unipotent radical of ${\GIQ}$, 
${\UIQ}$ is the center of ${\WIQ}$, and we have the exact sequence 
\begin{equation*}
0\to {\rm Sym}^2I_{{\Q}} \to {\WIQ} \to {\LIQ}\otimes I_{{\Q}} \to 0. 
\end{equation*}
%
Since ${\UIQ}$ is a normal subgroup of ${\GIQ}$, 
we have the adjoint action of ${\GIQ}$ on ${\UIQ}$. 
Since 
$\gamma \circ T_{m,l} \circ \gamma^{-1} = T_{\gamma m, \gamma l}$ 
for $\gamma\in{\GIQ}$, 
this coincides with the natural action of ${\GIQ}$ on ${\sym}I_{{\Q}}$. 





Now let ${\G}$ be a finite-index subgroup of ${\Sp}$ 
and ${\GIZ}={\GIQ}\cap{\G}$ be the stabilizer of $I$ in ${\G}$. 
We put 
\begin{equation*}
{\WIZ} = {\WIQ}\cap{\G},  \quad {\UIZ} = {\UIQ}\cap{\G}, \quad {\VIZ} = {\WIZ}/{\UIZ}.   
\end{equation*} 
Then ${\UIZ}$ is a lattice in ${\UIQ}\simeq {\rm Sym}^{2}I_{{\Q}}$, and 
${\VIZ}$ is a lattice in ${\VIQ}\simeq {\LIQ}\otimes I_{{\Q}}$. 
We also set 
\begin{equation*}
{\GIZbar} = {\GIZ}/{\UIZ},  \qquad  \Gamma_{I} = {\GIZ}/{\WIZ}. 
\end{equation*}
Then $\Gamma_{I}$ is mapped injectively into 
${\rm Sp}(\Lambda(I))\times {\rm GL}(I)$. 
%
By definition we have the canonical exact sequences 
\begin{equation*}\label{eqn: structure of GIZ I}
0 \to {\UIZ} \to {\WIZ} \to {\VIZ} \to 0, 
\end{equation*}
\begin{equation*}\label{eqn: structure of GIZ II}
0 \to {\WIZ} \to {\GIZ} \to \Gamma_{I} \to 0. 
\end{equation*}


\subsection{Siegel domain realization}\label{ssec: Siegel domain}

The choice of $I_{{\C}}$ determines the 2-step projection 
${\LG} \dashrightarrow {\rm LG}(g'', I_{{\C}}^{\perp}) \dashrightarrow {\rm LG}({\LIC})$.  
We shall show that 
restriction of this to ${\D}\subset {\LG}$ 
defines an embedded $2$-step fibration 
\begin{equation}\label{eqn: Siegel domain realization}
{\D} \hookrightarrow {\DI} \hookrightarrow {\rm LG}(\mathcal{K}_{I}) 
\stackrel{\pi_{1}}{\to} {\VI} \stackrel{\pi_{2}}{\to}  {\DLI} 
\end{equation}
where 
\begin{itemize}
\item ${\DLI}$ is the Hermitian symmetric domain attached to $\Lambda(I)$, 
\item ${\VI}\to{\DLI}$ an affine space bundle for a vector bundle, 
\item ${\rm LG}(\mathcal{K}_{I}) \to {\VI}$ a relative Lagrangian Grassmannian, 
\item ${\DI}\to{\VI}$ a principal ${\rm Sym}^{2}I_{{\C}}$-bundle, and 
\item ${\D} \to {\VI}$ a Siegel upper half space bundle. 
\end{itemize}
This is an explicit form of the Siegel domain realization of ${\D}$ at $I$. 

We define ${\DI}$ and ${\VI}$ by 
\begin{equation*}\label{eqn: def D(I)}
{\DI} = \{ \: [V] \in {\LG} \: | \: i(\cdot, \bar{\cdot})|_{V\cap I_{{\C}}^{\perp}}>0 \: \}, 
\end{equation*}
\begin{equation*}
{\VI} = \{ \: [W] \in {\rm LG}(g'', I^{\perp}_{{\C}})  \: | \:   i(\cdot, \bar{\cdot})|_{W}>0 \: \}. 
\end{equation*}
We consider the linear algebra construction 
\begin{equation}\label{eqn: linear algebra}
V\mapsto (W, V/W) \mapsto W \mapsto {\rm Im}(W \to {\LIC})  
\end{equation}
for $[V]\in {\DI}$ where $W=V\cap I^{\perp}_{{\C}}$. 
Then $[W]\in {\VI}$. 

\begin{lemma}\label{lem: linear algebra}
Let $\Lambda(W) = (W^{\perp}\cap\Lambda_{{\C}})/W$  
and $I_{W}\subset \Lambda(W)$ be the image of $I_{{\C}}$, which is a maximal isotropic subspace of $\Lambda(W)$. 
Then we have $(V/W)\cap I_{W} = \{ 0 \}$ in $\Lambda(W)$. 
Conversely, if $\tilde{V}\cap I_{W} = \{ 0 \}$ for $[\tilde{V}]\in {\rm LG}(\Lambda(W))$, 
then $\tilde{V}=V'/W$ for some $[V']\in {\DI}$ containing $W$.  
\end{lemma}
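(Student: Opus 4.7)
The plan is a short linear-algebra exercise in $\Lambda_{\C}$, broken into a preparatory step and the two directions of the equivalence.

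\smallskip

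\noindent\textbf{Setup.} Since $W \subset V$ lies inside a Lagrangian, $W$ is isotropic, so the symplectic form on $\Lambda_{\C}$ descends to a nondegenerate form on $\Lambda(W) = (W^{\perp}\cap\Lambda_{\C})/W$. The containment $W \subset I_{\C}^{\perp}$ yields $I_{\C} \subset W^{\perp}$, so $I_{W} = (I_{\C}+W)/W$ is a well-defined isotropic subspace of $\Lambda(W)$. The positivity of the Hermitian form on $W$ forces $V\cap I_{\C} = \{0\}$: any $v$ in the intersection lies in $W$, and $v, \bar v \in I_{\C}$ isotropic gives $i(v,\bar v) = 0$, hence $v = 0$. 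Thus $\dim I_{W} = g'$, and together with $\dim W = g''$ and $\dim \Lambda(W) = 2g'$ this makes $I_{W}$ Lagrangian.

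\smallskip

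\noindent\textbf{Forward direction.} Take $v \in V$ whose class in $\Lambda(W)$ lies in $(V/W)\cap I_{W}$. Write $v = i + w$ with $i \in I_{\C}$, $w \in W$. Then $v - w = i \in V\cap I_{\C} \subset V\cap I_{\C}^{\perp} = W$, so $v \in W$ and the class is zero.

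\smallskip

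\noindent\textbf{Converse.} Given $[\tilde V] \in {\rm LG}(\Lambda(W))$ with $\tilde V\cap I_{W} = \{0\}$, let $V' \subset W^{\perp}\cap\Lambda_{\C}$ be the preimage of $\tilde V$ under $W^{\perp}\cap\Lambda_{\C}\to\Lambda(W)$. Then $W \subset V'$, $V'/W = \tilde V$, and $\dim V' = g'' + g' = g$. Isotropy of $V'$ is automatic since $v_1,v_2 \in V' \subset W^{\perp}$ satisfy $(v_1,v_2)_{\Lambda_{\C}} = ([v_1],[v_2])_{\Lambda(W)} = 0$, so $[V'] \in {\rm LG}(\Lambda_{\C})$. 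For $v \in V'\cap I_{\C}^{\perp}$, the vanishing $(v,i) = 0$ for all $i \in I_{\C}$ descends to $[v] \in I_{W}^{\perp}$; since $I_{W}$ is Lagrangian, $I_{W}^{\perp} = I_{W}$, whence $[v] \in \tilde V\cap I_{W} = \{0\}$ and $v \in W$. Thus $V'\cap I_{\C}^{\perp} = W$, and the positive definiteness of the Hermitian form on $W$ — which is part of the hypothesis $[V]\in\DI$ — immediately gives $[V']\in\DI$.

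\smallskip

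The only obstacle, and it is a modest one, is bookkeeping: keeping straight whether a given subspace is being viewed inside $\Lambda_{\C}$, inside $W^{\perp}$, or inside $\Lambda(W)$, and then invoking the Lagrangian property of $I_{W}$ — established in the setup — to close the converse by pinning down $V'\cap I_{\C}^{\perp} = W$.
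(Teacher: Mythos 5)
Your proof is correct and fills in exactly the linear algebra that the paper dismisses with the single sentence ``This is straightforward linear algebra.'' Since the paper gives no details, there is nothing to compare against; your argument is the natural one, and every step checks out: $W\subset I_{\C}^{\perp}$ gives $I_{\C}\subset W^{\perp}$, the positivity of $i(\cdot,\bar\cdot)$ on $W$ forces $V\cap I_{\C}=\{0\}$ (and hence, though you only assert it, $\dim W=g''$ via $V+I_{\C}^{\perp}=\Lambda_{\C}$), the forward direction is a three-line computation, and the converse reduces to the identity $V'\cap I_{\C}^{\perp}=W$, which you deduce cleanly from $I_{W}^{\perp}=I_{W}$ and the hypothesis $\tilde V\cap I_{W}=\{0\}$. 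The one spot you might make explicit is the derivation of $\dim W=g''$ from $V\cap I_{\C}=\{0\}$, since your dimension count for $I_{W}$ being Lagrangian rests on it, but this is a cosmetic point rather than a gap.
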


\begin{proof}
This is straightforward linear algebra. 
\end{proof}

Let $\mathcal{K}_{I}\to{\VI}$ be the symplectic vector bundle 
whose fiber over $[W]\in{\VI}$ is $\Lambda(W)=W^{\perp}/W$, 
and 
${\rm LG}(\mathcal{K}_{I}) = \cup_{[W]} {\rm LG}(\Lambda(W))$ 
be its relative Lagrangian Grassmannian. 
The construction \eqref{eqn: linear algebra} defines the fibration  
\begin{equation*}
{\DI} \hookrightarrow {\rm LG}(\mathcal{K}_{I}) \stackrel{\pi_{1}}{\to} {\VI} \stackrel{\pi_{2}}{\to} {\DLI}. 
\end{equation*}

\begin{proposition}\label{prop: Siegel upper half space bundle}
(1) The image of ${\DI} \hookrightarrow {\rm LG}(\mathcal{K}_{I})$ 
is a principal ${\UIC}$-bundle over ${\VI}$. 
For each $[W]\in {\VI}$, 
$\mathcal{D}\cap \pi_{1}^{-1}([W])$ is a translation of the Siegel upper half space 
$\mathfrak{H}_{I}$ in ${\UIC}\simeq {\rm Sym}^{2}I_{{\C}}$. 

(2) ${\VI}\to{\DLI}$ is an affine space bundle for the vector bundle $F_{I}\otimes I_{{\C}}$ 
where $F_{I}$ is the universal quotient bundle over ${\DLI}$. 
A choice of a lift ${\LIC}\hookrightarrow I^{\perp}_{{\C}}$ of ${\LIC}$ 
determines a section of ${\VI}\to{\DLI}$. 
\end{proposition}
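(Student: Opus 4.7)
The plan is to analyze the fibration \eqref{eqn: Siegel domain realization} one step at a time, using Lemma \ref{lem: linear algebra} together with the explicit description of $\UIC$ from Lemma \ref{prop: structure WIQ}. As a preliminary, $\D\subset \DI$ because positive definiteness of $i(\cdot,\bar{\cdot})|_{V}$ restricts to any subspace. Moreover the positivity of $V$ forces $V\cap I_{{\C}}=0$ (a nonzero real isotropic vector would have vanishing Hermitian norm), which is equivalent to $V+I^{\perp}_{{\C}}=\Lambda_{{\C}}$, so that $W=V\cap I^{\perp}_{{\C}}$ has dimension $g''$ and inherits a positive-definite Hermitian form. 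This shows \eqref{eqn: linear algebra} sends $\DI$ to $\VI$.

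For Part~(1) I would first verify that $\UIC$ preserves the fibers of $\pi_{1}$: by Lemma \ref{prop: structure WIQ}, $\UIC$ is generated by transvections $T_{l,l'}$ with $l,l'\in I_{{\C}}$, and the explicit formula for $T_{m,l}$ shows that each such $T_{l,l'}$ acts trivially on $I^{\perp}_{{\C}}$, so $(T_{l,l'}V)\cap I^{\perp}_{{\C}}=W$. For simple transitivity on each fiber, Lemma \ref{lem: linear algebra} identifies $\DI\cap \pi_{1}^{-1}([W])$ with the Zariski-open subset $H_{I_{W}}=\{[\tilde{V}]\in \mathrm{LG}(\Lambda(W))\mid \tilde{V}\cap I_{W}=0\}$, where $I_{W}$ plays the role of $J$ in \S\ref{ssec: factor of automorphy}. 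The unipotent subgroup of $\mathrm{Sp}(\Lambda(W)_{{\C}})$ fixing $I_{W}$ pointwise is canonically $\sym I_{W}$, acts simply transitively on $H_{I_{W}}$, and coincides with the image of $\UIC$ in $\mathrm{Sp}(\Lambda(W)_{{\C}})$ under the natural isomorphism $I_{{\C}}\simeq I_{W}$ induced by $I_{{\C}}\hookrightarrow I^{\perp}_{{\C}}\twoheadrightarrow \Lambda(W)$.

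For the Siegel upper half space assertion I would fix any base point $[V_{0}]\in \D\cap \pi_{1}^{-1}([W])$ (its existence is immediate from $\D\subset \DI$ and surjectivity onto $\DLI$, seen by lifting a positive Lagrangian of $\LIC$ along $I^{\perp}_{{\C}}\twoheadrightarrow \LIC$ and extending inside $\Lambda_{{\C}}$), and identify the fiber with $\UIC=\sym I_{{\C}}$ via $T\mapsto TV_{0}$. Choosing a normalized basis of $V_{0}$ adapted to an auxiliary isotropic complement $I_{{\Q}}'\subset \Lambda_{{\Q}}$, as in \S\ref{ssec: factor of automorphy}, and expanding $i(\cdot,\bar{\cdot})|_{T_{l,l'}V_{0}}$, the positivity of the Hermitian form reduces to $\mathrm{Im}\,\Omega>0$ for the symmetric matrix $\Omega\in \sym I_{{\C}}$ parameterizing the fiber, up to a constant imaginary shift determined by $V_{0}$. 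This is precisely a translate of $\mathfrak{H}_{I}$, and this local computation is where I expect the main technical obstacle to lie; everything else is formal manipulation of Lagrangians.

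For Part~(2), a lift of $[\bar{W}]\in \DLI$ to $\VI$ is an isotropic $W\subset I^{\perp}_{{\C}}$ with $W\cap I_{{\C}}=0$ projecting to $\bar{W}$. Given two lifts $W,W'$, sending each $w\in W$ to $w'-w\in I_{{\C}}$, where $w'\in W'$ has the same image in $\LIC$ as $w$, defines a linear map $\phi\colon \bar{W}\to I_{{\C}}$; conversely, any $\phi\in \mathrm{Hom}(\bar{W},I_{{\C}})$ yields a valid lift because $I_{{\C}}$ is contained in the radical of the symplectic form on $I^{\perp}_{{\C}}$, so $\phi$-deformations preserve isotropy automatically, and positivity is likewise unaffected. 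Hence the fiber is an affine space over $\mathrm{Hom}(\bar{W},I_{{\C}})=\bar{W}^{\vee}\otimes I_{{\C}}\simeq F_{I}|_{[\bar{W}]}\otimes I_{{\C}}$, using the symplectic identification $F_{I}\simeq E_{I}^{\vee}$ from \S\ref{ssec: marked family}. Finally, any splitting $\LIC\hookrightarrow I^{\perp}_{{\C}}$ singles out a distinguished lift of each $\bar{W}$ and therefore yields the claimed section of $\pi_{2}$.
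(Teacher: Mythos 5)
Your proof is correct and follows essentially the same route as the paper: for (1), identifying $\DI\cap\pi_1^{-1}([W])$ via Lemma \ref{lem: linear algebra} with the affine chart $\{\tilde V\cap I_W=0\}\simeq\sym I_W$ on which $\UIC$ translates, then reducing the positivity condition to the Siegel upper half space computation of \S\ref{ssec: factor of automorphy}; and for (2), parametrizing the fiber $\pi_2^{-1}([U])$ by $\hom(U,I_{{\C}})$ via graphs. You spell out several steps the paper leaves implicit (that $\D\subset\DI$, that the transvections $T_{l,l'}$ fix $I^{\perp}_{{\C}}$ pointwise, and that $\phi$-deformations preserve both isotropy and positivity because $I_{{\C}}$ is in the radical of the form on $I^{\perp}_{{\C}}$), but the underlying argument is the same.
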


\begin{proof}
(1) By Lemma \ref{lem: linear algebra}, 
${\DI}\cap \pi_{1}^{-1}([W])$ coincides with the Zariski open set  
$\{  [\tilde{V}]  |  \tilde{V}\cap I_{W} = \{ 0\}  \} \simeq {\rm Sym}^{2}I_{W}$ 
of ${\rm LG}(\Lambda(W))$, 
on which ${\UIC}\simeq {\rm Sym}^{2}I_{{\C}}$ acts by translation. 
The second assertion is similar to the case $g'=g$ in \S \ref{ssec: factor of automorphy}. 
%

(2) Let $[U]\in {\DLI}$ and $U'\subset I_{{\C}}^{\perp}$ be its inverse image. 
Then 
\begin{equation*}
\pi_{2}^{-1}([U]) = \{ \, W\subset U' \: | \: \dim W = g'', \, W\cap I_{{\C}} = \{ 0 \} \, \}. 
\end{equation*}
If we choose a lift $U'\simeq I_{{\C}}\oplus U$ of $U$, 
we obtain an isomorphism 
$\pi_{2}^{-1}([U])\simeq {\hom}(U, I_{{\C}})$ 
by taking the graph of linear maps $U\to I_{{\C}}$. 
A lift of ${\LIC}$ determines a lift of every $U$ and hence a section of $\pi_{2}$.  
\end{proof}

We have thus obtained a 2-step fibration as in \eqref{eqn: Siegel domain realization}. 
Finally, we describe the action of ${\GIZ}$.  
We consider in three steps: 
first by ${\UIZ}$, 
then by ${\VIZ}$, 
and finally by $\Gamma_{I}$. 
Let $T_{I}={\UIC}/{\UIZ}$ 
be the algebraic torus associated with the lattice ${\UIZ}$. 
We have 
$\mathfrak{H}_{I}/{\UIZ} = {\rm ord}^{-1}(\mathcal{C}_{I})$ inside $T_{I}$,  
where $\mathcal{C}_{I}\subset {\sym}I_{{\R}}$ is the cone of positive definite forms 
and ${\rm ord}\colon T_{I}\to {\UIR}$ is the projection map as in \cite{AMRT} p.2. 

\begin{proposition}\label{prop: action of GIZ}
(1) 
The quotient ${\TI}={\DI}/{\UIZ}$ is a principal $T_{I}$-bundle over ${\VI}$, 
and ${\BI}={\D}/{\UIZ}$ is a ${\rm ord}^{-1}(\mathcal{C}_{I})$-bundle inside it.  

(2) The group ${\VIZ}$ acts on ${\DLI}$ trivially, 
and on the fibers of ${\VI}\simeq F_I\otimes I_{{\C}}$ 
by translation by the lattice ${\VIZ}$ of ${\LIQ}\otimes I_{{\Q}}$. 
Thus ${\VI}/{\VIZ}$ is a fibration of abelian varieties over ${\DLI}$. 

(3) The group $\Gamma_{I}$ acts on ${\VI}/{\VIZ}\to{\DLI}$ 
by the equivariant action of ${\rm Sp}(\Lambda(I))\times {\rm GL}(I)$ on $F_I\otimes I_{{\C}}$ 
plus translation on the fibers. 
\end{proposition}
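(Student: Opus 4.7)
The plan is to verify each part by combining the bundle structure of Proposition~\ref{prop: Siegel upper half space bundle} with the explicit presentation of ${\WIQ}$ from Lemma~\ref{prop: structure WIQ}.

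Part~(1) is essentially immediate from Proposition~\ref{prop: Siegel upper half space bundle}(1). Since ${\DI}\to{\VI}$ is a principal ${\UIC}$-bundle and ${\UIZ}$ sits in ${\UIC}\simeq{\sym}I_{{\C}}$ as a full lattice, the quotient ${\TI}={\DI}/{\UIZ}\to{\VI}$ is a principal $T_{I}={\UIC}/{\UIZ}$-bundle. Inside each fibre, ${\D}$ cuts out a translate of $\mathfrak{H}_{I}$, which descends to ${\rm ord}^{-1}(\mathcal{C}_{I})$ in $T_{I}$, giving ${\BI}$.

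For part~(2), I would compute directly with the generators $T_{\tilde{m},l}$ of ${\WIQ}$. For $v\in I^{\perp}_{{\C}}$ and $l\in I_{{\C}}$ the pairing $(l,v)$ vanishes, so
\[
T_{\tilde{m},l}(v) \ = \ v + (\tilde{m},v)\,l.
\]
Thus $T_{\tilde{m},l}$ preserves $I^{\perp}_{{\C}}$ and acts as the identity on ${\LIC}$, giving the trivial action on ${\DLI}$. For $[W]\in{\VI}$, the image $T_{\tilde{m},l}(W)$ has the same image $[U]\in{\DLI}$ as $W$, and under the identification $\pi_{2}^{-1}([U])\simeq{\hom}(U,I_{{\C}})$ from Proposition~\ref{prop: Siegel upper half space bundle}(2), the passage from $W$ to $T_{\tilde{m},l}(W)$ is translation by the linear map $u\mapsto (\tilde{m},u)\,l$. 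Because $(\tilde{m},\cdot)$ on $I^{\perp}_{{\C}}$ factors through ${\LIC}$ and corresponds there to the symplectic pairing with $m$, this translation is the image of $m\otimes l\in{\LIC}\otimes I_{{\C}}$ in $F_{I}\otimes I_{{\C}}$, matching the class $[T_{\tilde{m},l}]\in{\VIQ}$ under Lemma~\ref{prop: structure WIQ}(2). Restricting to ${\VIZ}\subset{\VIQ}$ yields the stated translation action.

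For part~(3), each $\gamma\in{\GIZ}$ stabilises $I$ and $I^{\perp}$, so it descends to an element of ${\rm Sp}(\Lambda(I))\times{\rm GL}(I)$ whose tensor action on $F_{I}\otimes I_{{\C}}\to{\DLI}$ models the linear part of $\gamma$. The subtlety---and the main bookkeeping obstacle of the proposition---is that the isomorphism ${\VI}\simeq F_{I}\otimes I_{{\C}}$ depends on a choice of lift ${\LIC}\hookrightarrow I^{\perp}_{{\C}}$ which is not $\gamma$-equivariant; the discrepancy lies in ${\WIQ}/{\UIQ}\simeq{\VIQ}$ and acts fibrewise by translation by part~(2). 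Two lifts to ${\GIZ}$ of the same class in $\Gamma_{I}={\GIZ}/{\WIZ}$ differ by an element of ${\WIZ}$, whose translation class lies in ${\VIZ}$, so after passing to ${\VI}/{\VIZ}$ the translation term is well-defined and combines with the equivariant linear action into the description asserted.
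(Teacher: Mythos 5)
Your proof is correct and follows essentially the same route as the paper: part (1) from Proposition \ref{prop: Siegel upper half space bundle}, part (2) by direct computation with the generators $T_{\tilde m,l}$ on points of $\pi_2^{-1}([U])$ viewed as graphs, and part (3) by comparing a lift of $\gamma\in\Gamma_I$ to $\Gamma(I)_{\mathbb Z}$ with the linear part determined by a choice of $I'_{\mathbb Q}$ (equivalently, by the induced section of $\mathcal{V}_I\to\mathcal{D}_{\Lambda(I)}$). The only difference is that you spell out the computation in (2) in detail, including the verification that $(\tilde m,\cdot)$ on $I^{\perp}_{\mathbb C}$ descends through $\Lambda(I)_{\mathbb C}$ and matches the class $[T_{\tilde m,l}]$ of Lemma \ref{prop: structure WIQ}(2), where the paper leaves this as a one-line sketch.
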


\begin{proof}
(1) follows from Proposition \ref{prop: Siegel upper half space bundle}. 
(2) can be seen by expressing points of $\pi_{2}^{-1}([U])$ as graphs of $U\to I_{{\C}}$ 
and calculating the action of $T_{l,m}$ on them. 
%
(3) can be seen by choosing an isotropic subspace $I'_{{\Q}}$ as before 
and the corresponding sections of \eqref{eqn: structure G(I)Q} and of ${\VI}\to{\DLI}$. 
The translation is given by the difference of the two lifts of an element of $\Gamma_{I}$ 
in ${\GIZ}$ and in the section of \eqref{eqn: structure G(I)Q}. 
%
\end{proof}



\section{Toroidal compactification}\label{sec: toroidal}

In this section we recall toroidal compactification 
of ${\AG}$ following \cite{AMRT}, \cite{HKW}. 
We denote by $T(N)=N_{{\C}}/N$ 
the algebraic torus associated to a free ${\Z}$-module $N$ of finite rank. 
We especially write $T_{I}=T({\UIZ})$. 

\subsection{Relative torus embedding}\label{ssec: relative torus embed}

Let $I$ be a primitive isotropic sublattice of $\Lambda$. 
We equip ${\UIR}\simeq {\sym}I_{{\R}}$ with a ${\Z}$-structure by ${\UIZ}$. 
Let $\mathcal{C}_{I} \subset {\UIR}$ be 
the cone of positive definite forms on $I_{{\R}}^{\vee}$,  
and $\mathcal{C}_{I}^{\ast}\subset {\UIR}$ the cone of semi positive definite forms 
whose kernel is defined over ${\Q}$. 
In other words, 
$\mathcal{C}_{I}^{\ast}=\bigcup_{I'}\mathcal{C}_{I'}$ 
where $I'$ ranges over all primitive sublattices of $I$ (including $I'=\{ 0 \}$). 
Recall that $\Gamma(I)_{{\R}}$ acts on ${\UIR}$ by the adjoint action 
which coincides with the natural action on ${\sym}I_{{\R}}$. 
%
A fan $\Sigma=(\sigma_{\alpha})_{\alpha}$ in ${\UIR}$ 
is called ${\GIZ}$-admissible if 
\begin{enumerate}
\item the support of $\Sigma$ is $\mathcal{C}_{I}^{\ast}$, 
\item $\Sigma$ is preserved by the action of ${\GIZ}$, and  
\item $\Sigma/{\GIZ}$ consists of only finitely many cones. 
\end{enumerate}

Let $T_{I}\hookrightarrow T_{I}^{\Sigma}$ be the torus embedding 
defined by the fan $\Sigma$. 
A ray ${\R}_{\geq0}Q$ in $\Sigma$ corresponds to 
a $T_{I}$-orbit of codimension $1$ in the boundary of $T_{I}^{\Sigma}$,  
say $\Delta_{Q}$. 
We always take $Q$ to be a primitive vector of ${\UIZ}$. 
Let $T_{I}^{Q}$ be the torus embedding of $T_{I}$ defined by the ray ${\R}_{\geq0}Q$. 
Then $T_{I}\subset T_{I}^{Q} \subset T_{I}^{\Sigma}$ 
and $\Delta_{Q}$ is the unique boundary divisor of $T_{I}^{Q}$. 
Let $T_{Q}=T({\Z}Q)\simeq {\C}^{\times}$ and 
$\bar{T}_{Q}\simeq {\C}$ be its the standard partial compactification. 
Then $T_{I}^{Q} \simeq (T_{I}\times \bar{T}_{Q})/T_{Q}$ and 
$\Delta_{Q}\simeq T_{I}/T_{Q}\simeq T({\UIZ}/{\Z}Q)$.  
The embedding $T_{Q}\hookrightarrow T_{I}$ extends to 
$\bar{T}_{Q} \hookrightarrow T_{I}^{Q}$
which gives a normal space of $\Delta_{Q}$ at its base point.  

A character $e^{\chi}={\exp}(2\pi i \chi(\cdot))$ of $T_{I}$, 
where $\chi\in U(I)_{{\Z}}^{\vee}$, 
extends holomorphically over $\Delta_{Q}$ 
if and only if $\chi(Q)\geq0$,  
and it is identically $0$ at $\Delta_{Q}$ if and only if $\chi(Q)>0$. 
By restriction, the character group of $\Delta_{Q}$ is identified with 
$Q^{\perp}\cap U(I)_{{\Z}}^{\vee}$. 
If we choose $\chi\in U(I)_{{\Z}}^{\vee}$ such that $\chi(Q)=1$ 
(this is possible because $Q$ is primitive in ${\UIZ}$),  
then $\Delta_{Q}$ is defined by $e^{\chi}=0$ and  
$e^{\chi}$ gives a normal parameter around $\Delta_{Q}$. 

Now let ${\TI}\to{\VI}$ be the principal $T_{I}$-bundle 
constructed in Proposition \ref{prop: action of GIZ}. 
We can form the relative torus embedding 
$\mathcal{T}_{I}^{\Sigma} 
= ({\TI}\times T_{I}^{\Sigma})/T_{I}$. 
Let $\mathcal{B}_{I}^{\Sigma}$ be the interior of the closure of ${\BI}$ in $\mathcal{T}_{I}^{\Sigma}$. 
This is the partial compactification in the direction of $I$ defined by $\Sigma$. 
Since ${\GIZ}$ preserves $\Sigma$, 
the ${\GIZbar}$-action on ${\BI}$ extends to the action on $\mathcal{B}_{I}^{\Sigma}$. 
This is properly discontinuous (cf.~\cite{AMRT}). 

\subsection{Adjacent cusps}

Let $J$ be a primitive isotropic sublattice of $\Lambda$ that contains $I$. 
The cusp of ${\D}$ associated to $J$ 
is in the closure of the cusp associated to $I$. 
We shall describe the relationship between 
the relative torus embeddings for $I$ and for $J$. 
First note that ${\UIR}\simeq{\sym}I_{{\R}}$ is contained in $U(J)_{{\R}}\simeq{\sym}J_{{\R}}$. 
Then ${\UIZ}$ is a primitive sublattice of $U(J)_{{\Z}}$, 
so $T_{I}$ is a sub torus of $T_{J}$. 
The cone $\mathcal{C}_{I}^{\ast}$ is an extremal sub cone of $\mathcal{C}_{J}^{\ast}$. 
If we have a fan $\Sigma_{J}$ in $U(J)_{{\R}}$ with support $\mathcal{C}_{J}^{\ast}$, 
its restriction $\Sigma_{I}=\Sigma_{J}|_{I}$ to ${\UIR}$ is a fan in ${\UIR}$ 
with support $\mathcal{C}_{I}^{\ast}$. 
Here $\Sigma_{J}|_{I}$ consists of cones $\sigma_{\alpha}$ in $\Sigma_{J}$ with 
$\sigma_{\alpha}\subset \mathcal{C}_{I}^{\ast}$. 
The embedding $T_{I}\hookrightarrow T_{J}$ extends to 
$T_{I}^{\Sigma_{I}}\hookrightarrow T_{J}^{\Sigma_{J}}$. 

We set $\overline{U(J)}_{{\Z}}=U(J)_{{\Z}}/{\UIZ}$. 
We have the quotient map 
\begin{equation*}
{\BI}={\D}/{\UIZ} \to \mathcal{B}_{J} = {\D}/U(J)_{{\Z}} 
\end{equation*}
by $\overline{U(J)}_{{\Z}}$. 
Note that 
$U(J)_{{\Z}}\subset {\GIZ}$ and so 
$\overline{U(J)}_{{\Z}}\subset {\GIZbar}$.

\begin{lemma}\label{lem: glue}
For $I\subset J$ 
the quotient map ${\BI}\to\mathcal{B}_{J}$ extends to 
an etale map $\mathcal{B}_{I}^{\Sigma_{I}} \to \mathcal{B}_{J}^{\Sigma_{J}}$. 
More specifically, it factorizes as 
\begin{equation*}
\mathcal{B}_{I}^{\Sigma_{I}} \to  
\mathcal{B}_{I}^{\Sigma_{I}}/\overline{U(J)}_{{\Z}} \hookrightarrow 
\mathcal{B}_{J}^{\Sigma_{J}},  
\end{equation*} 
where the first map is a free quotient map, 
and the second is an open immersion whose image  
does not intersect with the boundary strata of $\mathcal{B}_{J}^{\Sigma_{J}}$ 
corresponding to the cones in $\Sigma_{J}-\Sigma_{I}$. 
\end{lemma}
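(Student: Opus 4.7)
My plan is to derive this lemma from a toric-geometric compatibility between the fans $\Sigma_I$ and $\Sigma_J$, and then transfer the conclusion via the principal-torus-bundle descriptions of $\mathcal{B}_I^{\Sigma_I}$ and $\mathcal{B}_J^{\Sigma_J}$.

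First I would set up the toric picture. Since $\UIZ$ is a primitive sublattice of $U(J)_{\Z}$, there is a short exact sequence $1 \to T_I \to T_J \to T(\overline{U(J)}_{\Z}) \to 1$. Each $\sigma \in \Sigma_I$ lies in $\UIR$, so after choosing any splitting $U(J)_{\Z} = \UIZ \oplus M$ the affine toric chart $U_\sigma^{(J)} \subset T_J^{\Sigma_J}$ decomposes canonically as $U_\sigma^{(I)} \times T(M)$, where $U_\sigma^{(I)} \subset T_I^{\Sigma_I}$ is the corresponding chart for the sublattice. Gluing over $\sigma \in \Sigma_I$ yields an open subvariety $(T_J^{\Sigma_J})^\circ \subset T_J^{\Sigma_J}$ which is a principal $T(\overline{U(J)}_{\Z})$-bundle over $T_I^{\Sigma_I}$ and contains exactly the $T_J$-orbits indexed by cones in $\Sigma_I$; the strata for cones in $\Sigma_J - \Sigma_I$ are excluded by construction.

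Next I would lift this to the relative setting. Because $I \subset J \subset J^\perp \subset I^\perp$, every element of $U(J)_{\Z}$ fixes $I$ pointwise, so $U(J)_{\Z} \subset \GIZ$; hence $\overline{U(J)}_{\Z} = U(J)_{\Z}/\UIZ$ acts on $\mathcal{B}_I = \mathcal{D}/\UIZ$, and the quotient map to $\mathcal{B}_J = \mathcal{D}/U(J)_{\Z}$ is a principal $\overline{U(J)}_{\Z}$-bundle (using that $U(J)_{\Z}$, as a unipotent lattice, acts freely on $\mathcal{D}$). By Proposition \ref{prop: action of GIZ}, this action decomposes into an action on the base $\mathcal{V}_I$ together with a holomorphic shift in the $T_I$-direction, both of which extend continuously to $\mathcal{B}_I^{\Sigma_I}$. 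Freeness on every boundary stratum then reduces to Step 1, since distinct elements of $\overline{U(J)}_{\Z}$ induce distinct translations in $T_J/T_I = T(\overline{U(J)}_{\Z})$, which acts freely on $(T_J^{\Sigma_J})^\circ$. The quotient $\mathcal{B}_I^{\Sigma_I}/\overline{U(J)}_{\Z}$ is therefore realized as the open subvariety of $\mathcal{B}_J^{\Sigma_J}$ corresponding to $(T_J^{\Sigma_J})^\circ$, and it avoids the strata for cones in $\Sigma_J - \Sigma_I$.

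The main obstacle is Step 2, namely pinning down how $\overline{U(J)}_{\Z}$ acts on the relative torus-bundle structure at the cusp $I$ and verifying that the induced action on each boundary stratum of $\mathcal{B}_I^{\Sigma_I}$ is free. This rests on careful bookkeeping of $U(J)_{\Z}$ as a subgroup of $\GIZ$, using the explicit Siegel-domain description at $I$ developed in Section \ref{sec: Siegel domain}.
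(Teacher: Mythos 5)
Your Step 1, establishing that $T_J^{\Sigma_I}$ sits inside $T_J^{\Sigma_J}$ as the open subvariety complementary to the strata of $\Sigma_J-\Sigma_I$, and that it fibers over $T_I^{\Sigma_I}$, is essentially the same toric observation the paper makes. The problem is in Step 2, and it is a genuine gap rather than a detail you left to fill in.

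The freeness claim as you phrase it does not hold up. You write that ``distinct elements of $\overline{U(J)}_{{\Z}}$ induce distinct translations in $T_J/T_I = T(\overline{U(J)}_{{\Z}})$.'' But the natural map $\overline{U(J)}_{{\Z}} \hookrightarrow \overline{U(J)}_{{\C}} \to \overline{U(J)}_{{\C}}/\overline{U(J)}_{{\Z}} = T(\overline{U(J)}_{{\Z}})$ sends the whole lattice to the identity element, so the lattice action you are trying to quotient by cannot be read off as translation by distinct points of the torus $T(\overline{U(J)}_{{\Z}})$. More fundamentally, the action of $\overline{U(J)}_{{\Z}}$ on $\mathcal{B}_I^{\Sigma_I}$ is not a translation in a $T_J/T_I$-fiber direction at all: $\mathcal{B}_I^{\Sigma_I}$ has no $T_J/T_I$-fiber structure. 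What it does have is a $T_I^{\Sigma_I}$-fibration over $\mathcal{V}_I$, and $\overline{U(J)}_{{\Z}} \subset {\GIZbar}$ acts compatibly over an action on the base $\mathcal{V}_I$. The freeness you need is the freeness of this base action, which you never invoke. That is in fact the easy part: it follows because $U(J)_{{\C}}$ acts simply transitively on the fibers of $\mathcal{D}(J) \to \mathcal{V}_J$, hence freely on $\mathcal{D}(J) \supset \mathcal{D}(I)$, and passing to $\mathcal{V}_I = \mathcal{D}(I)/{\UIC}$ kills only ${\UIC} \cap U(J)_{{\Z}} = {\UIZ}$. Once the base action is free, the total action on $\mathcal{B}_I^{\Sigma_I}$ is automatically free, and the $T_I$-shift you mention plays no role in that conclusion.

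The paper avoids this confusion by inserting the intermediate relative torus embedding $\mathcal{T}_J^{\Sigma_I} = (\mathcal{T}_J\times T_I^{\Sigma_I})/T_I$ and factoring $\mathcal{T}_I^{\Sigma_I} \to \mathcal{T}_J^{\Sigma_I} \to \mathcal{T}_J^{\Sigma_J}$. The second arrow is the purely toric open immersion you already have. The first arrow is a $T_I^{\Sigma_I}$-equivariant map of bundles, so its structure is determined entirely by the induced map on bases $\mathcal{V}_I = \mathcal{D}(I)/{\UIC} \to \mathcal{D}(J)/({\UIC}+U(J)_{{\Z}}) = \mathcal{T}_J/T_I$, which factors as a free quotient by $\overline{U(J)}_{{\Z}}$ followed by an open embedding. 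This separates the lattice quotient (horizontal, on the base $\mathcal{V}_I$) from the toric open inclusion (in the fiber direction) and gives you the stated factorization with no appeal to a torus acting freely. If you want to repair your argument along your own lines, replace your freeness justification by the observation about the base action, and then the rest of Step 2 carries through.
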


\begin{proof}
Since $I\subset J \subset J^{\perp} \subset I^{\perp}$, 
we have ${\DI} \subset \mathcal{D}(J)$. 
This is clearly compatible with the action of 
${\UIC}\subset U(J)_{{\C}}$. 
Dividing by ${\UIZ}\subset U(J)_{{\Z}}$, we obtain a map 
${\TI}\to \mathcal{T}_{J}$ which is compatible with the action of $T_{I}\hookrightarrow T_{J}$. 
Hence ${\TI}\to \mathcal{T}_{J}$ extends to 
$\mathcal{T}_{I}^{\Sigma_{I}}\to \mathcal{T}_{J}^{\Sigma_{J}}$ 
whose restriction gives 
$\mathcal{B}_{I}^{\Sigma_{I}}\to \mathcal{B}_{J}^{\Sigma_{J}}$. 

We shall observe 
$\mathcal{T}_{I}^{\Sigma_{I}}\to \mathcal{T}_{J}^{\Sigma_{J}}$ 
more closely. 
We put 
\begin{equation*}
\mathcal{T}_{J}^{\Sigma_{I}} = 
(\mathcal{T}_{J}\times T_{J}^{\Sigma_{I}})/T_{J} \simeq 
(\mathcal{T}_{J}\times T_{I}^{\Sigma_{I}})/T_{I}. 
\end{equation*}
Then $\mathcal{T}_{I}^{\Sigma_{I}}\to \mathcal{T}_{J}^{\Sigma_{J}}$ 
factorizes as 
\begin{equation*}
\mathcal{T}_{I}^{\Sigma_{I}}\to 
\mathcal{T}_{J}^{\Sigma_{I}} \to 
\mathcal{T}_{J}^{\Sigma_{J}}. 
\end{equation*}
The second map 
$\mathcal{T}_{J}^{\Sigma_{I}} \to \mathcal{T}_{J}^{\Sigma_{J}}$ 
is an open embedding, 
whose complement consists of boundary strata 
corresponding to the cones in $\Sigma_{J} - \Sigma_{I}$. 
As for the first map 
$\mathcal{T}_{I}^{\Sigma_{I}} \to \mathcal{T}_{J}^{\Sigma_{I}}$, 
note that ${\TI}\to \mathcal{T}_{J}$ is a $T_{I}$-equivariant map between 
the principal $T_{I}$-bundles 
${\TI}\to{\VI}$ and $\mathcal{T}_{J}\to \mathcal{T}_{J}/T_{I}$, 
The map between the bases 
${\VI}\to\mathcal{T}_{J}/T_{I}$ 
factorizes as 
\begin{equation*}
{\VI}= {\DI}/{\UIC} \to {\DI}/{\UIC}+U(J)_{{\Z}} \hookrightarrow \mathcal{D}(J)/{\UIC}+U(J)_{{\Z}} = \mathcal{T}_{J}/T_{I}. 
\end{equation*}
The first map is a free quotient by $\overline{U(J)}_{{\Z}}$, 
and the second is an open embedding. 
Thus $\mathcal{T}_{I}^{\Sigma_{I}} \to \mathcal{T}_{J}^{\Sigma_{I}}$ 
is also a composition of a free quotient by $\overline{U(J)}_{{\Z}}$ 
and an open embedding. 
\end{proof}

\subsection{Toroidal compactification}

A toroidal compactification of ${\AG}$ is constructed from the following data. 

\begin{definition}[\cite{AMRT}, \cite{HKW}]
An admissible collection of fans for ${\G}$ is 
a collection $\Sigma=(\Sigma_{I})_{I}$ of fans, 
one for each primitive isotropic sublattice $I$ of $\Lambda$, 
which satisfies the following conditions: 
\begin{enumerate}
\item $\Sigma_{I}$ is a ${\GIZ}$-admissible fan in ${\UIR}$, 
\item $\gamma(\Sigma_{I})=\Sigma_{\gamma I}$ for $\gamma\in{\G}$, and 
\item when $I\subset J$, then $\Sigma_{J}|_{I}=\Sigma_{I}$. 
\end{enumerate}
\end{definition}

We will abbreviate $\mathcal{B}_{I}^{\Sigma_{I}}=\mathcal{B}_{I}^{\Sigma}$ 
when no confusion is likely to occur. 
The toroidal compactification of ${\AG}$ by $\Sigma$ is defined as (\cite{AMRT}, \cite{HKW})
\begin{equation*}
{\AGcpt} = \left( \bigsqcup_{I} \mathcal{B}_{I}^{\Sigma_{I}} \right)/ \sim,  
\end{equation*}
where 
$I$ ranges over all primitive isotropic sublattices of $\Lambda$ 
(including $I=\{0\}$ where ${\BI}=\mathcal{B}_{I}^{\Sigma_{I}}={\D}$), 
and $\sim$ is the equivalence relation generated by the following relations: 
\begin{enumerate}
\item the isomorphism 
$\gamma: \mathcal{B}_{I}^{\Sigma_{I}} \to \mathcal{B}_{\gamma I}^{\Sigma_{\gamma I}}$ 
by $\gamma\in \Gamma$, and  
\item the etale map 
$\mathcal{B}_{I}^{\Sigma_{I}}\to \mathcal{B}_{J}^{\Sigma_{J}}$ 
for $I\subset J$ as in Lemma \ref{lem: glue}. 
\end{enumerate}

Let $\Sigma_{I}^{\circ}=\Sigma_{I}\backslash \cup_{K\subsetneq I}\Sigma_{K}$ 
be the set of cones in $\Sigma_{I}$ whose relative interior is contained in $\mathcal{C}_{I}$. 
We write $\Delta_{\sigma, I}$ for the boundary stratum of 
$\mathcal{B}_{I}^{\Sigma}$ corresponding to a cone $\sigma\in\Sigma_{I}$, 
and let 
$\Delta_{I} = \bigcup_{\sigma \in \Sigma_{I}^{\circ}} \Delta_{\sigma, I}$ 
be the union of boundary strata that does not come from 
higher dimensional cusps adjacent to $I$. 
By Lemma \ref{lem: glue}, the natural map 
$\Delta_{I}/{\GIZbar} \to {\AGcpt}$ 
is injective. 

\begin{theorem}[\cite{AMRT}]\label{thm: toroidal} 
Let $\Sigma$ be an admissible collection of fans for ${\G}$. 

(1) ${\AGcpt}$ is a compact Moishezon space containing ${\AG}$ as a Zariski open set. 
We have a surjective morphism from 
${\AGcpt}$ to the Satake compactification of ${\AG}$,  
which maps $\Delta_{I}$ to the boundary component for $I$. 

(2) For each primitive isotropic sublattice $I$ of $\Lambda$, the natural map 
\begin{equation*}
\mathcal{B}_{I}^{\Sigma}/{\GIZbar} \to {\AGcpt} 
\end{equation*}
is isomorphic on an open neighborhood of $\Delta_{I}/{\GIZbar}$. 
\end{theorem}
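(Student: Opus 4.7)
This is the main existence theorem for toroidal compactifications of \cite{AMRT} applied to the symplectic case through the Siegel domain realization of \S\ref{sec: Siegel domain} and the relative torus embeddings of \S\ref{ssec: relative torus embed}. The plan is to show that $\sqcup_{I}\mathcal{B}_{I}^{\Sigma}/\!\sim$ is a Hausdorff analytic space locally modeled on $\mathcal{B}_{I}^{\Sigma}/\GIZbar$, then deduce compactness and the map to the Satake compactification from Baily--Borel, and finally extract the local property (2) from Lemma \ref{lem: glue}.

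First I would verify that $\sim$ is a well-defined equivalence relation and that the quotient is a Hausdorff analytic space. Transitivity along chains $I\subset J\subset K$ reduces to the compatibility $\Sigma_{K}|_{I}=\Sigma_{I}$ combined with the $\G$-equivariance $\gamma\Sigma_{I}=\Sigma_{\gamma I}$. The action of $\GIZbar$ on $\mathcal{B}_{I}^{\Sigma}$ being properly discontinuous (already noted from \cite{AMRT}) and the maps in Lemma \ref{lem: glue} being etale supply the analytic structure on the quotient; Hausdorffness needs the admissibility condition $|\Sigma_{I}/\GIZ|<\infty$ to rule out infinite accumulation of identifications near any cusp. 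For (1), compactness and the morphism to the Satake compactification $A(\G)^{*}$ are obtained together: Baily--Borel modular forms extend across each chart $\mathcal{B}_{I}^{\Sigma}$ to the $I$-boundary component of $A(\G)^{*}$ because the torus-invariant Fourier expansions at $I$ control their growth along $\Delta_{I}$. This yields a proper surjection $\AGcpt\to A(\G)^{*}$ sending $\Delta_{I}$ to the $I$-component, whence compactness of $\AGcpt$ follows, and the Moishezon property follows from properness over the projective $A(\G)^{*}$ together with algebraicity of the local charts---pulling back an ample bundle from $A(\G)^{*}$ and adding a small boundary correction produces a big line bundle.

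For (2) the essential input is Lemma \ref{lem: glue}. A boundary point lying on $\Delta_{\sigma,I}$ with $\sigma\in\Sigma_{I}^{\circ}$ satisfies $\sigma\subset\mathcal{C}_{I}$ (not contained in $\mathcal{C}_{I'}$ for any proper $I'\subsetneq I$), so no gluing from a smaller cusp $K\subsetneq I$ reaches it---the image of $\mathcal{B}_{K}^{\Sigma}$ in $\mathcal{B}_{I}^{\Sigma}$ is forced into strata outside $\Sigma_{I}^{\circ}$. In the other direction, Lemma \ref{lem: glue} guarantees that for $I\subset J$ the etale map $\mathcal{B}_{I}^{\Sigma}\to \mathcal{B}_{J}^{\Sigma}$ misses the boundary strata of $\mathcal{B}_{J}^{\Sigma}$ corresponding to cones in $\Sigma_{J}\setminus\Sigma_{I}$. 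Hence near $\Delta_{I}$ the only identifications are by $\GIZ$, so the natural map $\mathcal{B}_{I}^{\Sigma}/\GIZbar\to\AGcpt$ is a local isomorphism there. The main obstacle I expect is the careful verification of the Hausdorff property, where the interplay of $\G$-translates and inclusion-induced gluings must not produce non-separated limits; this is precisely the point at which admissibility of $\Sigma$ plays its essential role.
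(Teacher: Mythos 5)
This statement is labeled as a citation to \cite{AMRT}; the paper gives no proof of its own and, immediately after the theorem, refers the reader to AMRT p.~175 for property (2). So there is no in-paper argument to compare your proposal against: what you have written is a sketch of (parts of) the AMRT construction.

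As a sketch, you have the right skeleton but the genuinely hard steps are announced rather than carried out. First, your compactness argument is circular as stated: you propose to deduce compactness of ${\AGcpt}$ from a proper surjection to the Satake compactification, but establishing that the map from an a priori non-compact source is proper requires essentially the same input as compactness itself. In AMRT, compactness of ${\AGcpt}$ is proved directly from reduction theory (finitely many Siegel-set neighborhoods, using the finiteness $|\Sigma_{I}/{\GIZ}|<\infty$), and the morphism to the Satake compactification is constructed afterward. Second, the Hausdorff/separatedness property — which you correctly flag as the crux — is only signposted; the actual content is a separation estimate showing that any chain of identifications linking two points near the $I$-cusp collapses to a single ${\GIZ}$-element, and this same estimate is what underlies the injectivity you need for part (2). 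Your phrase ``near $\Delta_{I}$ the only identifications are by ${\GIZ}$'' is the conclusion of that estimate, not an immediate consequence of Lemma \ref{lem: glue}: that lemma only controls inclusion-induced gluings and says nothing about identifications via $\gamma\in{\G}$ with $\gamma I\ne I$. Finally, the Moishezon claim is compressed to ``ample plus a small boundary correction''; in AMRT this is a nontrivial positivity step (semi-ampleness of a power of the extended line bundle $L$ on the toroidal compactification). Given all this, the paper's decision to cite \cite{AMRT} rather than reprove the theorem is the reasonable one, and your proposal should be read as a map of the AMRT proof rather than a self-contained argument.
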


By the property (2) (see \cite{AMRT} p.~175), 
the quotient space $\mathcal{B}_{I}^{\Sigma}/{\GIZbar}$ 
gives a local model of ${\AGcpt}$ around the boundary strata lying over the $I$-cusp. 



\subsection{Extension of the modular line bundle}\label{ssec: extend L}

There is a natural number $k'$ such that 
for every $x\in{\D}$ and $\gamma\in{\G}$ with $\gamma(x)=x$, 
$\gamma$ acts trivially on $L_{x}^{\otimes k'}$. 
Then $L^{\otimes k'}$ descends to a line bundle over ${\AG}$. 
In this subsection we extend some multiple of this line bundle over ${\AGcpt}$. 
This is an explicit form of Mumford's extension \cite{Mu}. 
We proceed in two steps: 
first extend $L$ from ${\BI}$ to $\mathcal{B}_{I}^{\Sigma}$ for each $I$; then 
for some $k$, $L^{\otimes k}$ descends from $\sqcup_{I}\mathcal{B}_{I}^{\Sigma}$ to ${\AGcpt}$. 

As the first step, let $I$ be a primitive isotropic sublattice of $\Lambda$. 
We choose a maximal isotropic sublattice $J\subset \Lambda$ containing $I$. 
Fix an orientation of $I$ and $J$. 
Let $s_{J}$ be the frame of $L$ over ${\D}$ associated to $J$ 
(see \S \ref{ssec: factor of automorphy}).  
Since $s_{J}$ is invariant under ${\UIZ}\subset U(J)_{{\Z}}$, 
it descends to a frame of $L$ over ${\BI}={\D}/{\UIZ}$ 
which we again denote by $s_{J}$. 
Then there exists a unique extension of $L$ to a line bundle over $\mathcal{B}_{I}^{\Sigma}$, 
denoted again by $L$,  such that $s_J$ extends to its frame. 
A section $s$ of $L^{\otimes k}$ over ${\BI}$ 
extends holomorphically over $\mathcal{B}_{I}^{\Sigma}$ 
if and only if the function $s/s_{J}^{\otimes k}$ over ${\BI}$ 
extends holomorphically over $\mathcal{B}_{I}^{\Sigma}$.

\begin{proposition}\label{lem: well-defined extension of L}
The extension of $L$ defined above is independent of the choice of $J$ up to isomorphism. 
Moreover, the equivariant action of ${\GIZbar}$ on $L$ over ${\BI}$ 
extends to an equivariant action over $\mathcal{B}_{I}^{\Sigma}$. 
\end{proposition}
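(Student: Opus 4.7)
The plan is to reduce both assertions to a single observation: for any maximal isotropic sublattice $J\supset I$, the frame $s_{J}$ on ${\D}$ is invariant under the \emph{full complex group} ${\UIC}$, not merely its lattice ${\UIZ}$. Indeed, by Lemma \ref{prop: structure WIQ} every $u\in{\UIC}$ is a product of transvections $T_{l,l'}$ with $l,l'\in I_{\mathbb{C}}$, and the formula $T_{l,l'}(v)=v+(l,v)l'+(l',v)l$ shows that $u$ acts trivially on $I^{\perp}_{\mathbb{C}}$. Any maximal isotropic $J\supset I$ lies in $I^{\perp}$, so $u$ fixes $J$ pointwise and hence fixes $\det J\in\bigwedge^{g}\Lambda_{\mathbb{C}}$. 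Symplectic invariance of $u$ together with the defining equation $(s_{J}([V]),\det J)=1$ then forces $u\cdot s_{J}=s_{J}$. In particular ${\UIZ}$ acts trivially on $L|_{{\D}}$, so the equivariant action of ${\GIZ}$ already descends to the action of ${\GIZbar}$ on $L|_{{\BI}}$.

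For the independence statement, fix two oriented maximal isotropic sublattices $J_{1},J_{2}\supset I$ and consider the nowhere-vanishing holomorphic function $f=s_{J_{1}}/s_{J_{2}}$ on ${\D}$. The observation above gives ${\UIC}$-invariance of $f$, so it descends along ${\D}\to{\VI}$ to a nowhere-vanishing holomorphic function on ${\VI}$. Pulling back through the projection $\mathcal{T}_{I}^{\Sigma}\to{\VI}$ of the relative torus embedding produces a nowhere-vanishing holomorphic function on $\mathcal{T}_{I}^{\Sigma}$; its restriction to $\mathcal{B}_{I}^{\Sigma}$ gives the required isomorphism between the two extensions of $L$ defined by $J_{1}$ and $J_{2}$.

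For the equivariance, let $\gamma\in{\GIZ}$. Since $\gamma I=I$, $\gamma J$ is again a maximal isotropic sublattice containing $I$; orienting it by $\gamma\det J=\det\gamma J$ and using that $\gamma$ is symplectic, a direct computation yields $\gamma\cdot s_{J}([V])=s_{\gamma J}([\gamma V])$. The factor of automorphy of $\gamma$ in the trivialization by $s_{J}$ is therefore $j(\gamma,[V])=s_{\gamma J}([\gamma V])/s_{J}([\gamma V])$, i.e., the composition of the extended $\gamma$-action on $\mathcal{B}_{I}^{\Sigma}$ (which exists because $\Sigma_{I}$ is ${\GIZ}$-admissible and hence preserved by $\gamma$) with the function $s_{\gamma J}/s_{J}$ on $\mathcal{B}_{I}^{\Sigma}$, the latter extending holomorphically and nonvanishingly by the previous paragraph. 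Hence the ${\GIZbar}$-action on $L|_{{\BI}}$ extends equivariantly over $\mathcal{B}_{I}^{\Sigma}$. The only nontrivial step is the first-paragraph observation that ${\UIC}$ fixes $s_{J}$; once this is in place, the rest follows formally from the fact that the group used to form the partial compactification acts trivially on the chosen frame.
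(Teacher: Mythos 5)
Your proof is correct, but it takes a genuinely different route from the paper's. The paper first establishes (its Lemma~\ref{lem: decomp L by Siegel dom real}) the decomposition $s_{J}=s_{I}\otimes\pi^{\ast}s_{J/I}$ coming from the exact sequence $0\to\pi^{\ast}E_{I}\to E\to E_{\pi_{1}}\to0$, so that $s_{J}/s_{J'}=\pi^{\ast}(s_{J/I}/s_{J'/I})$ is explicitly a pullback from ${\DLI}$; the independence and ${\GIZbar}$-equivariance then follow because the partial compactification is relative over ${\VI}$ (and one applies this with $J'=\gamma J$). You instead make the single observation that $s_{J}$ is fixed by the full complex group ${\UIC}$: since ${\UIQ}$ is by definition the kernel of ${\GIQ}\to{\rm GL}(I^{\perp}_{\mathbb{Q}})$, any $u\in{\UIC}$ fixes $J_{\mathbb{C}}\subset I^{\perp}_{\mathbb{C}}$ pointwise and hence preserves the normalization $(s_{J},\det J)=1$; so $s_{J_{1}}/s_{J_{2}}$ descends directly to ${\VI}$ without need of the sheaf-theoretic decomposition. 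The two arguments deliver the same intermediate fact --- the ratio is constant along the fibers of ${\D}\to{\VI}$ --- but your route is more elementary, resting only on symplectic invariance and the definition of ${\UIQ}$, while the paper's decomposition of $L$ and its frame is more structural and carries additional information ($s_{I}$ is a canonical relative frame, and the ratio descends further to ${\DLI}$, not just to ${\VI}$). Your handling of the equivariance is also fine: the normalization $\det\gamma J:=\gamma\det J$ together with symplectic invariance gives $\gamma\cdot s_{J}([V])=s_{\gamma J}([\gamma V])$, so the factor of automorphy is $(s_{\gamma J}/s_{J})\circ\gamma$, a composition of two maps that extend over $\mathcal{B}_{I}^{\Sigma}$. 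One small redundancy: invoking Lemma~\ref{prop: structure WIQ} and the transvection formula is unnecessary, since triviality of the ${\UIC}$-action on $I^{\perp}_{\mathbb{C}}$ is immediate from the definition of ${\UIQ}$ as a kernel.
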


In order to prove this, we consider a decomposition of $s_{J}$. 
Let 
\begin{equation*}
{\D} \hookrightarrow {\rm LG}(\mathcal{K}_{I}) 
\stackrel{\pi_1}{\to} {\VI} \stackrel{\pi_2}{\to}  {\DLI} 
\end{equation*}
be the Siegel domain realization with respect to $I$, 
and $\pi = \pi_{2} \circ \pi_{1}$. 
Let $E_I\to{\DLI}$ be the universal sub bundle over ${\DLI}$ and $L_I=\det E_I$. 
We have the frame $s_{J/I}$ of $L_{I}$ 
associated to the oriented, maximal isotropic sublattice $J/I$ of $\Lambda(I)$. 
On the other hand, 
we have the relative tautological bundle $E_{\pi_{1}}$ over ${\rm LG}(\mathcal{K}_{I})$ 
whose fiber over $[V]\in {\D}$ is $V/W$ where $W=V\cap I_{{\C}}^{\perp}$. 
Let $L_{\pi_{1}}=\det E_{\pi_{1}}|_{{\D}}$. 
We have the frame $s_{I}$ of $L_{\pi_{1}}$ defined by the condition 
$(s_{I}([V]), \: \det I)=1$, 
where 
$( \: , \: )$ is the pairing between 
$\det (V/W) \subset \bigwedge^{g'}(\Lambda(W))$ and 
$\det I_{W} \subset \bigwedge^{g'}(\Lambda(W))$ 
induced from the symplectic form on $\Lambda(W)$.  
  
\begin{lemma}\label{lem: decomp L by Siegel dom real}
We have a natural isomorphism  
$L\simeq L_{\pi_{1}}\otimes \pi^{\ast}L_I$. 
Under this isomorphism we have 
$s_{J}= s_{I} \otimes \pi^{\ast}s_{J/I}$.  
\end{lemma}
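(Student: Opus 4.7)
The plan is to realize the isomorphism $L \simeq L_{\pi_{1}} \otimes \pi^{\ast} L_{I}$ as the determinant of a short exact sequence of holomorphic bundles on $\D$, and then verify the section identity by a block-triangular pairing computation in compatible bases.

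First I would construct the exact sequence. On $\D$ let $W\subset E$ be the sub bundle whose fiber over $[V]$ is $V\cap I_{\C}^{\perp}$. This has constant rank $g''$: positivity of $i(\cdot,\bar\cdot)|_{V}$, combined with the reality and isotropy of $I$, forces $V\cap I_{\C}=0$ for every $[V]\in\D$, because for $v\in V\cap I_{\C}$ we have $\bar v\in I_{\C}$ too, hence $(v,\bar v)=0$. The quotient $E/W$ has fiber $V/W$, which is the fiber of $E_{\pi_{1}}$; moreover the map $W\to \pi^{\ast}E_{I}$ induced by the projection $I_{\C}^{\perp}\to\LIC$ is fiberwise injective (because $W\cap I_{\C}=0$) and hence an isomorphism by dimension count. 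Taking determinants of
\begin{equation*}
0 \to W \to E \to E_{\pi_{1}}|_{\D} \to 0
\end{equation*}
and using $\det W \simeq \pi^{\ast}L_{I}$ yields the desired natural isomorphism $L\simeq L_{\pi_{1}}\otimes \pi^{\ast}L_{I}$.

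Second I would verify $s_{J}=s_{I}\otimes \pi^{\ast}s_{J/I}$ pointwise. Choose an oriented basis $e_{1},\dots,e_{g'}$ of $I$ and extend to an oriented basis $e_{1},\dots,e_{g}$ of $J$, so that the images $\bar e_{g'+1},\dots,\bar e_{g}$ in $\Lambda(I)$ form the induced oriented basis of $J/I$. For $[V]\in\D$, pick any basis $v_{1},\dots,v_{g}$ of $V$ with $v_{g'+1},\dots,v_{g}$ a basis of $W$. Since $v_{i}\in W\subset I^{\perp}$ for $i\geq g'+1$ and $e_{j}\in I$ for $j\leq g'$, the pairing matrix $M=((v_{i},e_{j}))$ is block upper triangular with diagonal blocks $M_{1}=((v_{i},e_{j}))_{i,j\leq g'}$ and $M_{2}=((v_{g'+i},e_{g'+j}))_{i,j\leq g''}$. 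By construction of the induced symplectic forms on $\Lambda(W)$ and $\Lambda(I)$, the normalizing denominators in $s_{I}([V])$ and $s_{J/I}(\pi[V])$ are $\det M_{1}$ and $\det M_{2}$ respectively, while that of $s_{J}([V])$ is $\det M$. Under the isomorphism above, $v_{1}\wedge\cdots\wedge v_{g}$ corresponds to $((v_{1}\wedge\cdots\wedge v_{g'})\bmod W)\otimes (\bar v_{g'+1}\wedge\cdots\wedge \bar v_{g})$, so the identity $s_{J}=s_{I}\otimes \pi^{\ast}s_{J/I}$ reduces to $\det M=\det M_{1}\cdot \det M_{2}$, which holds by the block-triangular form.

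The main obstacle will be bookkeeping: ensuring the orientation conventions on $I$, $J$, $J/I$ are chosen consistently so that the identity holds on the nose rather than up to a sign, and checking that the induced symplectic forms on $\Lambda(W)$ and on $\LIC$ restrict to the symplectic form on $\Lambda_{\C}$ for exactly the vectors appearing in $M_{1}$ and $M_{2}$. Both are routine once set up carefully, but they are the only places where a sign or normalization error could creep in.
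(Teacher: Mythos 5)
Your proposal is correct and follows essentially the same route as the paper: both produce the isomorphism $L\simeq L_{\pi_1}\otimes\pi^{\ast}L_I$ by taking determinants of the exact sequence $0\to V\cap I_{\C}^{\perp}\to V\to V/(V\cap I_{\C}^{\perp})\to 0$ (the paper identifies the sub bundle directly with $\pi^{\ast}E_I$, exactly as your injectivity-plus-dimension-count argument does), and both verify the section identity by showing that the pairing of $s_I\otimes\pi^{\ast}s_{J/I}$ against $\det J$ factors as $(s_I,\det I)\cdot(s_{J/I},\det(J/I))=1$, which determines $s_J$ by its defining normalization. Your block-triangular matrix computation is simply the explicit, basis-level unpacking of the paper's one-line factorization of the pairing, and your remark about consistent orientations on $I$, $J$, $J/I$ is precisely the normalization point the paper handles implicitly by choosing compatible orientations.
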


\begin{proof}
By varying the exact sequence of vector spaces 
\begin{equation*}
0 \to V\cap I^{\perp}_{{\C}} \to V \to V/(V\cap I^{\perp}_{{\C}}) \to 0 
\end{equation*}
over ${\D}$, we obtain the exact sequence of vector bundles 
\begin{equation*}\label{eqn: filtration of E by I}
0 \to \pi^{\ast}E_I \to E \to E_{\pi_{1}} \to 0. 
\end{equation*} 
This shows that   
$L\simeq L_{\pi_{1}}\otimes \pi^{\ast}L_I$. 
Since 
\begin{equation*}
(s_{I} \otimes \pi^{\ast}s_{J/I}, \: \det J) = 
(s_{I}, \det I) \cdot (s_{J/I}, \det (J/I))  = 1,  
\end{equation*}
we have $s_{J}= s_{I} \otimes \pi^{\ast}s_{J/I}$. 
\end{proof}


\begin{proof}[(Proof of Proposition \ref{lem: well-defined extension of L})] 
If $J'\supset I$ is another maximal isotropic sublattice, 
then 
$s_{J}/s_{J'}=\pi^{\ast}(s_{J/I}/s_{J'/I})$ 
is the pullback of a nowhere vanishing function on ${\DLI}$. 
Since the partial compactification ${\BI}\hookrightarrow \mathcal{B}_{I}^{\Sigma}$ 
is done relatively over ${\VI}$, 
$s_{J}/s_{J'}$ extends to a nowhere vanishing function on $\mathcal{B}_{I}^{\Sigma}$. 
This shows the independence of the extension from $J$. 
If we consider $J'=\gamma J$ for $\gamma\in{\GIZ}$, 
this also implies the second assertion. 
\end{proof}


We consider the collection of these extended line bundles over 
the whole $\sqcup_{I}\mathcal{B}_{I}^{\Sigma}$ 
and denote it again by $L$. 
The ${\G}$-action on $L$ over $\sqcup_{I}\mathcal{B}_{I}$ 
extends over $\sqcup_{I}\mathcal{B}_{I}^{\Sigma}$ 
by Proposition \ref{lem: well-defined extension of L}. 
Furthermore, 
if $p\colon \mathcal{B}_{I}^{\Sigma} \to \mathcal{B}_{J}^{\Sigma}$ 
is the etale map for $I\subset J$ as in Lemma \ref{lem: glue}, 
the isomorphism 
$p^{\ast}(L|_{\mathcal{B}_{J}}) \simeq L|_{{\BI}}$ 
over ${\BI}$ 
extends over $\mathcal{B}_{I}^{\Sigma}$, 
because we can use a common frame $s_{K}$ for the extension over both 
$\mathcal{B}_{I}^{\Sigma}$ and $\mathcal{B}_{J}^{\Sigma}$ 
where $K$ is maximal with $K\supset J\supset I$. 

\begin{lemma}[cf.~\cite{Mu}]\label{lemma:Koecher toroidal}
A modular form $F$ of weight $k$, 
as a section of $L^{\otimes k}$ over ${\BI}$, 
extends holomorphically over $\mathcal{B}_{I}^{\Sigma}$. 
$F$ is a cusp form if and only if  
it vanishes at the boundary divisor of $\mathcal{B}_{I}^{\Sigma}$ for all $I$. 
\end{lemma}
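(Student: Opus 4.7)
My plan is to derive this as a toroidal version of the Koecher principle, following Mumford \cite{Mu} but phrased in the intrinsic Siegel domain setup of \S\ref{sec: Siegel domain}. I would fix a primitive isotropic sublattice $I\neq\{0\}$ and choose a maximal isotropic $J\supset I$ with compatible orientations; the associated frame $s_J$ of $L$ is $U(J)_{\mathbb Z}$-invariant, hence in particular $\UIZ$-invariant, so on $\BI$ I may write $F = f\cdot s_J^{\otimes k}$ with $f$ a $\UIZ$-invariant holomorphic function on $\mathcal{D}$.

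The key step is to Fourier expand $f$ along the $T_I$-fibers of $\TI\to\VI$. Writing
$$ f \; = \; \sum_{\chi\in \UIZ^{\vee}} \phi_\chi \cdot e^{2\pi i \chi} $$
with $\phi_\chi$ holomorphic on $\VI$ (locally), I would then invoke the classical Koecher principle (cf.~\cite{Fre}): the $\GIZ$-equivariance of $F$ together with boundedness of $f$ on suitable slices of $\mathcal{D}$ forces $\phi_\chi = 0$ unless $\chi(Q)\geq 0$ for every $Q\in\mathcal{C}_I^{\ast}$. Since the support of $\Sigma_I$ equals $\mathcal{C}_I^{\ast}$, every ray of $\Sigma_I$ lies in $\mathcal{C}_I^{\ast}$, so each character $e^{2\pi i \chi}$ in the surviving expansion extends holomorphically across every boundary stratum of the relative torus embedding $\TI^{\Sigma}$. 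Normal convergence of the Fourier series then extends $f$, and hence $F = f\cdot s_J^{\otimes k}$, holomorphically to $\BI^{\Sigma}$.

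For the cusp form criterion, I would examine $F|_{\Delta_Q}$ for each primitive ray $Q\in\Sigma_I$. Taking $\chi\in\UIZ^{\vee}$ with $\chi(Q) = 1$ as normal parameter, only Fourier terms with $\eta(Q) = 0$ survive in the restriction. When $\sigma = \mathbb{R}_{\geq 0}Q\in\Sigma_I^{\circ}$, i.e.~$Q\in\mathcal{C}_I$ is positive definite on $I_{\mathbb R}^{\vee}$, the conditions $\eta(Q) = 0$ and $\eta\geq 0$ on $\mathcal{C}_I^{\ast}$ together force $\eta = 0$; hence $F|_{\Delta_Q}$ reduces to $\phi_0 \cdot s_J^{\otimes k}|_{\Delta_Q}$, and using the decomposition $s_J = s_I\otimes \pi^{\ast}s_{J/I}$ from Lemma \ref{lem: decomp L by Siegel dom real}, the constant term $\phi_0$ is exactly the image of $F$ under the Siegel $\Phi$-operator at the $I$-cusp, viewed as a modular form on $\DLI$. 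Since $F$ is a cusp form iff this $\Phi$-operator kills $F$ at every primitive isotropic $I\neq\{0\}$, the equivalence holds for all rays in $\Sigma_I^{\circ}$. For the remaining rays of $\Sigma_I$, which lie in some $\mathcal{C}_{I'}$ with $I'\subsetneq I$, Lemma \ref{lem: glue} identifies the corresponding boundary divisor of $\BI^{\Sigma}$ with a boundary divisor of $\mathcal{B}_{I'}^{\Sigma}$ coming from a ray in $\Sigma_{I'}^{\circ}$, so vanishing there is already subsumed by the condition for $I'$.

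The main obstacle I anticipate is a clean justification of the Koecher restriction $\chi\in(\mathcal{C}_I^{\ast})^{\vee}$ for an arbitrary finite-index $\Gamma<{\Sp}$ and an arbitrary, not necessarily maximal, primitive isotropic $I$: the available arithmetic action on $\UIZ^{\vee}$ is only by a finite-index subgroup of $\mathrm{GL}(I_{\mathbb Z})$ sitting inside $\Gamma_I$, so I must verify that this suffices to run Mumford's orbit-sum/boundedness argument. Once that is in hand, the rest is bookkeeping with local coordinates on the torus embedding and the Siegel domain.
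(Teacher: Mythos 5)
Your outline runs parallel to the paper's, but you carry the Fourier-theoretic argument out at an arbitrary primitive isotropic $I$, and the obstacle you flag at the end --- justifying a Koecher-type restriction at a non-maximal cusp for an arbitrary finite-index $\Gamma$ --- is genuine. The paper sidesteps it entirely by first reducing to the case $I$ maximal, via the etale gluing maps $\mathcal{B}_I^{\Sigma_I} \to \mathcal{B}_J^{\Sigma_J}$ of Lemma~\ref{lem: glue} and the compatibility of the extended $L$ under these maps established just before the statement of the lemma. Once $I$ is maximal, $\mathcal{V}_I$ is a point, the Fourier coefficients are scalars, the classical Koecher principle applies verbatim, and the extension over a non-maximal $\mathcal{B}_I^{\Sigma}$ is then obtained simply by pullback along the gluing map. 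Inserting this reduction as the opening step closes your proof without your having to touch the relative Koecher question at all.

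For the cusp-form equivalence your route is genuinely different: you restrict $F$ to $\Delta_Q$, identify the surviving constant term with the Siegel $\Phi$-operator image of $F$ using the factorization $s_J = s_I \otimes \pi^{\ast}s_{J/I}$ from Lemma~\ref{lem: decomp L by Siegel dom real}, and dispose of rays of $\Sigma_I$ lying in $\mathcal{C}_{I'}^{\ast}$ for $I' \subsetneq I$ by gluing down to $\mathcal{B}_{I'}^{\Sigma}$. The paper argues more directly at maximal $I$: a semi-positive but not positive-definite $\chi$ lies in $Q^{\perp}$ for some ray $Q$ of $\Sigma_I$, and $Q^{\perp}\cap U(I)_{\mathbb{Z}}^{\vee}$ is the character lattice of $\Delta_Q$, so vanishing of all such Fourier coefficients $a_\chi$ is precisely vanishing of $F$ along the boundary divisor. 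Your $\Phi$-operator formulation is correct and conceptually cleaner, but the bookkeeping simplifies once $I$ is maximal; both versions work provided the reduction step is in place.
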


\begin{proof}
By the above gluing, we may assume that $I$ is maximal. 
We identify $F$ with a function on $\mathcal{B}_{I}$ via the frame $s_{I}^{\otimes k}$, 
which has Fourier expansion 
\begin{equation*}
F = 
\sum_{\chi \in U(I)_{{\Z}}^{\vee}} a_{\chi} e^{\chi}, \qquad 
e^{\chi} = {\exp}(2\pi i \chi( \cdot )). 
\end{equation*}
By the Koecher principle, 
we have $a_{\chi}\ne0$ only when 
$\chi$ is semi positive definite. 
Then $\chi(Q)\geq0$ for every ray ${\R}_{\geq0}Q$ in $\Sigma_{I}$, 
so $e^{\chi}$ extends holomorphically over $\mathcal{B}_{I}^{\Sigma}$ for such $\chi$. 
This proves the first assertion. 

By definition, $F$ is a cusp form if and only if 
$a_{\chi}\ne 0$ only for positive definite $\chi$ at all maximal $I$. 
Since $\chi$ is strictly semi positive definite if and only if 
$\chi \in Q^{\perp}$ for some ray ${\R}_{\geq 0}Q\in \Sigma_{I}$,  
the cuspidal condition is equivalent to 
$a_{\chi}=0$ for all $\chi\in Q^{\perp}$ 
for every ray ${\R}_{\geq 0}Q\in \Sigma_{I}$ at every maximal $I$. 
Since $Q^{\perp}\cap U(I)_{{\Z}}^{\vee}$ is the character group of 
the boundary torus associated to ${\R}_{\geq 0}Q$, 
this is equivalent to the vanishing of $F$ at the boundary of $\mathcal{B}_{I}^{\Sigma}$ 
for every maximal $I$. 
\end{proof}

We choose a natural number $k$ such that 
for every $I$, $x\in \mathcal{B}_{I}^{\Sigma}$, $\gamma\in{\GIZ}$ with $\gamma(x)=x$, 
$\gamma$ acts trivially on $L_{x}^{\otimes k}$. 
Then the line bundle $L^{\otimes k}$ over $\sqcup_{I}\mathcal{B}_{I}^{\Sigma}$ 
descends to a line bundle over 
${\AGcpt}=(\sqcup_{I}\mathcal{B}_{I}^{\Sigma})/\sim$. 
This will be denoted as $L^{\otimes k}$ by abuse of notation 
(for $L$ might not exist as a line bundle over ${\AGcpt}$). 
By Lemma \ref{lemma:Koecher toroidal}, 
we have  
$H^{0}({\AGcpt}, L^{\otimes k})\simeq M_{k}({\G})$.


\section{Asymptotic estimate of Petersson norm}\label{sec: boundary estimate} 

Let ${\AGcpt}$ be a toroidal compactification of ${\AG}$. 
Let $J$ be a maximal isotropic sublattice of $\Lambda$ and  
${\R}_{\geq0}Q$ be a ray in $\Sigma_{J}$. 
Let $\Delta_{Q}=\Delta_{Q,J}$ be the corresponding boundary stratum of $\mathcal{B}_{J}^{\Sigma}$. 
The image of $\Delta_{Q}$ in ${\AGcpt}$ is 
a Zariski open set of an irreducible component of the boundary divisor of ${\AGcpt}$. 
In this section we prepare an asymptotic estimate of the Petersson norm of 
a local modular form as the period approaches $\Delta_{Q}$.  

We choose $\chi\in U(J)_{{\Z}}^{\vee}$ with $\chi(Q)=1$. 
Recall that $q={\exp}(2\pi i\chi( \cdot ))$ gives a normal parameter around $\Delta_{Q}$. 
%
We take an arbitrary point $x$ of $\Delta_{Q}$ 
and a small neighborhood $\Delta_{x}$ of $x$ in $\Delta_{Q}$. 
Let $T_{r} \subset \mathcal{B}_{J}^{\Sigma}$ 
be the tubular neighborhood of $\Delta_{x}$ of radius $r$, 
defined by $|q|\leq r$. 
We fix a sufficiently small $0<R \ll 1$ and set  
$W_{\varepsilon} = T_{R}-T_{\varepsilon}$ 
for $0<\varepsilon <R$, 
which is the annulus bundle around $\Delta_{x}$ of radius $[\varepsilon, R]$. 
We want to give an asymptotic estimate of 
\begin{equation*}
\int_{W_{\varepsilon}} (F, F)_{k}^{\beta}{\volD} \qquad (\varepsilon \to 0) 
\end{equation*}
for $F$ a local section of $L^{\otimes k}$ defined around $x$ and $\beta>0$. 
We first compute the asymptotic behavior of the Petersson metric on $L$. 

\begin{lemma}\label{lem: asymptotic Petersson metric}
Let $s_{J}$ be the distinguished frame of $L$ associated to $J$.  
Around each point $x$ of $\Delta_{Q}$, we have  
\begin{equation*}
( s_{J} , s_{J} )_{1} \sim C_{x} \cdot (-\log |q|)^{{\rm rk}(Q)} \qquad (|q|\to 0) 
\end{equation*}
for some constant $C_{x}>0$, where 
${\rm rk}(Q)$ is the rank of $Q$ as a quadratic form. 
\end{lemma}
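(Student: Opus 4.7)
The plan is to reduce to the classical formula $(s_{J}, s_{J})_{1}=\det({\rm Im}\,\Omega)$ of Lemma \ref{lem: Petersson classical} and then track $\det({\rm Im}\,\Omega)$ as we approach a point $x$ of $\Delta_{Q}$. After shrinking $\Delta_{x}$ if necessary, I would embed the ray ${\R}_{\geq 0}Q$ into a top-dimensional cone $\sigma\in \Sigma_{J}$, generated by primitive rays $Q=Q_{1}, Q_{2}, \ldots, Q_{N}$, and let $\chi_{1}, \ldots, \chi_{N}\in U(J)^{\vee}_{{\Q}}$ be the dual basis. Locally near $x$, the functions $q_{i}=e^{\chi_{i}}$ form holomorphic coordinates with $\Delta_{Q}=\{q_{1}=0\}$, and $q_{2}, \ldots, q_{N}$ extend to non-vanishing holomorphic functions. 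Since $\chi$ differs from $\chi_{1}$ by some $\chi''$ with $\chi''(Q)=0$, the function $\chi''({\rm Im}\,\Omega)$ is bounded near $x$, so $-\log|q|\sim 2\pi\,\chi_{1}({\rm Im}\,\Omega)$ as $|q|\to 0$.

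Next, I would write ${\rm Im}\,\Omega = t_{1}Q+Y_{0}$ with $t_{i}=\chi_{i}({\rm Im}\,\Omega)=-\log|q_{i}|/(2\pi)$ and $Y_{0}=\sum_{i\geq 2}t_{i}Q_{i}$. Along a path approaching $x$, the values $t_{2}, \ldots, t_{N}$ stay in a compact range determined by $x$, so $Y_{0}$ stays bounded, while $t_{1}\to +\infty$ with $t_{1}\sim -\log|q|/(2\pi)$. Setting $r={\rm rk}(Q)$ and choosing a basis of $J_{{\R}}^{\vee}$ in which $Q={\rm diag}(I_{r}, 0)$, decompose $Y_{0}=\begin{pmatrix}A & B \\ B^{t} & C\end{pmatrix}$ in the corresponding blocks. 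The Schur-complement identity then gives
\[
\det({\rm Im}\,\Omega) = \det\bigl(t_{1}I_{r}+A-BC^{-1}B^{t}\bigr)\cdot \det C \sim t_{1}^{r}\det C \qquad (t_{1}\to\infty).
\]
Combined with Lemma \ref{lem: Petersson classical} and $t_{1}\sim -\log|q|/(2\pi)$, this yields the claimed asymptotic with $C_{x}=(2\pi)^{-r}\det(C(x))$. Positivity of $C_{x}$ follows since ${\rm Im}\,\Omega$ is positive definite, so in particular its restriction to $\ker Q$ (which is $C$ in the limit) is positive definite.

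The main obstacle is organizational rather than computational: one must set up local toroidal coordinates near a generic point of $\Delta_{Q}$ so that as $|q|\to 0$ the matrix ${\rm Im}\,\Omega$ really moves to infinity along $Q$ with a bounded transverse component. This hinges on the description of the relative torus embedding $\mathcal{T}_{J}^{\Sigma}\supset \Delta_{Q}$ from \S\ref{ssec: relative torus embed} and the identification of the characters as $\chi\in U(J)_{{\Z}}^{\vee}$. Once this is set up, the rest is elementary linear algebra.
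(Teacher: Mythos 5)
Your proposal reduces the statement to Lemma~\ref{lem: Petersson classical} exactly as the paper does, and the Schur-complement computation of $\det({\rm Im}\,\Omega)\sim t_{1}^{\,{\rm rk}(Q)}\det C$ is the same determinant asymptotic that underlies the paper's claim. The difference is one of rigor and emphasis: the paper verifies the asymptotic only along the single normal flow $\Omega_{t}=\Omega_{0}+itQ$, leaving it implicit that this suffices by a continuity/compactness argument, whereas you set up honest local coordinates $q=q_{1},q_{2},\dots,q_{N}$ around a generic point of $\Delta_{Q}$ so that the whole family ${\rm Im}\,\Omega=t_{1}Q+Y_{0}$ visibly escapes to infinity along $Q$ with a bounded transverse part $Y_{0}$, yielding a uniform statement over a neighborhood of $x$. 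Since the lemma is subsequently applied to bound an integral over an entire annulus bundle $W_{\varepsilon}$ (both from above and below), the uniform version is actually what the application requires, and your write-up makes that explicit. One small caveat: you invoke a \emph{top-dimensional simplicial} cone of $\Sigma_{J}$ containing ${\R}_{\geq0}Q$; the paper does not assume the fan is simplicial, and no such cone is needed near a \emph{generic} point of $\Delta_{Q}$ — it suffices to complete $Q$ to any lattice basis $Q=Q_{1},\dots,Q_{N}$ of $U(J)_{{\Z}}$ (or equivalently to pick any $\chi_{2},\dots,\chi_{N}\in U(J)_{{\Z}}^{\vee}$ with $\chi_{i}(Q)=0$ extending $\chi$ to a dual basis), since only the ray ${\R}_{\geq0}Q$ governs the local structure there.
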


\begin{proof}
We choose a maximal isotropic subspace $J'_{{\Q}}$ of $\Lambda_{{\Q}}$ 
such that $\Lambda_{{\Q}}=J_{{\Q}}\oplus J'_{{\Q}}$. 
Recall that this induces an isomorphism 
$\iota \colon {\D} \to \mathfrak{H}_{J}$. 
We identify ${\sym}J_{{\C}}$ with the space of 
$n\times n$ symmetric matrices by taking a basis of $J_{{\Q}}$. 
By Lemma \ref{lem: Petersson classical}, 
if $\Omega=\iota([V])$ for $[V]\in{\D}$, then 
\begin{equation*}
(s_{J}([V]), s_{J}([V]))_{1} = \det ({\rm Im} \, \Omega). 
\end{equation*}

We pick up a point $\Omega_{0}\in \mathfrak{H}_{J}$ 
and consider the flow $\Omega_{t}=\Omega_{0}+itQ$ in $\mathfrak{H}_{J}$, 
where $t\in{\R}_{>0}$. 
The image of $\Omega_{t}$ in $\mathcal{B}_{J}$ converges to a point of $\Delta_{Q}$, 
say $x$, from the normal direction. 
Then 
\begin{equation*}
q = {\exp}(2\pi i \chi(\Omega_{t})) 
= {\exp}(2\pi i\chi(\Omega_{0})) \cdot {\exp}(-2\pi t), 
\end{equation*}
so we have 
\begin{equation*}
t \sim (-2\pi)^{-1} \log |q| \qquad (t\to +\infty). 
\end{equation*}
This shows that 
\begin{equation*}
\det({\rm Im}\Omega_{t}) 
\sim C \cdot t^{{\rm rk}(Q)} 
\sim C \cdot (-\log |q|)^{{\rm rk}(Q)} \qquad 
(t\to +\infty),  
\end{equation*}
where $C$ stands for any unspecified positive constant. 
\end{proof}

Note that if $Q$ belongs to $\mathcal{C}_{I}\subset \mathcal{C}_{J}^{+}$ for $I\subset J$, 
then ${\rm rk}(Q)={\rm rk}(I)$. 
Our main result of this section is the following. 

\begin{proposition}\label{prop: asymptotic Petersson norm}
Let $F$ be a local section of $L^{\otimes k}$ defined over 
a neighborhood of $x\in \Delta_{Q}\subset \mathcal{B}_{J}^{\Sigma}$. 
Let $\beta>0$ be a positive real number. 
Then  
\begin{equation*}
\int_{W_{\varepsilon}} (F, F)_{k}^{\beta}{\volD} = o(\varepsilon^{-\alpha}) 
\qquad (\varepsilon\to 0) 
\end{equation*}
for every $\alpha>0$. 
Moreover, 
when $k\beta \geq g+1$, 
$F$ vanishes at $\Delta_{Q}$ 
if and only if  
\begin{equation*}
\int_{W_{\varepsilon}} (F, F)_{k}^{\beta}{\volD} = O(1) 
\qquad (\varepsilon\to 0). 
\end{equation*}
\end{proposition}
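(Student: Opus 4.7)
The plan is to compute the integrand explicitly in local torus coordinates around $x$ and reduce the estimate to a classical one-variable integral in the normal direction.

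First, I would set up coordinates. Since $J$ is maximal, $\mathcal{B}_{J}=\mathfrak{H}_{J}/U(J)_{{\Z}}$ sits naturally inside the torus $T_{J}=U(J)_{{\C}}/U(J)_{{\Z}}$. Choosing $\chi\in U(J)_{{\Z}}^{\vee}$ with $\chi(Q)=1$ (possible by primitivity of $Q$) and extending to a basis $\chi=\chi_{1},\chi_{2},\dots,\chi_{N}$ of $U(J)_{{\Z}}^{\vee}$ with $\chi_{i}(Q)=0$ for $i\geq 2$ yields a normal coordinate $q=\exp(2\pi i\chi)$ to $\Delta_{Q}$ and tangential coordinates $q_{i}=\exp(2\pi i\chi_{i})$ on the boundary torus $\Delta_{Q}$.

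Next, I would combine the existing computations. Trivializing $F=f\cdot s_{J}^{\otimes k}$ with $f$ holomorphic near $x$, one has $(F,F)_{k}=|f|^{2}(s_{J},s_{J})_{1}^{k}$. Lemmas \ref{lem: Petersson classical} and \ref{lem: volD classical} give that $\volD$ is a constant multiple of $(s_{J},s_{J})_{1}^{-(g+1)}{\rm vol}_{J}$, while ${\rm vol}_{J}$, being translation-invariant on $\sym J_{{\C}}$, is a constant multiple of $\bigwedge_{i}\frac{dq_{i}\wedge d\bar{q}_{i}}{|q_{i}|^{2}}$ on $T_{J}$. Combined with Lemma \ref{lem: asymptotic Petersson metric}, this yields
\begin{equation*}
(F,F)_{k}^{\beta}\,\volD \;\asymp\; |f|^{2\beta}\bigl(-\log|q|\bigr)^{M}\,\frac{dq\wedge d\bar{q}}{|q|^{2}}\wedge \bigwedge_{i\geq 2}\frac{dq_{i}\wedge d\bar{q}_{i}}{|q_{i}|^{2}}
\end{equation*}
as $|q|\to 0$, where $M={\rm rk}(Q)(k\beta-(g+1))$.

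After shrinking $\Delta_{x}$ so that each $|q_{i}|$ ($i\geq 2$) stays bounded above and below on $\overline{\Delta_{x}}$, the tangential factor becomes a bounded measure and $|f|$ is bounded on $T_{R}$. Polar coordinates in $q$ reduce the $W_{\varepsilon}$-integral, up to a bounded multiplicative factor, to $\int_{\varepsilon}^{R}(-\log r)^{M}\frac{dr}{r}$; the substitution $t=-\log r$ converts this into $\int_{-\log R}^{-\log\varepsilon}t^{M}dt$, which is $O((\log(1/\varepsilon))^{M+1})$, $O(\log\log(1/\varepsilon))$, or $O(1)$ according as $M>-1$, $M=-1$, or $M<-1$; each is $o(\varepsilon^{-\alpha})$ for every $\alpha>0$, giving the first assertion. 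For the moreover clause, assume $k\beta\geq g+1$ so $M\geq 0$: if $F$ vanishes on $\Delta_{Q}$ then $f=q\cdot h$ with $h$ bounded holomorphic, so the normal integrand becomes $r^{2\beta-1}(\log(1/r))^{M}$, integrable on $(0,R]$, giving $O(1)$; conversely, if $f(x_{0})\neq 0$ at some $x_{0}\in\Delta_{Q}$ near $x$, then $|f|^{2\beta}\geq c>0$ on a subneighborhood and the same computation now gives a lower bound $\sim(\log(1/\varepsilon))^{M+1}\to\infty$, contradicting $O(1)$. The main obstacle I anticipate is the clean identification of $\volD$ near $\Delta_{Q}$ with the torus-invariant measure in the coordinates $q,q_{i}$ with uniform constants over the compact slice $\overline{\Delta_{x}}$; this is exactly where the maximality of $J$ (making the Siegel realization trivial so $\mathcal{B}_{J}\subset T_{J}$) together with Lemmas \ref{lem: Petersson classical}, \ref{lem: volD classical}, \ref{lem: asymptotic Petersson metric} does all the essential work, after which the estimate reduces to a classical one-variable Fourier-type computation.
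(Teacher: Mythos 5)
Your proposal is correct and follows essentially the same route as the paper's proof: both trivialize $F$ by the frame $s_{J}^{\otimes k}$, invoke Lemmas \ref{lem: Petersson classical}, \ref{lem: volD classical}, and \ref{lem: asymptotic Petersson metric} to reduce the integral to $\int_{\varepsilon}^{R}|\log r\,|^{{\rm rk}(Q)(k\beta-g-1)}r^{-1}dr$ via the normal coordinate $q$, and then estimate this one-variable integral (the paper via the observation $\log r=o(r^{-\alpha'})$, you via the substitution $t=-\log r$, which are equivalent). Your explicit expansion of ${\rm vol}_{\Delta_{x}}$ in the tangential torus coordinates $q_{i}$ is slightly more detailed than the paper's but makes no material difference, and your handling of the ``moreover'' clause matches the paper's argument.
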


\begin{proof}
Via the frame $s_{J}^{\otimes k}$ 
we identify $F$ with a holomorphic function $F(\Omega)$ defined around $x$. 
Let ${\rm vol}_{J}$ be a flat volume form on ${\sym}J_{{\C}}$. 
By Lemmas \ref{lem: Petersson classical} and \ref{lem: volD classical}, 
we have 
\begin{equation*}
\int_{W_{\varepsilon}} (F, F)_{k}^{\beta}{\volD} = 
\int_{W_{\varepsilon}} 
|F(\Omega)|^{2\beta} \cdot \det ({\rm Im}\, \Omega)^{k\beta - g-1} {\rm vol}_{J}. 
\end{equation*}
Locally around $x$, we can write (up to constant) 
\begin{equation*}
{\rm vol}_{J} 
 = 
d\chi \wedge d\bar{\chi} \wedge {\rm vol}_{\Delta_{x}}  
 =  
|q|^{-2} dq \wedge d\bar{q} \wedge {\rm vol}_{\Delta_{x}}  
 =  
r^{-1} dr \wedge d\theta \wedge {\rm vol}_{\Delta_{x}}
\end{equation*}
for some volume form ${\rm vol}_{\Delta_{x}}$ on $\Delta_{x}\subset \Delta_{Q}$, 
where $q=re^{i\theta}$. 
Therefore 
\begin{equation*}
\int_{W_{\varepsilon}} (F, F)_{k}^{\beta}{\volD} = 
C \cdot \int_{\varepsilon}^{R} r^{-1}dr 
\int_{0}^{2\pi} d\theta 
\int_{\Delta_{x}} 
|F(\Omega)|^{2\beta} \cdot \det ({\rm Im} \, \Omega)^{k\beta-g-1} {\rm vol}_{\Delta_{x}}. 
\end{equation*} 
Since $F(\Omega)=O(1)$ as $r=|q|\to 0$, 
Lemma \ref{lem: asymptotic Petersson metric} implies that 
\begin{eqnarray*}
\int_{W_{\varepsilon}} (F, F)_{k}^{\beta}{\volD} 
& \leq & 
C \cdot \int_{\varepsilon}^{R} r^{-1}dr 
\int_{0}^{2\pi} d\theta 
\int_{\Delta_{x}} |\log r \, |^{g'(k\beta-g-1)} {\rm vol}_{\Delta_{x}} \\ 
& = & 
C \cdot \int_{\varepsilon}^{R} |\log r \, |^{g'(k\beta-g-1)} r^{-1} dr 
\end{eqnarray*}
where $C>0$ are some constants independent of $\varepsilon$ 
and $g'$ is the rank of $Q$. 
We have $\log r=o(r^{-\alpha'})$ for any $\alpha'>0$ as $r\to 0$. 
Hence  
\begin{equation*}
| \log r \, |^{g'(k\beta-g-1)} r^{-1} = o(r^{-1-\alpha}) \qquad (r\to 0)  
\end{equation*}
for any $\alpha>0$. 
It follows that 
\begin{equation*}
\int_{\varepsilon}^{R} | \log r \, |^{g'(k\beta-g-1)} r^{-1} dr  =o(\varepsilon^{-\alpha}) 
\qquad 
(\varepsilon\to 0),  
\end{equation*} 
which proves the first assertion. 

When $F|_{\Delta_Q}\not\equiv 0$, 
this calculation also shows that 
\begin{equation*}
\int_{W_{\varepsilon}} (F, F)_{k}^{\beta}{\volD} 
\: \geq \:   
C' \cdot \int_{\varepsilon}^{R} | \log r \, |^{g'(k\beta-g-1)} r^{-1} dr + ({\rm const}) 
\end{equation*}
for some $C'>0$ independent of $\varepsilon \ll R$. 
When $k\beta\geq g+1$, the right hand side diverges as $\varepsilon \to 0$. 
On the other hand, 
when $F|_{\Delta_Q}\equiv 0$, we have 
$|F(\Omega)|^{2\beta}=O(r^{2\beta})$ and so 
\begin{equation*}
\int_{W_{\varepsilon}} (F, F)_{k}^{\beta}{\volD} 
\; \leq \; 
C \cdot \int_{\varepsilon}^{R} | \log r \, |^{g'(k\beta-g-1)} r^{-1+2\beta} dr  
\; \leq \; 
C \cdot \int_{\varepsilon}^{R} r^{\delta} dr
\end{equation*}
for some $\delta>-1$. 
Therefore 
$\int_{W_{\varepsilon}} (F, F)_{k}^{\beta}{\volD}$ 
converges in this case. 
\end{proof} 

Note that the ``only if'' direction in the second assertion 
holds with no restriction on $k\beta$. 

%


\section{$L^{2/m}$ criterion}\label{sec: L^{2/m} criterion} 

This section is independent of the previous sections. 
We prepare a general criterion for the pole order of a pluricanonical form 
in terms of the asymptotic behavior of its integral. 
This will be used in \S \ref{sec: proof 1.2} and \S \ref{sec: Proof 1.3}.

\subsection{$L^{2/m}$ norm of $m$-canonical forms}

Let $U$ be a complex manifold of dimension $N$, 
and $\omega$ a (holomorphic) $m$-canonical form on $U$. 
We define the $L^{2/m}$ norm of $\omega$ as follows. 
Let $\bar{\omega}$ be the complex conjugate of $\omega$. 
After a constant multiple, 
$\omega\wedge\bar{\omega}$ gives a real, nonnegative $C^{\infty}$ section of 
the real line bundle $(\bigwedge^{2N}\Omega_{U, {\R}})^{\otimes m}$, 
where $\Omega_{U, {\R}}$ is the real cotangent bundle of $U$. 
To be more precise, 
if we locally write 
$\omega=f(z)(dz_{1}\wedge \cdots \wedge dz_{N})^{\otimes m}$ 
with $z_{\alpha}=x_{\alpha}+iy_{\alpha}$, then 
\begin{eqnarray*}
\omega \wedge \bar{\omega} 
& = &  
|f(z)|^{2}  
(dz_{1}\wedge \cdots \wedge dz_{N} \wedge 
d\bar{z}_{1}\wedge \cdots \wedge d\bar{z}_{N})^{\otimes m} \\ 
& = & 
i^{N(N-2)m} |f(z)|^{2} 
(dx_{1}\wedge dy_{1} \wedge \cdots \wedge dx_{N}\wedge dy_{N})^{\otimes m}. 
\end{eqnarray*}
Globally, 
if we choose a volume form ${\rm vol}_{U}$ on $U$, 
we can write 
\begin{equation*}
\omega \wedge \bar{\omega} = 
i^{N(N-2)m} \varphi(z) \: {\rm vol}_{U}^{\otimes m} 
\end{equation*}
for some real, nonnegative $C^{\infty}$ function $\varphi(z)$ on $U$. 
We put 
$||\omega ||^{2} = \varphi(z)  {\rm vol}_{U}^{\otimes m}$ 
and define its $m$-th root by 
\begin{equation*}
||\omega||^{2/m} = \sqrt[m]{\varphi(z)} \: {\rm vol}_{U}. 
\end{equation*}
Then $||\omega||^{2/m}$ 
is a real, nonnegative, continuous $(N, N)$ form on $U$ 
which is $C^{\infty}$ outside the zero divisor of $\omega$. 
This definition does not depend on the choice of ${\rm vol}_{U}$. 
The integral 
$\int_{U} ||\omega||^{2/m}$ 
is the norm we want to look at. 

\subsection{Criterion for pole order}\label{ssec: L2 criterion}

Now let $X$ be a complex manifold and 
$\Delta \subset X$ be a smooth irreducible divisor. 
We take a normal parameter of $\Delta$ and   
denote by $T_{r}$ the tubular neighborhood of $\Delta$ of radius $r$. 
We fix a sufficiently small $0< R \ll 1$. 
For $0<\varepsilon < R$ we set 
$U_{\varepsilon}=T_{R} - T_{\varepsilon}$, 
which is the annulus bundle of radius $[\varepsilon, R]$ around $\Delta$. 
Let $\omega$ be an $m$-canonical form on $X-\Delta$. 
Our purpose is to relate the pole order of $\omega$ along $\Delta$ 
to the asymptotic behavior of the integral 
\begin{equation*}
\int_{U_{\varepsilon}} || \omega ||^{2/m} \qquad (\varepsilon \to 0). 
\end{equation*}

Since the problem is local, we shall assume that 
$X$ is a polydisc in ${\C}^{N}$, with coordinate $(z_{1}, \cdots , z_{N})$, 
$\Delta$ is defined by $z_{1}=0$, and 
$T_{r}$ is given by $|z_{1}|<r$. 
Then 
$U_{\varepsilon}$ is defined by $\varepsilon\leq |z_{1}| \leq R$. 
We can express 
$\omega=f(z)(dz_{1}\wedge \cdots \wedge dz_{N})^{\otimes m}$ 
for a holomorphic function $f(z)$ on $X-\Delta$. 
Then  
\begin{eqnarray*}
||\omega||^{2/m} 
& = & 
|f(z)|^{2/m} dx_{1}\wedge dy_{1} \wedge \cdots \wedge dx_{N} \wedge dy_{N} \\ 
& = & 
|f(z)|^{2/m} r dr \wedge d\theta \wedge 
dx_{2}\wedge dy_{2} \wedge \cdots \wedge dx_{N} \wedge dy_{N}
\end{eqnarray*}
where 
$z_{\alpha}=x_{\alpha}+iy_{\alpha}$ and 
$z_{1}=re^{i\theta}$. 
So its integral over $U_{\varepsilon}$ is expressed as 
\begin{equation}\label{eqn: integral Ue}
\int_{U_{\varepsilon}} || \omega ||^{2/m} = 
\int_{\varepsilon}^{R} rdr \int_{0}^{2\pi} d\theta 
\int_{\Delta} |f(z)|^{2/m} dx_{2}\wedge \cdots \wedge dy_{N}. 
\end{equation}
The function $f(z)$ is meromorphic over $X$ if and only if we can write 
$f(z)=g(z)/z_{1}^{\nu}$ 
for some $\nu\in{\Z}$ and a holomorphic function $g(z)$ on $X$ 
such that $g|_{\Delta}\not\equiv 0$. 
This $\nu$ is the pole order of $f$ (and of $\omega$) along $\Delta$. 

\begin{proposition}\label{prop: L2 criterion}
Let $\nu$ be the pole order of $\omega$ along $\Delta$. 

(1) We have $\nu \leq m$ if and only if 
$\int_{U_{\varepsilon}}||\omega||^{2/m} = o(\varepsilon^{-2/m})$. 

(2) We have $\nu \leq m-1$ if and only if 
$\int_{U_{\varepsilon}}||\omega||^{2/m} = O(1)$. 
\end{proposition}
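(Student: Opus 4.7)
The plan is to exploit the explicit formula \eqref{eqn: integral Ue} already derived in local polar coordinates. Writing $f(z)=g(z)/z_{1}^{\nu}$ with $g$ holomorphic on the polydisc and $g|_{\Delta}\not\equiv 0$, and substituting $z_{1}=re^{i\theta}$, we have $|f(z)|^{2/m}=|g(z)|^{2/m}r^{-2\nu/m}$, so the formula will factor as
\begin{equation*}
\int_{U_{\varepsilon}}||\omega||^{2/m} \; = \; \int_{\varepsilon}^{R} r^{1-2\nu/m}\,\Phi(r)\,dr,
\end{equation*}
where $\Phi(r)=\int_{0}^{2\pi}\!\int_{\Delta}|g(re^{i\theta},z_{2},\ldots,z_{N})|^{2/m}\,dx_{2}\wedge\cdots\wedge dy_{N}\, d\theta$.

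The next step will be to sandwich $\Phi(r)$. An upper bound $\Phi(r)\leq C$ is immediate from boundedness of $g$ on a compact neighborhood of the closure of $U_R$. For a lower bound $\Phi(r)\geq c>0$ uniform in small $r$, I will use $g|_{\Delta}\not\equiv 0$ to pick $z^{0}\in\Delta$ with $g(z^{0})\neq 0$, and shrink the polydisc around $z^{0}$ so that continuity of $g$ keeps $|g|\geq \tfrac{1}{2}|g(z^{0})|$ on a fixed-volume neighborhood meeting every fiber $\{z_{1}=re^{i\theta}\}$ for small $r$. The problem is thereby reduced to the elementary power integral $I(\varepsilon)=\int_{\varepsilon}^{R}r^{1-2\nu/m}\,dr$.

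I will then analyze the three regimes of $I(\varepsilon)$: (i) $\nu<m$ gives exponent $>-1$, hence $I(\varepsilon)=O(1)$; (ii) $\nu=m$ gives exponent $-1$, hence $I(\varepsilon)\sim -\log\varepsilon$, which is $o(\varepsilon^{-\alpha})$ for every $\alpha>0$ but unbounded; (iii) $\nu>m$, which by integrality of $\nu$ means $\nu\geq m+1$, gives $I(\varepsilon)\sim \varepsilon^{-(2\nu/m-2)}$ with $2\nu/m-2\geq 2/m$. Combined with the two-sided bound on $\Phi$, assertion (1) will follow because cases (i) and (ii) both yield $o(\varepsilon^{-2/m})$ while case (iii) yields a matching lower bound $\gtrsim \varepsilon^{-2/m}$ that rules it out; assertion (2) will follow similarly, since $\nu\leq m-1$ makes the radial exponent strictly greater than $-1$ so $I(\varepsilon)=O(1)$, whereas $\nu\geq m$ produces at least logarithmic divergence.

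The only genuinely delicate point will be establishing the uniform lower bound $\Phi(r)\geq c>0$; this is what upgrades the easy necessary directions to sharp characterizations and, in particular, distinguishes $\nu=m$ from $\nu\leq m-1$ in the regime where $I(\varepsilon)$ is only logarithmically divergent. Everything else is routine real analysis on one-variable power integrals.
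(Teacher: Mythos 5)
Your argument takes exactly the paper's route: write $f = g/z_1^\nu$ with $g$ holomorphic and $g|_\Delta\not\equiv 0$, pass to polar coordinates in $z_1$, sandwich the inner $\theta$- and $\Delta$-integral $\Phi(r)$ between positive constants independent of $r$, and then read off the three regimes of the elementary power integral $\int_\varepsilon^R r^{1-2\nu/m}\,dr$. Your mechanism for the lower bound on $\Phi(r)$ (localize near a $z^0\in\Delta$ with $g(z^0)\neq 0$ and use continuity) is a more explicit rendering of the paper's remark that $\Phi$ is continuous at $r=0$ with a nonzero limit; the two are morally identical.

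There is, however, one genuine omission. You open with ``Writing $f(z)=g(z)/z_1^\nu$\ldots'', which presupposes that $f$ extends meromorphically across $\Delta$. For the ``only if'' directions this is harmless, since $\nu\le m$ (resp.\ $\nu\le m-1$) is already a statement about a finite pole order. But for the ``if'' directions --- which are precisely the directions used later in \S\ref{sec: proof 1.2}, where an integral bound is given and a pole bound must be deduced --- you must first rule out an essential singularity along $\Delta$ before the ansatz $f=g/z_1^\nu$ makes sense. The paper disposes of this case at the outset of its proof: if $f$ is not meromorphic over $X$, then $|f|^2$ exceeds $|z_1|^{-a}$ on an open subset of $\Delta$ for every $a>0$, so $\int_{U_\varepsilon}||\omega||^{2/m}$ diverges faster than $\varepsilon^{-b}$ for every $b>0$, contradicting either hypothesized integral estimate. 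Inserting this preliminary reduction makes your proof complete; everything else you wrote then goes through unchanged.
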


\begin{proof}
If $f(z)$ is not meromorphic over $X$, 
then for any $a>0$, 
$|f(z)|^{2}$ diverges faster than $|z_{1}|^{-a}$ 
along an open subset of $\Delta$. 
Then $\int_{U_{\varepsilon}}||\omega||^{2/m}$ 
diverges faster than $\varepsilon^{-b}$ for any $b>0$. 
So we may assume that $f(z)$ is meromorphic over $X$ 
and write 
$f(z)=g(z)/z_{1}^{\nu}$ where 
$g(z)$ is holomorphic over $X$ with $g|_{\Delta}\not\equiv 0$, and 
$\nu$ is the pole order of $\omega$ along $\Delta$. 

By \eqref{eqn: integral Ue}, we have 
\begin{equation*}
\int_{U_{\varepsilon}}||\omega||^{2/m} = 
\int_{\varepsilon}^{R} r^{1-2\nu/m}dr \int_{0}^{2\pi} d\theta 
\int_{\Delta} |g(z)|^{2/m} dx_{2}\wedge \cdots \wedge dy_{N}.  
\end{equation*}  
As a function of $r$, the integral 
\begin{equation*}
\int_{0}^{2\pi} d\theta \int_{\Delta} |g(z)|^{2/m} dx_{2}\wedge \cdots \wedge dy_{N} 
\end{equation*}
is continuous at $0\leq r \leq R$, 
and has a nonzero value at $r=0$ by $g|_{\Delta}\not\equiv 0$. 
Therefore 
\begin{equation*}
\int_{0}^{2\pi} d\theta \int_{\Delta} |g(z)|^{2/m} dx_{2}\wedge \cdots \wedge dy_{N} 
 = 
C+o(1)  
\qquad (r \to 0) 
\end{equation*}
for some constant $C>0$. 
It follows that 
\begin{equation*}
\int_{U_{\varepsilon}}||\omega||^{2/m} 
=  
\int_{\varepsilon}^{R}  r^{1-2\nu /m} (C+o(1)) dr. 
\end{equation*}
When $\nu <m$, this shows that 
$\int_{U_{\varepsilon}}||\omega||^{2/m} = O(1)$. 
When $\nu \geq m$, we obtain 
\begin{equation*}
C' | \log \varepsilon \, | + ({\rm const}) 
\: \leq \: 
\int_{U_{\varepsilon}}||\omega||^{2/m} 
\: \leq \: 
C'' | \log \varepsilon \, | + ({\rm const}) \qquad 
\textrm{when} \: \: \nu =m, 
\end{equation*} 
\begin{equation*}
C' \varepsilon^{2(1-\nu /m)} + ({\rm const}) 
\: \leq \:  
\int_{U_{\varepsilon}}||\omega||^{2/m} 
\: \leq \:  
C'' \varepsilon^{2(1-\nu /m)} + ({\rm const}) \qquad 
\textrm{when} \: \: \nu >m, 
\end{equation*} 
for some constants $C', C''>0$ independent of $\varepsilon \ll 1$. 
This first shows the equivalence in (2). 
The smallest $\nu$ with $\nu >m$ is $\nu =m+1$, for which 
$\varepsilon^{2(1-\nu /m)}=\varepsilon^{-2/m}$. 
This implies the equivalence in (1). 
\end{proof}

For our argument in \S \ref{sec: proof 1.2}, 
it is crucial in (1) 
to pass from the bound $O(\log \varepsilon)$ 
to the (seemingly) weaker $o(\varepsilon^{-2/m})$, 
which creates a room for the estimate. 
 
\begin{remark}\label{remark: independence on normal parameter}
This criterion 
does not depend on the choice of the normal parameter $z_{1}$ for $\Delta$. 
Indeed, if $z_{1}'$ is another normal parameter, 
there exist constants $c, c'>0$ such that 
$c|z_{1}'|\leq |z_{1}|$ and $c'|z_{1}|\leq |z_{1}'|$ 
around $\Delta$. 
If $T_{r}'=\{ |z_{1}'|\leq r \}$ is the tubular neighborhood of radius $r$ 
with respect to $z_{1}'$, then 
$T'_{c'r}\subset T_{r}$ and $T_{cr}\subset T'_{r}$. 
Writing $U_{r}'=T_{R}'-T_{r}'$, 
we have  
$U_{\varepsilon}\subset U'_{c'\varepsilon}$ and 
$U'_{\varepsilon}\subset U_{c\varepsilon}$ 
up to a region independent of $0<\varepsilon \ll 1$. 
Thus, if 
$\int_{U_{\varepsilon}}||\omega||^{2/m}=o(\varepsilon^{-\alpha})$ 
holds, then 
\begin{equation*}
\int_{U'_{\varepsilon}}||\omega||^{2/m} \leq 
\int_{U_{c\varepsilon}}||\omega||^{2/m} + (\textrm{const}) = 
o(c^{-\alpha}\varepsilon^{-\alpha}) = 
o(\varepsilon^{-\alpha}), 
\end{equation*}
and vise versa. 
\end{remark}

We also want to have a simple normal crossing version of (2). 
Let $X$ be again a polydisc in ${\C}^{N}$ 
and let $\Delta$ now be defined by $z_{1}\cdots z_{k}=0$. 
We take a smaller closed polydisc $V\subset X$. 

\begin{proposition}\label{prop: L2 snc}
An $m$-canonical form $\omega$ on $X-\Delta$ 
has at most pole of order $m-1$ along every component of $\Delta$ 
if and only if 
$\int_{V-\Delta} || \omega ||^{2/m}<\infty$. 
\end{proposition}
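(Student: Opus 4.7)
The plan is to reduce both implications to the smooth-divisor case of Proposition \ref{prop: L2 criterion} (2): the forward direction by a direct Fubini estimate, and the converse by localizing at a generic point of each component of $\Delta$ where the divisor becomes smooth.

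For the direction $(\Rightarrow)$, suppose $\omega$ has pole order $\nu_j \leq m-1$ along $\{z_j=0\}$ for each $j=1,\ldots,k$. Writing $\omega = g(z)\, z_1^{-\nu_1}\cdots z_k^{-\nu_k}(dz_1\wedge\cdots\wedge dz_N)^{\otimes m}$ with $g$ holomorphic on $X$ (hence bounded on the compact set $V$ by some constant $M$), we have
\begin{equation*}
\int_{V-\Delta}||\omega||^{2/m} \;\leq\; M^{2/m}\int_V \prod_{j=1}^{k}|z_j|^{-2\nu_j/m}\, dV.
\end{equation*}
By Fubini and polar coordinates $z_j=r_j e^{i\theta_j}$ for $j\leq k$, this splits into a product of one-variable integrals $2\pi\int_0^{R_j}r^{1-2\nu_j/m}\,dr$ times the Lebesgue volume of $V$ in the remaining variables, and each factor converges because $1-2\nu_j/m \geq 2/m-1 > -1$.

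For the direction $(\Leftarrow)$, I would fix a component, say $\Delta_1=\{z_1=0\}$, and pick a generic point $p=(0,p_2,\ldots,p_N)\in\Delta_1$ with $p_j\neq 0$ for $j=2,\ldots,k$. Choose a small open polydisc $W\subset V$ centered at $p$ and disjoint from $\{z_j=0\}$ for $j\geq 2$, so that $W\cap\Delta=W\cap\Delta_1$ is a smooth divisor in $W$. Since $\int_{W-\Delta}||\omega||^{2/m}\leq \int_{V-\Delta}||\omega||^{2/m}<\infty$, the annular integrals $\int_{U_\varepsilon\cap W}||\omega||^{2/m}$ are bounded uniformly in $\varepsilon>0$, in particular $O(1)$ as $\varepsilon\to 0$. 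Proposition \ref{prop: L2 criterion} (2) applied to $(W,W\cap\Delta_1)$ then yields that $\omega$ has pole order at most $m-1$ along $\Delta_1$ at $p$, and since $p$ is generic this controls $\omega$ along the whole component $\Delta_1$; the argument for each $\Delta_j$ is identical. The one subtlety, and the main technical point of the proof, is that the finite integral hypothesis does not directly tell us that $\omega$ is meromorphic along $\Delta$, only along its smooth locus. But writing $\omega=f(z)(dz_1\wedge\cdots\wedge dz_N)^{\otimes m}$ and setting $h=z_1^{m-1}\cdots z_k^{m-1}f$, the generic argument shows $h$ is holomorphic off the codimension-two singular locus $\Delta_{\mathrm{sing}}$ (at a smooth point of $\Delta_j$ the remaining factors $z_i^{m-1}$ for $i\neq j$ are nonvanishing units), and Hartogs' extension theorem then promotes $h$ to a holomorphic function on all of $X$, giving meromorphy of $\omega$ with pole order $\leq m-1$ along every component.
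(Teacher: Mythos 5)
Your proposal is correct and follows the same route as the paper: the forward direction (pole order $\leq m-1$ implies finite integral) is the same direct Fubini-and-polar-coordinates estimate the paper gives, writing $\omega$ with the $z_j$-powers pulled out, bounding the holomorphic numerator on the compact set $V$, and observing that each radial integral $\int_0^{R_j} r^{1-2\nu_j/m}\,dr$ converges since the exponent exceeds $-1$. For the converse, the paper simply says it ``follows from Proposition~\ref{prop: L2 criterion}~(2),'' and your proof is the natural unpacking of that remark: localize at a smooth point of each component $\Delta_j$, use finiteness of the integral to get the $O(1)$ estimate on annular integrals, and apply Proposition~\ref{prop: L2 criterion}~(2). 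The one genuine addition is your explicit Hartogs step promoting $h=(z_1\cdots z_k)^{m-1}f$ from a holomorphic function on $X\setminus\Delta_{\mathrm{sing}}$ to one on all of $X$; the paper leaves this implicit, and your treatment of it is correct and makes the logical structure of ``pole order $\leq m-1$ along each component'' (which presupposes meromorphy through the intersection locus) explicit rather than tacit.
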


\begin{proof}
The ``if'' direction follows from Proposition \ref{prop: L2 criterion} (2), 
so we only have to consider the ``only if'' direction. 
We can write 
\begin{equation*}
\omega = 
f(z) \cdot (z_{1}\cdots z_{k})^{1-m} \cdot   
(dz_{1}\wedge \cdots \wedge dz_{N})^{\otimes m} 
\end{equation*}
for some holomorphic function $f(z)$ on $X$. 
Writing 
$z_{\alpha}=r_{\alpha}e^{i\theta_{\alpha}}$, 
we have 
\begin{eqnarray*}
|| \omega ||^{2/m} 
& = & 
C \cdot |f(z)|^{2/m}(r_{1} \cdots r_{k})^{2/m-2} 
dz_{1}\wedge d\bar{z}_{1} \wedge \cdots \wedge dz_{N}\wedge d\bar{z}_{N} \\ 
&  \leq & 
C' \cdot (r_{1} \cdots r_{k})^{2/m-1} 
dr_{1}\wedge d\theta_{1} \wedge \cdots \wedge dr_{k}\wedge d\theta_{k} \wedge 
dz_{k+1}\wedge \cdots \wedge d\bar{z}_{N}. 
\end{eqnarray*}
Since 
$\int^{1}_{\varepsilon}r^{\delta}dr=O(1)$ 
if $\delta>-1$, 
this shows that 
$\int_{V-\Delta} || \omega ||^{2/m}<\infty$. 
\end{proof}


\section{Proof of Theorem \ref{main thm: interior}}\label{sec: proof 1.1} 

From now on we begin the proof of our main results stated in \S \ref{sec: intro}. 
In this section we prove Theorem \ref{main thm: interior}. 
Most part of this section can be read after \S \ref{sec: basic def}. 
We use the common notation 
$f\colon {\Xn} \to {\D}$ and 
$f\colon {\XG} \to {\AG}$ 
for both projections.

\subsection{Proof of \eqref{main eqn: interior}}\label{ssec: proof 1.1}

We first derive the isomorphism \eqref{main eqn: interior} in Theorem \ref{main thm: interior}. 
Recall from \S \ref{ssec: marked family} that we have the isomorphism 
$K_{f}\simeq f^{\ast}L^{\otimes n}$ over ${\Xn}$  and 
$K_{{\D}}\simeq L^{\otimes g+1}$ over ${\D}$. 
Combining them, we obtain the isomorphism 
\begin{equation}\label{eqn: K=fLg+n+1}
K_{{\Xn}}^{\otimes m} \simeq 
K_{f}^{\otimes m} \otimes f^{\ast}K_{{\D}}^{\otimes m} \simeq 
f^{\ast}L^{\otimes (g+n+1)m} 
\end{equation}
over ${\Xn}$. 

Let ${\G}$ be a finite-index subgroup of ${\Sp}$. 
We first consider the case ${\G}$ is torsion-free. 
The ${\G}$-linearized line bundles $K_{{\Xn}}$, $K_{f}$, $K_{{\D}}$ on ${\Xn}$ and ${\D}$ 
descend to the line bundles $K_{{\XG}}$, $K_{f}$, $K_{{\AG}}$ on ${\XG}$ and ${\AG}$. 
Also $L$ descends to a line bundle on ${\AG}$ which we again denote by $L$. 
Since the isomorphism \eqref{eqn: K=fLg+n+1} is ${\G}$-equivariant, 
it descends to the isomorphism 
\begin{equation*}
K_{{\XG}}^{\otimes m} \simeq f^{\ast}L^{\otimes (g+n+1)m} 
\end{equation*}
of line bundles over ${\XG}$. 
Taking global sections over ${\XG}$ gives 
\begin{eqnarray*}
H^0({\XG}, K_{{\XG}}^{\otimes m}) 
& \simeq & 
H^0({\XG}, f^{\ast}L^{\otimes (g+n+1)m}) \\ 
& \simeq & 
H^0({\AG}, L^{\otimes (g+n+1)m}) = 
M_{(g+n+1)m}({\G}). 
\end{eqnarray*} 
Clearly this isomorphism is compatible with multiplication, 
and we obtain the isomorphism \eqref{main eqn: interior} in this case.

We next consider the general case ${\G}$ is not necessarily torsion-free. 

\begin{lemma}\label{lem: unramify interior}
The projection ${\Xn}\to{\XG}$ is unramified in codimension $1$. 
\end{lemma}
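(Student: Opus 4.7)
The plan is to show that for every non-identity $\gamma\in\Gamma$, the fixed locus $\mathrm{Fix}(\gamma)\subset\mathcal{X}^{(n)}$ of the action on $\mathcal{X}^{(n)}$ (of which $X^{n}(\Gamma)=\mathcal{X}^{(n)}/\Gamma$ is the quotient) has codimension $\geq 2$. Since the $\underline{\Lambda}^{\oplus n}$-action on $F^{\oplus n}$ is free, every element of $\Lambda^{\oplus n}\rtimes\Gamma$ acting with a fixed divisor would have to do so via its $\Gamma$-image, so it suffices to consider the $\Gamma$-action directly. The two cases to handle are $\gamma=-1$ (when $-1\in\Gamma$) and $\gamma\neq\pm 1$.

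In the first case, $\gamma=-1$ acts trivially on $\mathcal{D}$ because $-V=V$ as subspaces, and acts as inversion on each fiber $A^{n}$ of $f_{n}\colon\mathcal{X}^{(n)}\to\mathcal{D}$. Its fixed locus in a fiber is the discrete set $A[2]^{n}$, so $\mathrm{Fix}(-1)\subset\mathcal{X}^{(n)}$ has pure codimension $gn\geq 2$ (using $g\geq 2$ and $n\geq 1$).

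In the second case $\gamma\neq\pm 1$, the fibration $\mathcal{X}^{(n)}\to\mathcal{D}$ is $\gamma$-equivariant, so $\mathrm{Fix}(\gamma)$ is a family over $\mathcal{D}^{\gamma}$ whose fiber over $[V]$ is $(A^{\gamma})^{n}$, giving
\begin{equation*}
\mathrm{codim}_{\mathcal{X}^{(n)}}\mathrm{Fix}(\gamma) \; = \; \mathrm{codim}_{\mathcal{D}}\mathcal{D}^{\gamma} \; + \; \mathrm{codim}_{A^{n}}(A^{\gamma})^{n}.
\end{equation*}
Both summands will be shown to be $\geq 1$. The base part uses the standard fact that $\pm 1$ are the only elements of $\mathrm{Sp}(\Lambda_{\mathbb{R}})$ acting trivially on $\mathcal{D}$, so $\mathcal{D}^{\gamma}$ is a proper analytic subset. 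The fiber part uses the key linear-algebra observation: if $[V]\in\mathcal{D}^{\gamma}$ and $\gamma|_{V}=\pm\mathrm{id}$, then reality of $\gamma$ (as an element of $\mathrm{Sp}(\Lambda)$) forces $\gamma|_{\bar{V}}=\pm\mathrm{id}$ with the same sign; since $\Lambda_{\mathbb{C}}=V\oplus\bar{V}$ by the positivity defining $\mathcal{D}$, this would mean $\gamma=\pm 1$ on $\Lambda$, contrary to hypothesis. Hence $\gamma$ acts nontrivially on the tangent space $V^{\vee}$ at the origin of $A=V^{\vee}/\Lambda$, so $A^{\gamma}=\ker(\gamma-1\colon A\to A)$ is proper in $A$, and $(A^{\gamma})^{n}$ is proper in $A^{n}$.

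The main obstacle is really just the second bullet of the second case: the deduction that $\gamma$ acts nontrivially on any $\gamma$-fixed period $V$, which combines the reality of $\gamma$ with the Hodge-theoretic decomposition $\Lambda_{\mathbb{C}}=V\oplus\bar{V}$. Everything else is a routine decomposition of the codimension along the fibration, together with the elementary fact that the only elements of $\mathrm{Sp}(\Lambda_{\mathbb{R}})$ stabilizing every Lagrangian in $\mathcal{D}$ are $\pm 1$.
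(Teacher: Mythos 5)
Your proof is correct and follows essentially the same strategy as the paper: split into $\gamma=-1$ (codimension $gn\geq 2$ via the $2$-torsion sections) and $\gamma\neq\pm 1$ (codimension $\geq 1$ on the base plus $\geq 1$ in the fiber). The only substantive difference is that you explicitly justify, via the reality of $\gamma$ and the Hodge splitting $\Lambda_{\mathbb C}=V\oplus\bar V$, the claim that $\gamma|_V\neq\pm\mathrm{id}$, whereas the paper states "the $\gamma$-action on the linear space $V$ is nontrivial" without proof.
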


\begin{proof}
Let $\gamma \ne {\rm id} \in{\G}$ be an element of finite order. 
It suffices to show that the fixed locus of $\gamma$ on ${\Xn}$ has codimension $\geq 2$. 
When $\gamma=-{\rm id}$, the fixed locus is 
the sections of order $\leq2$ points in the abelian fibration ${\Xn}\to{\D}$, 
which has codimension $gn\geq2$. 
When $\gamma \ne \pm{\rm id}$, $\gamma$ acts on ${\D}$ nontrivially. 
If $[V]\in{\D}$ is a fixed point of $\gamma$, 
the $\gamma$-action on the linear space $V$ is nontrivial, 
so $\gamma$ acts on the fiber $(V^{\vee}/\Lambda)^{n}$ nontrivially. 
Then the fixed locus of $\gamma$ on ${\Xn}$ has codimension $\geq 1+1=2$. 
\end{proof}

We choose a torsion-free normal subgroup $\Gamma'\lhd{\G}$ of finite index. 
The quotient group $G={\G}/\Gamma'$ acts on $X^{n}(\Gamma')\to A(\Gamma')$ with quotient ${\XG}\to{\AG}$. 
By the previous step for $\Gamma'$, 
we have an isomorphism 
\begin{equation*}
H^{0}(X^{n}(\Gamma'), K_{X^{n}(\Gamma')}^{\otimes m}) 
\simeq M_{(g+n+1)m}(\Gamma') 
\end{equation*}
which by construction is $G$-equivariant. 
We take the $G$-invariant part of this isomorphism. 
For the right side we have by definition 
\begin{equation*}
M_{(g+n+1)m}(\Gamma')^{G} = M_{(g+n+1)m}(\Gamma). 
\end{equation*}
For the left side, since $X^{n}(\Gamma')\to{\XG}$ is unramified in codimension $1$ 
by Lemma \ref{lem: unramify interior}, we have 
\begin{equation*}
H^{0}(X^{n}(\Gamma'), K_{X^{n}(\Gamma')}^{\otimes m})^{G} 
= H^{0}({\XG}, K_{{\XG}}^{\otimes m}).
\end{equation*}
We thus obtain the isomorphism \eqref{main eqn: interior}. 
\qed 

\begin{example}\label{cor: vanish gk odd}
Let $g, n, m$ be odd and assume $-1\in{\G}$. 
Then there is no nonzero $m$-canonical form on ${\XG}$. 
Indeed, $M_k({\G})=0$ for $k=(g+n+1)m$ odd,  
because $-1\in{\G}$ acts on $L^{\otimes k}$ by multiplication by $(-1)^{gk}=-1$. 
\end{example}



\subsection{Proof of \eqref{main eqn: smooth proj cusp form}}

Next we show that the $m=1$ component of \eqref{main eqn: interior} 
gives the isomorphism \eqref{main eqn: smooth proj cusp form}. 
This is a consequence of the equality of the Petersson norm of a modular form 
and the $L^{2}$ norm of the corresponding canonical form. 
For later use in \S \ref{sec: proof 1.2} and \S \ref{sec: Proof 1.3}, we formulate this relation for 
$m$-canonical forms with $m\geq 1$. 
(In this section we only need the case $m=1$.) 

\begin{proposition}\label{prop: L2 criterion modular}
Let $B$ be an open set of ${\D}$. 
Let $F$ be a section of $L^{\otimes (g+n+1)m}$ over $B$ 
and $\omega$ be the $m$-canonical form on $f^{-1}(B)\subset \mathcal{X}^{(n)}$ corresponding to $f^{\ast}F$ 
by the isomorphism $K_{{\Xn}}^{\otimes m}\simeq f^{\ast}L^{\otimes (g+n+1)m}$. 
Then we have 
\begin{equation*}
\int_{f^{-1}(B)} || \omega ||^{2/m} = 
\int_{B} (F, F)_{(g+n+1)m}^{1/m} {\volD}
\end{equation*}
up to a constant independent of $F$ and $B$. 
\end{proposition}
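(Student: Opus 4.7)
The plan is to reduce the identity on $f^{-1}(B)$ to a fiber-integrated formula on $B$ via the product decomposition $K_{\Xn}\simeq K_{f}\otimes f^{\ast}K_{\D}$. First I would fix a local holomorphic frame $s$ of $L$ on $B$, write $F=F_{0}\cdot s^{\otimes(g+n+1)m}$ for a holomorphic function $F_{0}$ on $B$, and use the isomorphisms $K_{f}\simeq f^{\ast}L^{\otimes n}$ and $K_{\D}\simeq L^{\otimes g+1}$ from \S\ref{ssec: marked family} to transport $s^{\otimes n}$ to a local relative canonical form $\alpha$ on $f^{-1}(B)$, and $s^{\otimes g+1}$ to a local canonical form $\beta$ on $B$. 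The composite isomorphism $K_{\Xn}\simeq f^{\ast}L^{\otimes g+n+1}$ then sends $s^{\otimes g+n+1}$ to $\eta_{s}:=\alpha\wedge f^{\ast}\beta$, and we get $\omega=F_{0}\cdot \eta_{s}^{\otimes m}$.

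Next, from the local description of $||\,\cdot\,||^{2/m}$ in \S \ref{sec: L^{2/m} criterion}, a direct unwinding gives
\begin{equation*}
||\omega||^{2/m}
 \; = \;
C_{1}\cdot|F_{0}|^{2/m}\,(\alpha\wedge\bar{\alpha})\wedge f^{\ast}(\beta\wedge\bar{\beta})
\end{equation*}
for a universal sign $C_{1}$, because $\eta_{s}^{\otimes m}\wedge\overline{\eta_{s}^{\otimes m}}$ is the $m$-th tensor power of $\eta_{s}\wedge\bar{\eta_{s}}$ and the $m$-th root operation removes the tensor power. Since $|F_{0}|$ is pulled back from $B$, Fubini along the abelian fibration $f\colon{\Xn}\to{\D}$ then yields
\begin{equation*}
\int_{f^{-1}(B)}||\omega||^{2/m}
 \; = \;
C_{1}\int_{B}|F_{0}|^{2/m}\Bigl(\,\int_{A^{n}}\alpha\wedge\bar{\alpha}\Bigr)\,\beta\wedge\bar{\beta}.
\end{equation*}

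The key computational step is the fiber integral: identifying $\alpha|_{A^{n}}$ with the canonical form on $A^{n}$ coming from $s^{\otimes n}\in L_{[V]}^{\otimes n}\simeq H^{0}(K_{A^{n}})$, Lemma \ref{lem: weight 1 Petersson geometric} (applied with $k=n$) gives
\begin{equation*}
i^{g^{2}n}\int_{A^{n}}\alpha\wedge\bar{\alpha}
 \; = \;
(s^{\otimes n},s^{\otimes n})_{n}
 \; = \;
(s,s)_{1}^{n}.
\end{equation*}
For the base piece, formula \eqref{lem: weight g+1 Petersson geometric} gives $\beta\wedge\bar{\beta}=i^{-N^{2}}(\beta,\beta)_{g+1}{\volD}$, and since $\beta$ corresponds to $s^{\otimes g+1}$ we have $(\beta,\beta)_{g+1}=(s,s)_{1}^{g+1}$. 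Combining the two and using the multiplicativity $(s,s)_{1}^{n}\cdot(s,s)_{1}^{g+1}=(s,s)_{1}^{g+n+1}=(s,s)_{g+n+1}$ produces
\begin{equation*}
\int_{f^{-1}(B)}||\omega||^{2/m}
 \; = \;
C\int_{B}|F_{0}|^{2/m}\,(s,s)_{g+n+1}\,{\volD}
\end{equation*}
for a constant $C$ depending only on $g,n,m$. Finally, since $(F,F)_{(g+n+1)m}=|F_{0}|^{2}(s,s)_{1}^{(g+n+1)m}$ and hence $(F,F)_{(g+n+1)m}^{1/m}=|F_{0}|^{2/m}(s,s)_{g+n+1}$, one obtains the claim.

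The main obstacle will be the careful bookkeeping of signs and powers of $i$: the definition of $||\,\cdot\,||^{2/m}$ carries a factor $i^{(N+gn)(N+gn-2)m}$, Lemma \ref{lem: weight 1 Petersson geometric} a factor $i^{g^{2}n}$, and \eqref{lem: weight g+1 Petersson geometric} a factor $i^{N^{2}}$; one must verify that these combine into a universal constant independent of $F$, $B$, and the chosen frame $s$, so that the final proportionality constant is absolute. All non-$F$-dependent data are ${\rm Sp}(\Lambda_{\R})$-invariant, so this check is essentially formal.
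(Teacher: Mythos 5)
Your proof is correct and follows essentially the same route as the paper's: decompose $\omega$ via $K_{\Xn}\simeq K_f\otimes f^{\ast}K_{\D}$, compute the $L^{2/m}$ density in local product form, fiber-integrate, and then invoke Lemma \ref{lem: weight 1 Petersson geometric} for the fiber integral and \eqref{lem: weight g+1 Petersson geometric} for the base piece. The only cosmetic difference is that you generate the nowhere-vanishing relative and base canonical forms from a chosen frame $s$ of $L$, whereas the paper works with abstract nowhere-vanishing forms $\omega_f,\omega_B$ and the corresponding sections $F_1,F_2$ of $L^{\otimes n},L^{\otimes g+1}$.
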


\begin{proof}
Since the problem is local over ${\D}$, 
we may assume that $B$ is sufficiently small. 
Since 
$K_{{\Xn}}^{\otimes m} \simeq f^{\ast}K_{{\D}}^{\otimes m}\otimes K_{f}^{\otimes m}$ 
and $f_{\ast}K_{f}$ is invertible, 
we can write $\omega$ as 
\begin{equation*}
\omega  = 
f^{\ast}\varphi \cdot f^{\ast}\omega_{B}^{\otimes m} \otimes \omega_{f}^{\otimes m}, 
\end{equation*}
where 
$\varphi$ is a holomorphic function on $B$, 
$\omega_{B}$ a nowhere vanishing canonical form on $B$, and 
$\omega_{f}$ a nowhere vanishing relative canonical form on $f^{-1}(B)$. 
Then we have 
\begin{equation*}
|| \omega ||^{2/m} = 
C \cdot f^{\ast}| \varphi |^{2/m} \cdot 
f^{\ast}(\omega_{B}\wedge\bar{\omega}_{B}) \otimes 
(\omega_{f}\wedge \bar{\omega}_{f}), 
\end{equation*}
and hence 
\begin{equation*}
\int_{f^{-1}(B)} || \omega ||^{2/m} = 
C \cdot \int_{B} \left( \int_{f^{-1}(B)/B} \omega_{f}\wedge\bar{\omega}_{f} \right) 
|\varphi|^{2/m} \omega_{B}\wedge\bar{\omega}_{B}, 
\end{equation*}
where $\int_{f^{-1}(B)/B}$ means fiber integral. 

On the other hand, 
under the isomorphism $K_{f}\simeq f^{\ast}L^{\otimes n}$ 
we have $\omega_{f}=f^{\ast}F_{1}$ for some section $F_{1}$ of $L^{\otimes n}$ over $B$,  
and under the isomorphism $K_{{\D}} \simeq L^{\otimes g+1}$  
we have $\omega_{B}=F_{2}$ for some section $F_{2}$ of $L^{\otimes g+1}$ over $B$.  
By construction we have 
$F = \varphi \cdot F_{1}^{\otimes m}\otimes F_{2}^{\otimes m}$, 
and hence 
\begin{equation*}
(F, F)_{(g+n+1)m} = 
|\varphi|^{2} \cdot (F_{1}, F_{1})_{n}^{m} \cdot (F_{2}, F_{2})_{g+1}^{m}. 
\end{equation*}
By Lemma \ref{lem: weight 1 Petersson geometric} and \eqref{lem: weight g+1 Petersson geometric}, 
we see that 
\begin{eqnarray*}
\int_{B} (F, F)_{(g+n+1)m}^{1/m} {\volD} 
& = & 
\int_{B}|\varphi|^{2/m} \cdot (F_{1}, F_{1})_{n} \cdot (F_{2}, F_{2})_{g+1} {\volD} \\ 
& = & 
\int_{B} \left( \int_{f^{-1}(B)/B} \omega_{f}\wedge \bar{\omega}_{f} \right) 
|\varphi|^{2/m} \omega_{B}\wedge \bar{\omega}_{B} \\ 
& = & 
\int_{f^{-1}(B)} ||\omega ||^{2/m} 
\end{eqnarray*}
up to a constant. 
This proves Proposition \ref{prop: L2 criterion modular}. 
\end{proof}

The isomorphism \eqref{main eqn: smooth proj cusp form} is deduced as follows. 
Let $F\in M_{g+n+1}({\G})$ and $\omega\in H^{0}(K_{{\XG}})$ be the corresponding canonical form. 
By Proposition \ref{prop: L2 criterion modular} with $m=1$, we have 
\begin{equation*}
\int_{{\XG}}\omega \wedge \bar{\omega} = 
\int_{{\AG}}(F, F)_{g+n+1}{\volD} \; \; \leq \infty. 
\end{equation*}
As is well-known, $\int_{{\XG}}\omega \wedge \bar{\omega}$ converges if and only if 
$\omega$ extends holomorphically over a smooth projective model $X$ of ${\XG}$ (cf.~Proposition \ref{prop: L2 criterion}). 
On the other hand, by Proposition \ref{prop: asymptotic Petersson norm}, 
$\int_{{\AG}}(F, F)_{g+n+1}{\volD}$ converges if and only if $F$ is a cusp form. 
Thus $\omega$ extends over $X$ if and only if $F$ is a cusp form. 
\qed

We give application of the correspondence \eqref{main eqn: smooth proj cusp form} to 
the Kodaira dimension of ${\XG}$ in a few cases. 

\begin{example}\label{ex: g=2-6}
For ${\G}=\Gamma_{g}={\rm Sp}(2g, {\Z})$, 
$A_{g,n}=X^{n}(\Gamma_{g})$ is the moduli space of 
$n$-pointed principally polarized abelian varieties of genus $g$. 
When $g\leq 5$, $A_{g,1}$ is unirational (\cite{Ve}, \cite{F-V}). 
On the other hand, when $g\leq 6$, we have the following knowledge about cusp forms. 

(1) 
When $g=2$, by Igusa \cite{Ig62}, we have a unique cusp form of weight $10$, which is the minimal weight. 
Hence $A_{2,7}$ has geometric genus $1$ and $\kappa(A_{2,n})\geq 0$ for $n\geq7$.  

(2) When $g=3$,  by Tsuyumine (\cite{Tsuyu} Corollary in \S 20), 
we have a unique cusp form of weight $12$, which is the minimal weight. 
Hence $A_{3,8}$ has geometric genus $1$ and $\kappa(A_{3,n})\geq 0$ for $n\geq8$.  

(3) When $g=4$, we have the Schottky form as a cusp form of weight $8$ (\cite{Ig82}). 
By Salvati-Manni \cite{SM}, this is the minimal weight and $\dim S_{8}(\Gamma_{4})=1$.  
Hence $A_{4,3}$ has geometric genus $1$ and $\kappa(A_{4,n})\geq 0$ for $n\geq3$. 
$A_{4,2}$ is the only missing case in $g=4$.  

(4) When $g=5$, we have $\dim S_{12}(\Gamma_{5})=2$ by Neeb-Venkov \cite{N-V},  
and this is the minimal weight by Poor-Yuen \cite{P-Y}. 
Hence $A_{5,6}$ has geometric genus $2$ and $\kappa(A_{5,n})>0$ for $n\geq6$. 

(5) When $g=6$, we have $\dim S_{12}(\Gamma_{6})\geq 3$ by Neeb-Venkov \cite{N-V}. 
Hence $A_{6,5}$ has geometric genus $\geq 3$ and $\kappa(A_{6,n})>0$ for $n\geq 5$. 
Poor-Yuen \cite{P-Y} showed that there is no cusp form of weight $\leq 8$. 
\end{example}

\subsection{Vector-valued Siegel modular forms}\label{ssec: spectral sequence}

This subsection is a sort of appendix to \S \ref{ssec: proof 1.1}. 
We combine a generalization of the argument in \S \ref{ssec: proof 1.1} 
with the Leray spectral sequence for ${\XG}\to{\AG}$. 
This gives a spectral sequence that relates the cohomology of $K_{{\XG}}^{\otimes m}$ 
to the cohomology of certain vector bundles on ${\AG}$ 
whose sections are vector-valued modular forms. 
The result of this subsection will not be used in other sections. 

We assume that ${\G}$ is torsion-free. 
The universal quotient bundle $F$ over ${\D}$ descends to a vector bundle over ${\AG}$ 
which we again denote by $F$. 

\begin{proposition}
Assume that ${\G}$ is torsion-free. 
For each $m\geq0$ there exists a spectral sequence 
\begin{equation}\label{eqn: spectral sequence}
E_{2}^{p,q} = H^{p}({\AG}, \bigwedge^{q}(F^{\oplus n})\otimes L^{\otimes(g+n+1)m}) 
\: \:  \Rightarrow \: \: 
H^{p+q}({\XG}, K_{{\XG}}^{\otimes m}). 
\end{equation}
\end{proposition}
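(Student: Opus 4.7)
The plan is to realize \eqref{eqn: spectral sequence} as the Leray spectral sequence for $f \colon X^{n}(\Gamma) \to A(\Gamma)$ applied to the sheaf $K_{X^{n}(\Gamma)}^{\otimes m}$, and then to identify the higher direct images explicitly. Since $\Gamma$ is torsion-free, the $\Gamma$-equivariant isomorphism \eqref{eqn: K=fLg+n+1} descends to give
\begin{equation*}
K_{X^{n}(\Gamma)}^{\otimes m} \; \simeq \; f^{\ast}L^{\otimes (g+n+1)m}
\end{equation*}
on $X^{n}(\Gamma)$, so the projection formula yields
\begin{equation*}
R^{q}f_{\ast}K_{X^{n}(\Gamma)}^{\otimes m}
\; \simeq \; L^{\otimes (g+n+1)m} \otimes R^{q}f_{\ast}\mathcal{O}_{X^{n}(\Gamma)}.
\end{equation*}
It remains to identify $R^{q}f_{\ast}\mathcal{O}_{X^{n}(\Gamma)}$ with $\bigwedge^{q}(F^{\oplus n})$.

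For this, I would work first upstairs on $\mathcal{D}$. The fiber of $f \colon \mathcal{X}^{(n)} \to \mathcal{D}$ over $[V]$ is $A^{n}$, where $A = V^{\vee}/\Lambda$, and for an abelian variety the K\"unneth and Hodge decompositions give $H^{q}(A^{n},\mathcal{O}) = \bigwedge^{q} H^{1}(A^{n},\mathcal{O})$ with $H^{1}(A^{n},\mathcal{O}) = H^{1}(A,\mathcal{O})^{\oplus n}$. Via the Hodge decomposition, $H^{1}(A,\mathcal{O}_{A})$ is naturally the conjugate of $H^{0}(\Omega^{1}_{A}) = V$, and under the splitting $\Lambda_{\mathbb{C}} = V \oplus \bar V$ from \S\ref{ssec: marked family} this conjugate is canonically identified with $\Lambda_{\mathbb{C}}/V = F_{[V]}$. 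Thus $R^{1}(f_{n})_{\ast}\mathcal{O}_{\mathcal{X}^{(n)}} \simeq F^{\oplus n}$ as $\mathrm{Sp}(\Lambda_{\mathbb{R}})$-equivariant bundles on $\mathcal{D}$, and by the K\"unneth identification,
\begin{equation*}
R^{q}(f_{n})_{\ast}\mathcal{O}_{\mathcal{X}^{(n)}} \; \simeq \; \bigwedge^{q}(F^{\oplus n}).
\end{equation*}
This isomorphism is $\Gamma$-equivariant, so it descends to the corresponding statement on $A(\Gamma)$.

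Putting the pieces together, the $E_{2}$ page of the Leray spectral sequence for $f$ applied to $K_{X^{n}(\Gamma)}^{\otimes m}$ becomes
\begin{equation*}
E_{2}^{p,q} \; = \; H^{p}\!\bigl(A(\Gamma),\; \bigwedge^{q}(F^{\oplus n}) \otimes L^{\otimes (g+n+1)m}\bigr)
\; \Longrightarrow \; H^{p+q}(X^{n}(\Gamma),\,K_{X^{n}(\Gamma)}^{\otimes m}),
\end{equation*}
which is \eqref{eqn: spectral sequence}. The $q=0$ row recovers the isomorphism \eqref{main eqn: interior}, since $\bigwedge^{0}(F^{\oplus n}) = \mathcal{O}_{A(\Gamma)}$ and $H^{0}(A(\Gamma), L^{\otimes (g+n+1)m}) = M_{(g+n+1)m}(\Gamma)$.

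The main technical obstacle I expect is the identification $R^{1}(f_{n})_{\ast}\mathcal{O}_{\mathcal{X}^{(n)}} \simeq F^{\oplus n}$: one must verify that the natural Hodge-theoretic identification $H^{1}(A,\mathcal{O}_{A}) \simeq \bar V$ matches, in a canonical and $\mathrm{Sp}(\Lambda_{\mathbb{R}})$-equivariant way, the quotient $F_{[V]} = \Lambda_{\mathbb{C}}/V$, and that this is compatible with the torus structure of each fiber as $[V]$ varies. Everything else (projection formula, descent by the torsion-free $\Gamma$, Leray spectral sequence) is formal once this identification is in place.
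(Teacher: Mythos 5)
Your overall strategy is the same as the paper's: run the Leray spectral sequence for $f\colon \XG\to\AG$ applied to $K_{\XG}^{\otimes m}$, use $K_{\XG}^{\otimes m}\simeq f^{\ast}L^{\otimes(g+n+1)m}$ and the projection formula to reduce to computing $R^{q}f_{\ast}\mathcal{O}$, and identify this with $\bigwedge^{q}(F^{\oplus n})$. Where you differ is the fiberwise identification of $R^{q}f_{\ast}\mathcal{O}$ with $\bigwedge^{q}(F^{\oplus n})$. You go through the Hodge filtration/conjugate space: $H^{1}(A,\mathcal{O}_{A})=H^{0,1}(A)\simeq\bar{V}$, projected onto $\Lambda_{\C}/V=F_{[V]}$ — and you correctly flag that making this $\C$-linear and holomorphic in $[V]$ is exactly the point that needs care, since $\bar{V}$ depends antiholomorphically on $[V]$; the standard resolution is to use the Hodge filtration quotient $H^{1}(A,\C)/F^{1}\simeq\Lambda_{\C}/V$ rather than the subspace $\bar V$ literally. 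The paper instead identifies $H^{q}(\mathcal{O}_{A^{n}})$ with $H^{0}(\Omega^{q}_{A^{n}})^{\vee}\simeq\bigwedge^{q}(V^{\vee})^{\oplus n}$ via the (holomorphic, $\Sp(\Lambda_{\R})$-equivariant) Hodge pairing $(\omega,\eta)\mapsto\int_{A^{n}}\omega\wedge\eta\wedge h^{gn-q}$, and then uses $E^{\vee}\simeq F$ from the symplectic form; this avoids the conjugation issue altogether because everything in sight is manifestly holomorphic. The paper also explicitly invokes Grauert's theorem to get local freeness of $R^{q}f_{\ast}\mathcal{O}$ and the fiber identification, which you should include to justify that the pointwise computation actually produces a vector bundle with the claimed transition functions. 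With those two small points filled in, your argument is sound and reaches the same conclusion.
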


\begin{proof}
We abbreviate $X={\XG}$. 
We shall rewrite the $E_{2}$ page of the Leray spectral sequence 
\begin{equation*}
E_{2}^{p,q} = H^{p}({\AG}, R^{q}f_{\ast}(K_{X}^{\otimes m})) 
\: \:  \Rightarrow \: \: 
H^{p+q}(X, K_{X}^{\otimes m}). 
\end{equation*}
Since $K_{X}\simeq f^{\ast}L^{\otimes g+n+1}$, 
we have  
\begin{equation*}
R^{q}f_{\ast}(K_{X}^{\otimes m}) 
 \simeq  
R^{q}f_{\ast}f^{\ast}L^{\otimes (g+n+1)m}
 \simeq  
R^{q}f_{\ast}\mathcal{O}_{X} \otimes L^{\otimes (g+n+1)m} 
\end{equation*}
by the projection formula. 
We shall show that 
\begin{equation*}
R^{q}f_{\ast}\mathcal{O}_{X} \simeq \bigwedge^{q}(F^{\oplus n}). 
\end{equation*}
By Grauert's theorem (\cite{Harts} III.12.9), 
$R^{q}f_{\ast}\mathcal{O}_{X}$ is locally free 
and its fiber over a point $[A]=[V^{\vee}/\Lambda]$ of ${\AG}$ is 
identified with $H^{q}(\mathcal{O}_{A^{n}})$. 
We have the canonical isomorphisms 
\begin{equation*}
H^{q}(\mathcal{O}_{A^{n}}) 
\simeq 
H^{0}(\Omega_{A^{n}}^{q})^{\vee} 
\simeq 
\bigwedge^{q}H^{0}(\Omega_{A^{n}}^{1})^{\vee} 
\simeq 
\bigwedge^{q}(H^{0}(\Omega_{A}^{1})^{\vee})^{\oplus n} 
\simeq  \bigwedge^{q}(V^{\vee})^{\oplus n}  
\end{equation*}
Here the first isomorphism is induced from the Hodge pairing 
\begin{equation*}
H^{q,0}(A^{n}) \times H^{0,q}(A^{n}) \to {\C}, \quad 
(\omega, \eta) \mapsto \int_{A^{n}}\omega\wedge\eta\wedge h^{gn-q}, 
\end{equation*}
where $h$ is the polarization on $A^{n}$ 
induced from the given symplectic form. 
The space $\bigwedge^{q}(V^{\vee})^{\oplus n}$ 
is the fiber of 
$\bigwedge^{q}(E^{\vee})^{\oplus n} \simeq \bigwedge^{q}F^{\oplus n}$ 
over $[A]$. 
\end{proof}

Sections of $\bigwedge^{q}(F^{\oplus n})\otimes L^{\otimes k}$ 
over ${\AG}$ are identified with  
modular forms of weight $k$ for ${\G}$ 
with values in $\bigwedge^{q}({\rm St}^{\oplus n})^{\vee}$ 
where ${\rm St}$ is the standard representation of ${\rm GL}_{g}({\C})$. 
Thus the edge morphism of \eqref{eqn: spectral sequence} 
at the side $p=0$ takes the form 
\begin{equation*}
H^{q}({\XG}, K_{{\XG}}^{\otimes m}) \to 
M_{(g+n+1)m}(\Gamma, \bigwedge^{q}({\rm St}^{\oplus n})^{\vee}). 
\end{equation*}
The edge morphism at the other side $q=0$ takes the form 
\begin{equation*}
H^{p}({\AG}, L^{\otimes (g+n+1)m}) \to 
H^{p}({\XG}, K_{{\XG}}^{\otimes m}). 
\end{equation*}



\section{Proof of Theorem \ref{main thm: full cpt}}\label{sec: proof 1.2} 

In this section we prove Theorem \ref{main thm: full cpt}. 
Let us begin with recalling the setting. 
Let $X$ be a complex analytic variety containing ${\XG}$ as a Zariski open set. 
We assume that 
\begin{itemize}
\item the singular locus of $X$ has codimension $\geq2$, 
\item ${\XG}\to{\AG}$ extends to a morphism 
$f\colon X\to {\AGcpt}$ to some toroidal compactification of ${\AG}$, and 
\item every irreducible component of $\Delta_{X}=X-{\XG}$ dominates 
some irreducible component of $\Delta_{A}={\AGcpt}-{\AG}$. 
\end{itemize}
By the second condition, $\Delta_{X}=f^{-1}(\Delta_{A})$ is a divisor of $X$. 
We want to show that the isomorphism 
$H^{0}(K_{{\XG}}^{\otimes m}) \simeq M_{(g+n+1)m}({\G})$ 
in \S \ref{ssec: proof 1.1} extends to an isomorphism 
\begin{equation*}
H^{0}(X, K_{X}^{\otimes m}(m\Delta_{X})) \simeq M_{(g+n+1)m}({\G}). 
\end{equation*}
Since restriction to ${\XG}\subset X$ gives an inclusion 
\begin{equation*}
H^{0}(X, K_{X}^{\otimes m}(m\Delta_{X})) 
\hookrightarrow 
H^{0}({\XG}, K_{{\XG}}^{\otimes m}), 
\end{equation*}
it is sufficient to show that this is actually equality. 
In other words, we want to show that 
every $m$-canonical form on ${\XG}$ 
has at most pole of order $m$ along every irreducible component of $\Delta_{X}$.  
We deduce this property by applying the $L^{2/m}$ criterion in \S \ref{sec: L^{2/m} criterion}. 
The estimate of $L^{2/m}$ norm required there essentially reduces to  
the asymptotic estimate of Petersson norm in \S \ref{sec: boundary estimate}.


\subsection{Pullback to relative torus embedding}

In this subsection we translate the $L^{2/m}$ criterion on $X$ 
to that on the family over the torus fibration associated to each cusp. 
Let $I$ be a primitive isotropic sublattice of $\Lambda$. 
Let 
${\XI} = {\Xn}/{\UIZ}$
be the $n$-fold Kuga family over 
${\BI}={\D}/{\UIZ}$. 
The situation around the $I$-cusp is as follows: 
\begin{equation}\label{eqn: diagram -1}
\xymatrix{
{\XI} \ar[r]^{\hat{p}_{1}} \ar[d]_{f} & {\XI}/{\GIZbar} \ar[r]^{\hat{p}_{2}} \ar[d]_{f} & 
{\XG} \ar@{^{(}-_{>}}[rd] \ar[d]_{f} &   \\ 
{\BI} \ar[r]^{p_{1}} \ar@{^{(}-_{>}}[rd] & {\BI}/{\GIZbar} \ar[r]^{p_{2}} \ar@{^{(}-_{>}}[rd] & 
{\AG} \ar@{^{(}-_{>}}[rd]   &  X \ar[d]_{f}  \\  
 & \mathcal{B}_{I}^{\Sigma} \ar[r]_{p_{1}}  & \mathcal{B}_{I}^{\Sigma}/{\GIZbar} \ar[r]_{p_{2}} & 
{\AGcpt}     
}
\end{equation}
We write 
$p=p_{2}\circ p_{1}$ and $\hat{p}=\hat{p}_{2}\circ \hat{p}_{1}$. 

Let ${\R}_{\geq0}Q\subset {\UIR}$ be a positive definite ray in $\Sigma_{I}$, 
where $Q$ is a primitive vector of ${\UIZ}$. 
Let $\Delta_{Q}=\Delta_{Q,I}$ be the boundary stratum of $\mathcal{B}_{I}^{\Sigma}$ 
of codimension $1$ 
corresponding to ${\R}_{\geq0}Q$, 
and we put $\Delta_{Q}'=p(\Delta_{Q})\subset {\AGcpt}$. 
We write $a$ for the ramification index of 
$\mathcal{B}_{I}^{\Sigma}\to {\AGcpt}$ at $\Delta_{Q}$. 
We shall localize the situation. 
We take a general point $x$ of $\Delta_{Q}$, 
its small neighborhood $V$ in $\mathcal{B}_{I}^{\Sigma}$, 
and its small neighborhood $\Delta_{x}$ in $\Delta_{Q}$ contained in $V$. 
Then $y=p(x)$ is a general point of $\Delta_{Q}'$, 
$V'=p(V)$ is a small neighborhood of $y$ in ${\AGcpt}$, 
$\Delta_{y}'=p(\Delta_{x})$ is a small neighborhood of $y$ in $\Delta_{Q}'$, 
and $p\colon \Delta_{x}\to \Delta_{y}'$ is isomorphic. 
In some local coordinates around $x$ and $y$, 
$p\colon V\to V'$ is expressed as 
\begin{equation*}
(z, z_{1}, \cdots , z_{N}) \mapsto (z'=z^{a}, z_{1}, \cdots , z_{N}),  
\end{equation*}
with $\Delta_{Q}$ defined by $z=0$ 
and $\Delta_{Q}'$ defined by $z'=0$. 

\begin{remark}
By Theorem \ref{thm: toroidal} (2), $a$ equals to the ramification index of 
$\mathcal{B}_{I}^{\Sigma}\to \mathcal{B}_{I}^{\Sigma}/{\GIZbar}$ at $\Delta_{Q}$. 
Let $U(I)_{{\Z}}^{\star}={\UIQ}\cap \langle {\G}, -1 \rangle$. 
Using Proposition \ref{prop: action of GIZ},  we can show that 
$a=1$ if $Q/2\notin U(I)_{{\Z}}^{\star}$ and 
$a=2$ if $Q/2\in U(I)_{{\Z}}^{\star}$,  
like the classification of regular/irregular cusps in the case $g=1$. 
We do not need this precise information for the proof of Theorem \ref{main thm: full cpt}. 
\end{remark}

We set 
$U'=f^{-1}(V')\subset X$, 
$V^{\circ}=V\backslash \Delta_{Q}\subset {\BI}$ and 
$U^{\circ}=f^{-1}(V^{\circ})\subset {\XI}$. 
The situation is  
\begin{equation}\label{eqn: diagram -2}
\xymatrix{
U^{\circ} \ar@{->>}[r]^{\hat{p}} \ar[d]_{f} & U'\cap {\XG} \ar@{^{(}-_{>}}[rd] \ar[d]_{f} &   \\ 
V^{\circ}  \ar@{->>}[r]^{p} \ar@{^{(}-_{>}}[rd] & 
V'\backslash \Delta_{Q}' \ar@{^{(}-_{>}}[rd]   &  U' \ar[d]_{f}  \\  
  & V \ar@{->>}[r]_{p} & V'      
}
\end{equation}
which is a localization of \eqref{eqn: diagram -1}.  
Here $U^{\circ}\to V^{\circ}$ is an abelian fibration, 
$U'\cap {\XG} \to V'\backslash \Delta_{Q}'$ is an abelian or Kummer fibration, 
$p$ has degree $a$, 
and $\hat{p}$ has degree $a$ or $2a$. 

Let $T_{r}$ be the tubular neighborhood of $\Delta_{x}$ of radius $|z|=r$.  
Then $p(T_{r})$ is the tubular neighborhood of $\Delta_{y}'$ 
of radius $|z'|=r^{a}$. 
We fix a sufficiently small $0< R \ll 1$, 
and for $0 < \varepsilon < R$ we set 
\begin{equation*}
V_{\varepsilon} = T_{R} - T_{\varepsilon} \subset V^{\circ}, 
\end{equation*}
\begin{equation*}
V_{\varepsilon}' = p(V_{\varepsilon}) \subset V'\backslash \Delta_{Q}'. 
\end{equation*}
Then $V_{\varepsilon}$ is the annulus bundle of radius 
$\varepsilon \leq |z| \leq R$ 
around $\Delta_{x}$, 
and $V_{\varepsilon}'$ is the annulus bundle of radius 
$\varepsilon^{a}\leq |z'| \leq R^{a}$ 
around $\Delta_{y}'$. 
Let 
\begin{equation*}
U_{\varepsilon} = f^{-1}(V_{\varepsilon}) \subset U^{\circ}, 
\end{equation*}
\begin{equation*}
U_{\varepsilon}' = f^{-1}(V_{\varepsilon}') \subset U' \cap {\XG}, 
\end{equation*}
be the families over these bases. 
We have as restriction of \eqref{eqn: diagram -2}
\begin{equation*}\label{eqn: diagram -3}
\xymatrix{
U_{\varepsilon} \ar@{->>}[r]^{\hat{p}} \ar[d]_{f} & U_{\varepsilon}' \ar[d]^{f}   \\ 
V_{\varepsilon} \ar@{->>}[r]^{p} & V_{\varepsilon}'  
}
\end{equation*}
Then $U_{\varepsilon}'$ gives an annulus bundle 
around general point of each component of 
$\Delta_{X}$ lying over $\Delta_{Q}'$. 
To be more precise, 
let $f^{-1}(\Delta_{Q}')=\sum_{i}\Delta_{i}$ be the irreducible decomposition of 
the reduced divisor $f^{-1}(\Delta_{Q}') \subset X$. 
We can write 
$f^{\ast}\Delta_{Q}'=\sum_{i}d_{i}\Delta_{i}$ 
for some natural numbers $d_{i}$. 
By our third assumption of Theorem \ref{main thm: full cpt}, 
each $\Delta_{i}$ dominates $\Delta_{Q}'$, 
so we can choose a general point $y_{i}$ of $\Delta_{i}$ such that 
$f(y_{i})=y$ and  
the tangent map $T_{y_{i}}\Delta_{i}\to T_{y}\Delta_{Q}'$ is surjective. 
In some local coordinate around $y_{i}$, 
the projection $f\colon X \to {\AGcpt}$ is expressed as 
\begin{equation*}
(z'', z_{1}, \cdots , z_{N}, w_{1}, \cdots , w_{gn}) \mapsto 
(z'=(z'')^{d_{i}}, z_{1}, \cdots , z_{N}), 
\end{equation*}
with $\Delta_{i}$ defined by $z''=0$. 
We take a small neighborhood $\Delta_{y_{i}}$ of $y_{i}$ in $\Delta_{i}$ 
and let $U'_{\varepsilon,i}$ be 
the restriction of $U'_{\varepsilon}$ around $\Delta_{y_{i}}$.   
Then $U'_{\varepsilon,i}$ is the annulus bundle 
of radius $\varepsilon^{a/d_{i}} \leq |z''| \leq R^{a/d_{i}}$ around $\Delta_{y_{i}}$. 

We can now state the version of the $L^{2/m}$ criterion we will apply. 

\begin{lemma}\label{prop: L2 criterion at I}
Let $\omega_{X}$ be an $m$-canonical form on $U'\cap {\XG}$ 
and $\omega_{I}$ be the pullback of $\omega_{X}$ to $U^{\circ}$. 

(1) Assume that for every positive number $\alpha>0$ the asymptotic estimate 
\begin{equation}\label{eqn: L2 criterion at I}
\int_{U_{\varepsilon}} ||\omega_{I} ||^{2/m} = o(\varepsilon^{-\alpha}) 
\qquad (\varepsilon \to 0)  
\end{equation}
holds. 
Then $\omega_{X}$ has at most pole of order $m$ along $U'\cap \Delta_{i}$ 
for every irreducible component $\Delta_{i}$ of $f^{-1}(\Delta_{Q}')$. 

(2) If 
$\int_{U_{\varepsilon}} ||\omega_{I} ||^{2/m} = O(1)$, 
then $\omega_{X}$ has at most pole of order $m-1$ 
along $U'\cap \Delta_{i}$ for every $\Delta_{i}$. 
\end{lemma}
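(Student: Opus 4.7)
The plan is to use the finite cover $\hat{p}$ to convert the $L^{2/m}$ estimate on $U_\varepsilon$ into an estimate on $U'_\varepsilon\subset U'\cap\XG$ by change of variables, then localize around a general smooth point $y_i$ of each boundary component $\Delta_i$ of $\Delta_X$ over $\Delta'_Q$, and apply the general $L^{2/m}$ pole-order criterion of Proposition \ref{prop: L2 criterion}.

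First I would note that $\hat{p}\colon U^\circ\to U'\cap\XG$ is a finite surjective map of degree $d\in\{a,2a\}$ and $\omega_I=\hat{p}^{\ast}\omega_X$. A local computation on the smooth locus (whose complement has codimension $\ge 2$ and thus does not contribute to integrals of continuous forms) shows $\|\omega_I\|^{2/m}=\hat{p}^{\ast}(\|\omega_X\|^{2/m})$: writing $\omega_X=f(w)(dw_1\wedge\cdots\wedge dw_N)^{\otimes m}$ locally, the pullback acquires a Jacobian factor $J^m$, whose $2/m$-power $|J|^2$ is exactly the Jacobian of $\hat{p}^{\ast}dV_w=|J|^2\,dV_z$. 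The change-of-variables formula for finite surjective maps then gives
\[
\int_{U_\varepsilon}\|\omega_I\|^{2/m}=d\int_{U'_\varepsilon}\|\omega_X\|^{2/m},
\]
so both hypotheses of (1) and (2) transfer, up to the constant $d$, into the corresponding asymptotic bounds on $\int_{U'_\varepsilon}\|\omega_X\|^{2/m}$.

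Next I would localize. By the codimension-$\ge 2$ assumption on $\mathrm{Sing}(X)$, a general point $y_i$ of each $\Delta_i$ is smooth on $X$, and only $\Delta_i$ meets a small neighborhood $U'_i\ni y_i$ among the components of $\Delta_X$. In the local coordinates recalled in the excerpt, $\Delta_i=\{z''=0\}$ and $U'_{\varepsilon,i}$ is the annulus $\{\varepsilon^{a/d_i}\le|z''|\le R^{a/d_i}\}$ around a neighborhood $\Delta_{y_i}$ of $y_i$ in $\Delta_i$. Since $U'_{\varepsilon,i}\subset U'_\varepsilon$, the bounds from Step 1 restrict to the same bounds on $\int_{U'_{\varepsilon,i}}\|\omega_X\|^{2/m}$. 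Setting $\delta=\varepsilon^{a/d_i}$, this is the standard annulus bundle of inner radius $\delta$ to which Proposition \ref{prop: L2 criterion} applies (with normal parameter $z''$ and the fixed small outer radius $R^{a/d_i}$; cf.\ Remark \ref{remark: independence on normal parameter}). In case (1), $o(\varepsilon^{-\alpha})$ for every $\alpha>0$ becomes $o(\delta^{-\alpha d_i/a})$, i.e.\ $o(\delta^{-\alpha'})$ for every $\alpha'>0$ (in particular for $\alpha'=2/m$), giving pole order $\le m$ along $\Delta_i$ by Proposition \ref{prop: L2 criterion}(1). In case (2), the $O(1)$ bound is preserved under the rescaling and yields pole order $\le m-1$ by Proposition \ref{prop: L2 criterion}(2). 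Since this holds for every $i$, the lemma follows.

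The main care needed is the bookkeeping of the three distinct radii $\varepsilon$, $\varepsilon^a$, $\varepsilon^{a/d_i}$ coming respectively from $\hat{p}$, from the ramification index $a$ of $\mathcal{B}_I^\Sigma\to\AGcpt$ on the base, and from the ramification $z'=(z'')^{d_i}$ of $f$ along $\Delta_i$. It is precisely the ``for every $\alpha>0$'' form of the hypothesis in (1) that allows the estimate to survive the exponent rescaling $\alpha\mapsto \alpha d_i/a$; a single one-shot bound $o(\varepsilon^{-2/m})$ would be insufficient whenever $d_i/a>1$.
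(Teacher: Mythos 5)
Your proof is correct and follows essentially the same route as the paper: pull the $L^{2/m}$ estimate from $U_\varepsilon$ down to $U'_\varepsilon$, restrict to the annulus bundle $U'_{\varepsilon,i}$ around a general smooth point $y_i$ of $\Delta_i$, account for the radius rescaling $\varepsilon \mapsto \varepsilon^{a/d_i}$, and invoke Proposition~\ref{prop: L2 criterion}. The only cosmetic difference is that you state the exact change-of-variables equality $\int_{U_\varepsilon}\|\omega_I\|^{2/m} = d\int_{U'_\varepsilon}\|\omega_X\|^{2/m}$, whereas the paper only uses the one-sided inequality $\int_{U'_\varepsilon}\|\omega_X\|^{2/m} \leq \int_{U_\varepsilon}\|\omega_I\|^{2/m}$ (which is all that is needed and is slightly more robust, since it does not require $\hat{p}$ to have constant fiber cardinality over $U'_\varepsilon$); you also correctly identify the point that the ``for every $\alpha>0$'' form of the hypothesis is what makes the exponent rescaling $\alpha\mapsto\alpha d_i/a$ harmless.
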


\begin{proof}
(1) 
It is sufficient to look at the pole order of $\omega_{X}$ around $\Delta_{y_{i}}$. 
What has to be shown is that 
the asymptotic estimate \eqref{eqn: L2 criterion at I} for $\omega_{I}$ 
implies an asymptotic estimate for $\omega_{X}$ around $\Delta_{y_{i}}$ 
as in Proposition \ref{prop: L2 criterion} (1). 
Since $U_{\varepsilon,i}'$ is the annulus bundle 
of radius $[\varepsilon^{a/d_{i}}, R^{a/d_{i}}]$ around $\Delta_{y_{i}}$, 
the estimate in Proposition \ref{prop: L2 criterion} (1) for $\omega_{X}$ 
is equivalent to the estimate 
\begin{equation}\label{eqn: L2 criterion at I 2}
\int_{U_{\varepsilon,i}'} || \omega_{X} ||^{2/m} 
= o( (\varepsilon^{a/d_{i}})^{-2/m} ) 
= o( \varepsilon^{-2a/md_{i}} ) 
\qquad (\varepsilon \to 0).  
\end{equation}
Thus it suffices to show that 
\eqref{eqn: L2 criterion at I} implies \eqref{eqn: L2 criterion at I 2}. 

Since $U_{\varepsilon,i}' \subset U_{\varepsilon}'$ and 
$||\omega_{X}||^{2/m}$ is a nonnegative multiple of a volume form, 
we have 
\begin{equation*}
\int_{U_{\varepsilon,i}'} || \omega_{X} ||^{2/m} \leq 
\int_{U_{\varepsilon}'} || \omega_{X} ||^{2/m}. 
\end{equation*}
Pulling back $\omega_{X}$ to $U_{\varepsilon}$ 
by $U_{\varepsilon}\twoheadrightarrow U'_{\varepsilon}$, 
we have 
\begin{equation*}
\int_{U_{\varepsilon}'} || \omega_{X} ||^{2/m}  
\leq \int_{U_{\varepsilon}} || \omega_{I} ||^{2/m}. 
\end{equation*}
Then, if we substitute $\alpha=2a/md_{i}$ into \eqref{eqn: L2 criterion at I}, 
we obtain 
\begin{equation*}
\int_{U_{\varepsilon}} || \omega_{I} ||^{2/m} = o( \varepsilon^{-2a/md_{i}} ). 
\end{equation*}
This gives \eqref{eqn: L2 criterion at I 2}. 

The proof of (2) is similar, using 
Proposition \ref{prop: L2 criterion} (2) 
in place of Proposition \ref{prop: L2 criterion} (1). 
\end{proof}


\subsection{Completion of the proof}

We translate the criterion in Lemma \ref{prop: L2 criterion at I} 
to that for the corresponding local modular forms. 
Combining the results so far, we obtain the following.  

\begin{proposition}\label{prop: F=O(1) L2}
Let $\omega_{X}$ and $\omega_{I}$ be as in Lemma \ref{prop: L2 criterion at I} 
and $k=(g+n+1)m$. 
Let $F$ be the section of $L^{\otimes k}$ over $V^{\circ}$ such that 
$\omega_{I}=f^{\ast}F$ under the isomorphism 
$K_{\mathcal{X}_{I}^{(n)}}^{\otimes m}\simeq f^{\ast}L^{\otimes k}$ 
of line bundles over $U^{\circ}$. 

(1) If $F$ extends to a holomorphic section of 
the extended line bundle $L^{\otimes k}$ over 
$V\subset \mathcal{B}_{I}^{\Sigma}$ (cf.~\S \ref{ssec: extend L}), 
then $\omega_{X}$ has at most pole of order $m$ along $U'\cap \Delta_{i}$ 
for every irreducible component $\Delta_{i}$ of $f^{-1}(\Delta_{Q}')$. 

(2) If furthermore $F$ vanishes at $\Delta_{Q}$, 
then $\omega_{X}$ has at most pole of order $m-1$ 
along $U'\cap \Delta_{i}$ for every $\Delta_{i}$. 
\end{proposition}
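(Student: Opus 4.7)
The plan is to combine three ingredients already established: the identification of the $L^{2/m}$ norm with a Petersson integral (Proposition \ref{prop: L2 criterion modular}), the asymptotic estimate of the Petersson norm near a toroidal boundary divisor (Proposition \ref{prop: asymptotic Petersson norm}), and the $L^{2/m}$-criterion pulled back to the relative torus embedding (Lemma \ref{prop: L2 criterion at I}). First I would use Proposition \ref{prop: L2 criterion modular} to rewrite
\[
\int_{U_\varepsilon} \| \omega_I \|^{2/m} \; = \; C \int_{V_\varepsilon} (F,F)_k^{1/m} \, \volD
\]
up to a positive constant. Although Proposition \ref{prop: L2 criterion modular} is literally stated over an open set of $\mathcal{D}$, both sides of the identity are invariant under $\UIZ$, so it descends to $V_\varepsilon \subset \BI$ and its preimage $U_\varepsilon \subset \XI$.

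Next I would apply Proposition \ref{prop: asymptotic Petersson norm} with $\beta = 1/m$, for which $k\beta = g+n+1 \geq g+1$. That proposition is stated in terms of a maximal isotropic sublattice $J$, so if $I$ itself is not maximal I would choose some maximal $J \supset I$: by admissibility the ray $\R_{\geq 0}Q \in \Sigma_I$ lies in $\Sigma_J$ as well, and the \'etale map $\mathcal{B}_I^\Sigma \to \mathcal{B}_J^\Sigma$ from Lemma \ref{lem: glue} identifies a local model of $x \in \Delta_{Q,I}$ with one of its image $x' \in \Delta_{Q,J}$. Since the extension of $L$ is independent of the choice of $J$ (Proposition \ref{lem: well-defined extension of L}), the section $F$ transports to a local section of $L^{\otimes k}$ on $\mathcal{B}_J^\Sigma$ around $x'$ which, under hypothesis (1), extends holomorphically across $\Delta_{Q,J}$, and under hypothesis (2) additionally vanishes on $\Delta_{Q,J}$. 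Proposition \ref{prop: asymptotic Petersson norm} then yields $\int_{V_\varepsilon}(F,F)_k^{1/m}\, \volD = o(\varepsilon^{-\alpha})$ for every $\alpha>0$ in case (1), and upgrades this to $O(1)$ in case (2).

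Finally I would feed these bounds into Lemma \ref{prop: L2 criterion at I}: its part (1) converts the $o(\varepsilon^{-\alpha})$ estimate for $\int_{U_\varepsilon}\| \omega_I \|^{2/m}$ into the pole-order bound $\nu \leq m$ for $\omega_X$ along every irreducible component of $f^{-1}(\Delta_Q')$, which is assertion (1) of the proposition; its part (2) converts the $O(1)$ bound into $\nu \leq m-1$, which is assertion (2).

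The main obstacle I anticipate is the bookkeeping in passing from the non-maximal cusp $I$ to a chart around $J \supset I$ where Proposition \ref{prop: asymptotic Petersson norm} literally applies. One must check that the Petersson integral, the invariant volume $\volD$, and the qualifiers $o(\varepsilon^{-\alpha})$ and $O(1)$ are all preserved under the \'etale local isomorphism $\mathcal{B}_I^\Sigma \to \mathcal{B}_J^\Sigma$; the first two follow from $\mathrm{Sp}(\Lambda_{\R})$-invariance of the metric and volume, and the last from Remark \ref{remark: independence on normal parameter}. Once this compatibility is in place, the argument is a direct concatenation of the three results above, with no further delicate analysis.
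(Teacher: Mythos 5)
Your proposal is correct and follows essentially the same route as the paper: translate the $L^{2/m}$ integral of $\omega_I$ into the Petersson integral via Proposition \ref{prop: L2 criterion modular}, pass to a maximal $J\supset I$ using the \'etale map of Lemma \ref{lem: glue} and Remark \ref{remark: independence on normal parameter}, invoke the two parts of Proposition \ref{prop: asymptotic Petersson norm} with $\beta=1/m$, and then feed the resulting $o(\varepsilon^{-\alpha})$ and $O(1)$ bounds into the two parts of Lemma \ref{prop: L2 criterion at I}. You correctly identified the only real bookkeeping (the reduction from a general cusp $I$ to a maximal $J$) and resolved it the same way the paper does.
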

 
\begin{proof}
(1) 
We shall show that  the estimate \eqref{eqn: L2 criterion at I} holds, 
which by Proposition \ref{prop: L2 criterion modular} 
is equivalent to the estimate 
\begin{equation}\label{eqn: L2 modular to be shown}
\int_{V_{\varepsilon}} (F, F)_{k}^{1/m} {\volD} 
= o(\varepsilon^{-\alpha}). 
\end{equation}
We reduce this to Proposition \ref{prop: asymptotic Petersson norm} 
by passing from the $I$-cusp to an adjacent $0$-dimensional cusp. 
Choose a maximal isotropic sublattice $J$ of $\Lambda$ containing $I$. 
By Lemma \ref{lem: glue}, the projection ${\BI}\to \mathcal{B}_{J}$ 
extends to an etale map 
$\pi\colon \mathcal{B}_{I}^{\Sigma} \to \mathcal{B}_{J}^{\Sigma}$. 
We choose $\chi\in U(J)_{{\Z}}^{\vee}$ such that $\chi(Q)=1$ 
and put $q={\exp}(2\pi i \chi(\cdot ))$ 
on $\mathcal{B}_{J}\subset T_{J}$. 
Let $W_{\varepsilon}\subset \mathcal{B}_{J}$ be 
the annulus bundle around $\pi(\Delta_{x})$ of radius 
$\varepsilon \leq |q| \leq R$. 
As explained in Remark \ref{remark: independence on normal parameter}, 
the asymptotic behavior of the integral over $V_{\varepsilon}\simeq \pi(V_{\varepsilon})$ 
is equivalent to that over $W_{\varepsilon}$. 
Now we have 
\begin{equation*}
\int_{W_{\varepsilon}} (F, F)_{k}^{1/m} {\volD} 
= o(\varepsilon^{-\alpha}) 
\end{equation*}
by the first part of Proposition \ref{prop: asymptotic Petersson norm} with $\beta = 1/m$, 
which implies \eqref{eqn: L2 modular to be shown}. 

(2) 
Similarly, we are reduced to showing that 
\begin{equation*}
\int_{V_{\varepsilon}}(F, F)_{k}^{1/m} {\volD} 
= O(1),  
\end{equation*}
which follows from the second part of Proposition \ref{prop: asymptotic Petersson norm}. 
\end{proof}

We can now prove Theorem \ref{main thm: full cpt}. 
We first derive the isomorphism \eqref{main eqn: cpt}. 
Let $\omega_{X}$ be an $m$-canonical form on ${\XG}$ and $F$ be the corresponding (global) modular form. 
Then $\omega_{I}$ corresponds to the restriction of $F$ to $V^{\circ}$. 
By Lemma \ref{lemma:Koecher toroidal}, 
$F$ extends holomorphically over $V$, 
so we can apply Proposition \ref{prop: F=O(1) L2} (1). 
We thus obtain the isomorphism \eqref{main eqn: cpt}. 
The assertion \eqref{main eqn: cpt cusp form} for cusp forms 
follows from the second part of Lemma \ref{lemma:Koecher toroidal}  
and Proposition \ref{prop: F=O(1) L2} (2). 

It remains to prove the last assertion of Theorem \ref{main thm: full cpt}. 
Let us recall the definition of klt singularities (\cite{K-M} \S 2.3). 

\begin{definition}
Let $X$ be a normal complex analytic variety 
and $\Delta$ be an effective ${\Q}$-Weil divisor such that $\lfloor \Delta \rfloor = 0$. 
The pair $(X, \Delta)$ is called \textit{Kawamata log terminal} 
if $K_{X}+\Delta$ is ${\Q}$-Cartier and 
for some (hence all) log resolution $\pi\colon Y\to X$ of $(X, \Delta)$, 
we have 
$K_{Y}=\pi^{\ast}(K_{X}+\Delta)+\sum_{i}a(E_i)E_i$ 
with $a(E_i)>-1$. 
If $\omega$ is a meromorphic $m$-canonical form on the regular locus of $X$ 
whose pole divisor satisfies $\leq m\Delta$, 
its pullback to $Y$ has at most pole of order $m-1$ along every component of the exceptional divisor. 
(This is the only property where we need the klt condition.) 
\end{definition}

Now let $X\supset {\XG}$ be as before and assume that $X$ is normal and compact.  
We write    
$\Delta_{U'}= f^{-1}(\Delta_{Q}')\cap U'$ 
for a general point $y$ of an irreducible component $\Delta_{Q}'$ of $\Delta_{A}$.  
What has to be shown is that if $(U', (1-m^{-1})\Delta_{U'})$ is klt 
for every component of $\Delta_{A}$, the map 
\begin{equation*}
\textit{S}_{(g+n+1)m}(\Gamma) \hookrightarrow H^0(K_{X}^{\otimes m}((m-1)\Delta_{X})) 
\end{equation*}
is surjective. 
Let $\omega_{X}$ be an element of $H^0(K_{X}^{\otimes m}((m-1)\Delta_{X}))$ 
and $F$ be the corresponding modular form of weight $k=(g+n+1)m$. 
In view of Lemma \ref{lemma:Koecher toroidal}, 
we want to show that $F$ vanishes at $\Delta_{Q,I}$ 
for every $I$ and positive definite ${\R}_{\geq0}Q\in \Sigma_I$. 
If we choose a maximal $J\supset I$, 
this is equivalent to the vanishing of $F$ at $\Delta_{Q,J}$. 
By the second part of Proposition \ref{prop: asymptotic Petersson norm} with $\beta=1/m$ 
(note that $km^{-1} \geq g+1$), 
it suffices to show that 
$\int_{W_{\varepsilon}}(F, F)_{k}^{1/m}{\volD}=O(1)$. 
Going back by the etale gluing 
$\mathcal{B}_{I}^{\Sigma_{I}}\to \mathcal{B}_{J}^{\Sigma_{J}}$, 
we are reduced to showing that 
$\int_{V^{\circ}}(F, F)_{k}^{1/m}{\volD}<\infty$. 
By Proposition \ref{prop: L2 criterion modular} 
this is translated to 
$\int_{U^{\circ}}||\omega_{I}||^{2/m}<\infty$, 
which in turn is equivalent to 
$\int_{U'-\Delta_{U'}}||\omega_{X}||^{2/m}<\infty$. 

We take a log resolution $(U'', \Delta_{U''})$ of $(U', \Delta_{U'})$ 
and let $E$ be its exceptional divisor. 
Then the above condition is rewritten as   
\begin{equation}\label{eqn: integral converge}
\int_{U''-E-\Delta_{U''}}||\omega_{X}||^{2/m}<\infty. 
\end{equation}
The divisor $E+\Delta_{U''}$ is simple normal crossing, 
and can be covered by finitely many local charts of $U''$ 
by the properness of $X\to{\AGcpt}$. 
Since the pole divisor of $\omega_{X}$ satisfies 
$\leq m(1-m^{-1})\Delta_{U'}$ by our assumption on $\omega_{X}$, 
the klt condition for $(U', (1-m^{-1})\Delta_{U'})$ implies that  
$\omega_{X}$ has at most pole of order $m-1$ along 
every component of $E+\Delta_{U''}$. 
Then the assertion \eqref{eqn: integral converge} 
follows from Proposition \ref{prop: L2 snc}. 
This completes the proof Theorem \ref{main thm: full cpt}. 
\qed

\begin{remark}
When $U'$ is smooth and $\Delta_{U'}$ is simple normal crossing, 
the pair $(U', (1-m^{-1})\Delta_{U'})$ is always klt. 
Hence  $\textit{S}_{(g+n+1)m}({\G})$ is isomorphic to $H^{0}(K_{X}^{\otimes m}((m-1)\Delta_{X}))$ 
for every $m$ when $X$ is smooth and $\Delta_{X}$ is simple normal crossing 
over general points of $\Delta_{A}$.  
\end{remark}


\section{Proof of Theorem \ref{main thm: Kodaira dim}}\label{sec: Proof 1.3}

In this section we prove Theorem \ref{main thm: Kodaira dim}. 
In \S \ref{ssec: cano map} we prove the assertion (1). 
In \S \ref{ssec: Proof 1.3 (2)} we prove the assertion (2). 
For a ${\Q}$-Weil divisor $D$ on a normal compact complex analytic variety $X$, 
its Iitaka dimension $\kappa(D)=\kappa(X, D)$ is defined as 
the maximum of the dimension of the image of the rational map 
$\phi_{|mD|}\colon X \dashrightarrow {\proj}^{N}$ 
as $m$ runs (\cite{Ii}, \cite{Nakayama}). 
We write $K_{X}$ for the canonical divisor of $X$ as a Weil divisor 
or the corresponding rank $1$ reflexive sheaf on $X$.

\subsection{Canonical map}\label{ssec: cano map}

The canonical map of a smooth projective model of ${\XG}$ is described as follows. 

\begin{proposition}\label{prop: canonical map}
Let $X$ be a smooth projective model of ${\XG}$. 
The canonical map of $X$ factors through 
\begin{equation*}
X \sim {\XG} \twoheadrightarrow {\AG} \dashrightarrow {\proj}^{N} 
\end{equation*}
where ${\AG} \dashrightarrow {\proj}^{N}$ 
is the rational map defined by $\textit{S}_{g+n+1}({\G})$. 
\end{proposition}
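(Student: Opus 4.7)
The plan is to combine the isomorphism \eqref{main eqn: smooth proj cusp form} with the observation that the correspondence constructed in Theorem \ref{main thm: interior} is literally implemented by pullback from ${\AG}$, not merely as an abstract isomorphism.

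First, I would unwind the construction in \S \ref{ssec: proof 1.1}. The starting point is the geometric isomorphism $K_{\Xn}^{\otimes m} \simeq f^{\ast}L^{\otimes(g+n+1)m}$ of ${\G}$-linearized line bundles on ${\Xn}$, arising from $K_{f} \simeq f^{\ast}L^{\otimes n}$ and $K_{{\D}} \simeq L^{\otimes g+1}$. After passing to the quotient by ${\G}$ (treating the torsion case by descent from a neat normal subgroup, using Lemma \ref{lem: unramify interior}), this descends to an isomorphism of reflexive sheaves on ${\XG}$, and taking global sections gives precisely $H^{0}({\XG}, K_{{\XG}}^{\otimes m}) \simeq H^{0}({\AG}, L^{\otimes(g+n+1)m})$ via the map $F \mapsto f^{\ast}F$.

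Second, since $X$ is a smooth projective model of ${\XG}$, the two spaces agree on a common Zariski open set, so restriction yields an inclusion $H^{0}(X, K_{X}) \hookrightarrow H^{0}({\XG}, K_{{\XG}})$, and by \eqref{main eqn: smooth proj cusp form} its image corresponds exactly to $\textit{S}_{g+n+1}({\G}) \subset M_{g+n+1}({\G})$. Consequently, every $\omega \in H^{0}(X, K_{X})$ has the form $\omega = f^{\ast}F$ on ${\XG}$ for a unique cusp form $F \in \textit{S}_{g+n+1}({\G})$, where the pullback is along the composition $X \dashrightarrow {\XG} \to {\AG}$.

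Third, pick a basis $F_{0}, \dots, F_{N}$ of $\textit{S}_{g+n+1}({\G})$ and let $\omega_{i} = f^{\ast}F_{i}$ be the corresponding basis of $H^{0}(X, K_{X})$. On the locus where the canonical map of $X$ is defined, it is given by $x \mapsto [\omega_{0}(x) : \cdots : \omega_{N}(x)]$, and since each $\omega_{i}(x)$ depends only on $f(x) \in {\AG}$, this map factors as $X \sim {\XG} \twoheadrightarrow {\AG} \dashrightarrow {\proj}^{N}$, with the last arrow being the rational map defined by $\textit{S}_{g+n+1}({\G})$. The only step requiring genuine care is verifying that the isomorphism in Theorem \ref{main thm: interior} is implemented by $f^{\ast}$ rather than being abstract, but this is manifest from its construction in \S \ref{ssec: proof 1.1}, so the proof reduces to tracing definitions.
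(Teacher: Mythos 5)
Your proof is correct and follows essentially the same route as the paper: both unwind the isomorphism of Theorem~\ref{main thm: interior} to see that canonical forms on $X$ are implemented by pullbacks $f^{\ast}F$ of cusp forms (the paper makes this explicit by fixing the trivialization $\omega_{0}=\omega_{f}\wedge f^{\ast}\omega_{{\D}}$ coming from $s_{J}$), and then observe that the canonical map $[\omega_{0}:\cdots:\omega_{N}]=[f^{\ast}F_{0}:\cdots:f^{\ast}F_{N}]$ factors through $f$.
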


When ${\AG} \dashrightarrow {\proj}^{N}$ is generically finite, 
this shows that the image of the canonical map of $X$ has dimension $g(g+1)/2$, 
which proves the assertion (1) of Theorem \ref{main thm: Kodaira dim}.  

\begin{proof}[(Proof of Proposition \ref{prop: canonical map})]
Recall 
that the isomorphism 
$\textit{S}_{g+n+1}({\G})\simeq H^{0}(K_{X})$ 
is explicitly given as follows. 
We choose a maximal isotropic sublattice $J\subset \Lambda$ 
and let $s_{J}$ be the associated frame of $L$. 
Let $\omega_{{\D}}$ 
be the canonical form on ${\D}$ corresponding to $s_{J}^{\otimes g+1}$ 
by $K_{{\D}}\simeq L^{\otimes g+1}$, 
and $\omega_{f}$ be the relative canonical form on $f\colon {\Xn}\to {\D}$ 
corresponding to $f^{\ast}s_{J}^{\otimes n}$ 
by $K_{f}\simeq f^{\ast}L^{\otimes n}$. 
We put $\omega_{0}=\omega_{f}\wedge f^{\ast}\omega_{{\D}}$. 
If $Fs_{J}^{\otimes g+n+1}$ is a cusp form of weight $g+n+1$, 
the pullback of the corresponding canonical form on $X$ to ${\Xn}$ is given by $(f^{\ast}F)\omega_{0}$. 
Thus, if $F_{i}s_{J}^{\otimes g+n+1}$, $0\leq i \leq N$, 
are basis of $\textit{S}_{g+n+1}({\G})$, 
then $(f^{\ast}F_{i})\omega_{0}$ are pullback of the corresponding basis of $H^{0}(K_{X})$ to ${\Xn}$. 
This shows that the canonical map of $X$ is given by 
$[ f^{\ast}F_{0} \colon \cdots \colon f^{\ast}F_{N}] : X \dashrightarrow {\proj}^{N}$. 
This factors through 
$X \stackrel{f}{\dashrightarrow} {\AG} \dashrightarrow {\proj}^{N}$,  
where ${\AG} \dashrightarrow {\proj}^{N}$ is the rational map defined by $[F_{0}: \cdots : F_{N}]$. 
\end{proof}

\subsection{Proof of \eqref{main eqn: Kodaira dim}}\label{ssec: Proof 1.3 (2)}

Next we prove the assertion (2) of Theorem \ref{main thm: Kodaira dim}. 
Let $X\supset {\XG}$ be a normal complex analytic variety 
satisfying the conditions of Theorem \ref{main thm: full cpt}, 
and $f\colon X\to {\AGcpt}$ the extended morphism. 
We choose and fix a natural number $m$ such that 
$L^{\otimes m}$ can be defined as a line bundle over ${\AGcpt}$ 
in the sense of the last paragraph of \S \ref{ssec: extend L}. 
Since $K_{{\Xn}}^{\otimes m}\simeq f^{\ast}L^{\otimes k}$ over ${\Xn}$ 
where $k=(g+n+1)m$, 
then $K_{{\Xn}}^{\otimes m}$ also descends to a line bundle over ${\XG}$ 
which we denote by $K_{{\XG}}^{\otimes m}$ (by abuse of notation). 
Since ${\Xn}\to{\XG}$ is unramified in codimension $1$ 
by Lemma \ref{lem: unramify interior}, 
restriction of $K_{{\XG}}^{\otimes m}$ to the regular locus of ${\XG}$ 
is indeed isomorphic to the $m$-power of its canonical bundle. 
The isomorphism 
$K_{{\Xn}}^{\otimes m}\simeq f^{\ast}L^{\otimes k}$ 
over ${\Xn}$ descends to an isomorphism 
$K_{{\XG}}^{\otimes m}\simeq f^{\ast}L^{\otimes k}|_{{\XG}}$ 
of line bundles over ${\XG}$.

\begin{proposition}\label{prop: sheaf version}
Let $X, m$ and $k=(g+n+1)m$ be as above. 
Then the isomorphism 
$f^{\ast}L^{\otimes k}|_{{\XG}} \simeq K_{{\XG}}^{\otimes m}$
of line bundles over ${\XG}$ 
extends to an injective homomorphism 
$f^{\ast}L^{\otimes k} \hookrightarrow K_{X}^{\otimes m}(m\Delta_{X})$ 
of sheaves on $X$. 
In particular, we have 
$mK_{X} \geq f^{\ast}(kL - m\Delta_{A})$. 
\end{proposition}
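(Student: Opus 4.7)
The plan is to localize the proof of Theorem \ref{main thm: full cpt}, exploiting the fact that Proposition \ref{prop: F=O(1) L2}(1) is inherently a local statement: it applies to any local holomorphic section of $L^{\otimes k}$ on a neighborhood in $\mathcal{B}_{I}^{\Sigma}$, not merely to a global modular form.

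First I reduce to a local question on the smooth locus via reflexivity. Both $f^{\ast}L^{\otimes k}$ and $K_{X}^{\otimes m}(m\Delta_{X})$ are rank-one reflexive sheaves on the normal variety $X$: the former is locally free, while the latter is the reflexive hull of $\omega_{X}^{\otimes m}\otimes \mathcal{O}_{X}(m\Delta_{X})$. Since the singular locus of $X$ has codimension $\geq 2$ by hypothesis, it suffices to define the homomorphism over the complement of the singular loci of $X$ and of $\Delta_{X}$; reflexivity then forces unique extension to all of $X$. Fix an irreducible component $\Delta_{i}$ of $\Delta_{X}$ and a general point on it; let $\Delta_{Q}'$ be the component of $\Delta_{A}$ dominated by $\Delta_{i}$ (this exists by the third hypothesis of Theorem \ref{main thm: full cpt}), and choose a cusp datum $I$ together with a positive definite ray ${\R}_{\geq 0}Q\in \Sigma_{I}^{\circ}$ corresponding to $\Delta_{Q}'$.

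Now let $s$ be a local section of $f^{\ast}L^{\otimes k}$ on a small neighborhood $U$ of a general point of $\Delta_{i}$. Choose $V\subset \mathcal{B}_{I}^{\Sigma}$ a neighborhood of a general point of $\Delta_{Q,I}$ on which $L^{\otimes k}$ is trivial (using the extension from \S \ref{ssec: extend L}), with image in ${\AGcpt}$ containing $f(U)$; after shrinking, $s=g\cdot f^{\ast}F$ for a nowhere-vanishing local frame $F$ of $L^{\otimes k}$ on $V$ and some $g\in \mathcal{O}_{X}(U)$. Under the isomorphism $f^{\ast}L^{\otimes k}|_{{\XG}}\simeq K_{{\XG}}^{\otimes m}$, $s$ corresponds to the meromorphic $m$-canonical form $\omega_{X}=g\cdot \omega_{X,F}$ on $U\cap {\XG}$, where $\omega_{X,F}$ is the form associated to $F$ as in Proposition \ref{prop: F=O(1) L2}. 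Since $g$ is holomorphic on $U$, the pole order of $\omega_{X}$ along $\Delta_{i}$ is bounded by that of $\omega_{X,F}$, which by Proposition \ref{prop: F=O(1) L2}(1) is $\leq m$. Thus $\omega_{X}$ defines a section of $K_{X}^{\otimes m}(m\Delta_{X})$ over $U$, and $s\mapsto \omega_{X}$ gives the required sheaf homomorphism. Its injectivity is automatic from the fact that it restricts to an isomorphism on the dense open ${\XG}$.

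For the divisor inequality, write $f^{\ast}\Delta_{A}=\sum_{i}d_{i}\Delta_{i}$ with $d_{i}\geq 1$ (every $\Delta_{i}$ dominates a component of $\Delta_{A}$); then $m\Delta_{X}\leq mf^{\ast}\Delta_{A}$ as effective divisors, and the sheaf injection gives $f^{\ast}(kL)\leq mK_{X}+m\Delta_{X}\leq mK_{X}+mf^{\ast}\Delta_{A}$, i.e.\ $mK_{X}\geq f^{\ast}(kL-m\Delta_{A})$. The main point requiring care is the application of Proposition \ref{prop: F=O(1) L2}(1) to a local frame $F$ on $V\subset \mathcal{B}_{I}^{\Sigma}$ rather than to a global modular form; but inspection of that proposition's proof shows it uses only the holomorphicity of $F$ on $V$ (via the first part of Proposition \ref{prop: asymptotic Petersson norm}), so no further input is needed.
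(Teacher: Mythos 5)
Your proof is correct and follows essentially the same approach as the paper: localize near a general point of a boundary component, trivialize $L^{\otimes k}$ by a local frame $F$, apply Proposition \ref{prop: F=O(1) L2}(1) to bound the pole order by $m$, and extend across codimension $\geq 2$ via normality/reflexivity, with the divisor inequality following from $f^{\ast}\Delta_{A}\geq\Delta_{X}$. The explicit remarks about reflexive hulls and about the inherently local formulation of Proposition \ref{prop: F=O(1) L2}(1) are accurate unpackings of what the paper compresses into ``by the normality of $X$, this extends over the whole $X$.''
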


\begin{proof}
We keep the notation of \S \ref{sec: proof 1.2}. 
Let $y$ be a general point of an irreducible component $\Delta_{Q}'$ of $\Delta_{A}$. 
Let $F$ be a local frame of $L^{\otimes k}$ 
on a small neighborhood $V'$ of $y$ in ${\AGcpt}$. 
Then $f^{\ast}F$ is a local frame of $f^{\ast}L^{\otimes k}$ 
on the neighborhood $U'=f^{-1}(V')$ of $f^{-1}(y)$ in $X$. 
Let $\omega$ be the $m$-canonical form on $U' \cap {\XG}$ 
corresponding to the restriction of $f^{\ast}F$ to $U' \cap {\XG}$ 
via $K_{{\XG}}^{\otimes m}\simeq f^{\ast}L^{\otimes k}|_{{\XG}}$. 
By Proposition \ref{prop: F=O(1) L2} (1), 
$\omega$ has at most pole of order $m$ along $U' \cap \Delta_{i}$ 
for every irreducible component $\Delta_{i}$ of $f^{-1}(\Delta_{Q}')$. 
This shows that the isomorphism 
$f^{\ast}L^{\otimes k}|_{{\XG}} \simeq K_{{\XG}}^{\otimes m}$ 
at $U'\cap {\XG}$ 
extends to a sheaf homomorphism 
$f^{\ast}L^{\otimes k}|_{U'} \hookrightarrow K_{U'}^{\otimes m}(m\Delta_{U'})$ 
over $U'$. 
Since we obtain this for general points of every component of $\Delta_{A}$, 
our condition on $X\to {\AGcpt}$ ensures that 
we obtain a homomorphism  
$f^{\ast}L^{\otimes k} \hookrightarrow K_{X}^{\otimes m}(m\Delta_{X})$
outside a codimension $\geq2$ locus in $X$. 
By the normality of $X$, this extends over the whole $X$. 

As for the second assertion, 
we have 
$mK_{X} + m\Delta_{X} \geq f^{\ast}(kL)$ 
by the first assertion. 
By 
our condition on $X\to{\AGcpt}$, 
we can pullback $\Delta_{A}$ to a Weil divisor of $X$ 
whose support is $\Delta_{X}$. 
(Pullback $\Delta_{A}\cap {\AG}^{\Sigma}_{reg}$ as a Cartier divisor 
and take closure in $X$.) 
Then $f^{\ast}\Delta_{A}\geq \Delta_{X}$ and so  
$mK_{X} + m f^{\ast}\Delta_{A} \geq f^{\ast}(kL)$.  
\end{proof}


\begin{proof}[(Proof of (2) of Theorem \ref{main thm: Kodaira dim})]
By Proposition \ref{prop: sheaf version}, we have 
\begin{equation*}
\kappa( X, K_{X}) 
\: \geq \:  
\kappa( X, \, f^{\ast}((g+n+1)L-\Delta_{A}))  
\: \geq \: 
\kappa({\AGcpt}, \, (g+n+1)L-\Delta_{A}). 
\end{equation*}
On the other hand, since 
\begin{equation*}
H^{0}(X, K_{X}^{\otimes m'}) \subset 
H^{0}(X, K_{X}^{\otimes m'}(m'\Delta_{X})) \simeq  
M_{m'(g+n+1)}({\G}) 
\end{equation*} 
for every $m'$, 
we have 
$\kappa(X, K_{X})  \leq  
\kappa({\AGcpt}, (g+n+1)L)  =  g(g+1)/2$. 
\end{proof}


\section{Singularities}\label{sec: singularities} 

In this section, which is largely independent of the previous sections, 
we prove that ${\XG}$ has canonical singularities in most cases 
(Proposition \ref{prop: cano sing interior}).  
%
%
Below, by a representation of a finite group $G$ over a field $K$, 
we mean a finite dimensional $K$-linear space 
equipped with a linear action of $G$.  
($K$ will be either ${\Q}$ or ${\C}$.) 
We write 
$e(\alpha)={\exp}(2\pi i \alpha)$ 
for $\alpha\in{\Q}/{\Z}$. 

Let $W$ be a representation of a finite group $G$ over ${\C}$. 
The Reid--Shepherd-Barron--Tai criterion \cite{Re}, \cite{Ta} tells 
whether $W/G$ has canonical singularities 
in terms of the eigenvalues of elements of $G$. 
Let $\gamma\in G$ and $e(\alpha_{1}), \cdots, e(\alpha_{d})$ be 
the eigenvalues of $\gamma$ on $W$ where $d=\dim W$. 
We choose $\alpha_{i}\in{\Q}$ from $0\leq \alpha_{i} <1$. 
The \textit{Reid-Tai sum} 
of $\gamma$ is defined by 
\begin{equation*}
RT_{\gamma}(W) = \sum_{i=1}^{d} \alpha_{i}. 
\end{equation*}
The action of $\gamma$ on $W$ is called
\textit{quasi-reflection} (or \textit{pseudo-reflection}) 
if its eigenvalues are $1, \cdots, 1, \lambda$ with $\lambda\ne1$. 
  
\begin{theorem}[\cite{Re}, \cite{Ta}]
Assume that $G$ contains no quasi-reflection on $W$. 
Then $W/G$ has canonical singularities if and only if 
$RT_{\gamma}(W)\geq1$ for every element $\gamma\ne {\rm id}$ of $G$. 
\end{theorem}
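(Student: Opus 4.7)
The plan is to reduce the criterion to an explicit toric calculation in the cyclic subgroup case. First, since canonicity is a local analytic condition, I would pass to an analytic neighborhood of a point of $W/G$, replace $G$ by the stabilizer of a chosen preimage in $W$, and linearize so that $G$ acts linearly on $W$ fixing the origin. The no-quasi-reflection hypothesis together with Chevalley--Shephard--Todd implies that $\pi\colon W\to W/G$ is étale in codimension one over $W/G$, so $K_{W/G}$ is the descent of the $G$-invariant canonical bundle $K_{W}$ and is ${\Q}$-Cartier. Canonicity of $W/G$ then amounts to the claim that $G$-invariant holomorphic $m$-canonical forms on $W$ pull back to holomorphic forms on a resolution $Y\to W/G$, equivalently, that all discrepancies are nonnegative.

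Next I would reduce to the cyclic case $G=\langle \gamma \rangle$. Observe that the hypothesis is inherited: no $\gamma^{k}\ne \mathrm{id}$ is a quasi-reflection on $W$, since $\gamma^{k}\in G$ and $G$ contains none. The claim is that $W/G$ is canonical if and only if $W/\langle \gamma \rangle$ is canonical for every $\gamma\in G$. Sufficiency can be obtained by combining $G$-equivariant toric resolutions along the cyclic strata and descending to $W/G$; necessity proceeds contrapositively, by pushing a bad divisorial valuation on $W/\langle \gamma \rangle$ forward along the étale-in-codimension-one finite map $W/\langle \gamma \rangle \to W/G$, which preserves discrepancies of divisors not contained in the ramification locus.

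For the cyclic case, diagonalize $\gamma=\mathrm{diag}(e(a_{1}/r),\ldots,e(a_{d}/r))$ of order $r$ with $0\le a_{i}<r$. Then $W/\langle \gamma \rangle$ is the affine toric variety with cone $\sigma=({\R}_{\ge 0})^{d}$ and lattice $N={\Z}^{d}+{\Z}\cdot r^{-1}(a_{1},\ldots,a_{d})$. For a regular subdivision of $\sigma$ giving a toric log resolution, each primitive ray $v=(v_{1},\ldots,v_{d})\in N\cap \mathrm{int}(\sigma)$ contributes an exceptional divisor of discrepancy $v_{1}+\cdots+v_{d}-1$ in standard coordinates. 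Thus $W/\langle \gamma \rangle$ is canonical if and only if $\sum_{i}v_{i}\ge 1$ for every nonzero $v\in N\cap \sigma$. The nonzero lattice elements of $N\cap [0,1)^{d}$ are precisely the points $r^{-1}(ka_{1}\bmod r,\ldots,ka_{d}\bmod r)$ for $k=1,\ldots,r-1$ with $\gamma^{k}\ne \mathrm{id}$, and their coordinate sum equals $\sum_{i}\{ka_{i}/r\}=RT_{\gamma^{k}}(W)$. This yields the cyclic criterion, and assembling over all cyclic subgroups proves the theorem.

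The hardest part will be the descent of discrepancies in the cyclic reduction, namely verifying that the discrepancy of a divisorial valuation is unchanged under the unramified-in-codimension-one finite map $W/\langle \gamma \rangle\to W/G$. This depends crucially on the no-quasi-reflection hypothesis, which prevents ramification divisors from appearing on the target and thereby eliminates any correction term that would otherwise arise from a Riemann--Hurwitz-type contribution.
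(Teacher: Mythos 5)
The paper cites this criterion from Reid and Tai without giving a proof, so there is no in-paper argument to compare against; I assess your proposal on its own terms. Your overall blueprint — linearize, pass to the \'etale-in-codimension-one quotient via Chevalley--Shephard--Todd, reduce to cyclic subgroups, and do the toric discrepancy computation — is the standard one, and your toric calculation in the cyclic case is correct, including the reduction from arbitrary nonzero lattice points of $\sigma$ to points of the unit box and the identification of their coordinate sums with $RT_{\gamma^{k}}(W)$.

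The genuine gap is in the hard direction of the cyclic reduction, namely that $W/\langle\gamma\rangle$ canonical for all $\gamma$ implies $W/G$ canonical. Your justification, ``combining $G$-equivariant toric resolutions along the cyclic strata and descending,'' is not a valid argument: for non-abelian $G$ the quotient $W/G$ is not toric, general point stabilizers need not be cyclic, and in any case the elementary discrepancy relation $a(E', W/\langle\gamma\rangle)+1 = e'\bigl(a(E, W/G)+1\bigr)$ for the \'etale-in-codimension-one map $W/\langle\gamma\rangle\to W/G$ (with $e'$ the relative ramification index at the valuation) does \emph{not} let you deduce $a(E, W/G)\geq 0$ from $a(E', W/\langle\gamma\rangle)\geq 0$ once $e'>1$; you only obtain $a(E, W/G)>-1$, i.e.\ klt. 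For the same reason, ``preserves discrepancies'' in your necessity direction is not literally correct (the finite map being \'etale in codimension one does not force $e'=1$ at exceptional valuations), although there the inequality runs in the right direction anyway. The idea you are missing is to choose $\gamma$ to be a generator of the inertia group $I$ of an extension $\tilde\nu$ of the given divisorial valuation $\nu$ to $\C(W)$; $I$ is cyclic because we are in characteristic zero (tame ramification). The inertia of $\tilde\nu$ over the intermediate field $\C(W)^{\langle\gamma\rangle}$ is then the same group $I=\langle\gamma\rangle$, so the ramification indices over $W/G$ and over $W/\langle\gamma\rangle$ coincide, and since $K_{W}$ pulls back from both $K_{W/G}$ and $K_{W/\langle\gamma\rangle}$, the two discrepancy relations $a(\tilde\nu,W)+1=|I|\bigl(a(\nu,W/G)+1\bigr)=|I|\bigl(a(\nu',W/\langle\gamma\rangle)+1\bigr)$ give $a(\nu,W/G)=a(\nu',W/\langle\gamma\rangle)\geq 0$. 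It is precisely this choice of $\gamma$ as an inertia generator, not an arbitrary element of $G$, that makes the discrepancies come out equal rather than merely comparable; without isolating it, the reduction is not established.
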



We will apply this RST criterion for 
$W$ the tangent space $T_{p}{\Xn}$ of ${\Xn}$ at a point $p\in{\Xn}$ 
and  $G$ the stabilizer of $p$ in ${\G}$.

\subsection{Distribution of eigenvalues}\label{ssec: distribution}

We first prepare a lemma on the distribution of eigenvalues. 
Let $G={\Z}/N$ be the standard cyclic group of order $N$. 
For $k\in{\Z}/N$ we write $\chi_{k/N}$ for the $1$-dimensional ${\C}$-representation of $G$ 
on which the standard generator $\bar{1}\in G$ acts by $e(k/N)$. 
Recall (\cite{Se} \S 13.1) that 
there is a unique faithful ${\Q}$-representation $\mathbb{V}_{N}$ of $G$ 
that is irreducible over ${\Q}$. 
The complexification of $\mathbb{V}_{n}$ decomposes as 
\begin{equation*}
V_{N} := \mathbb{V}_{N} \otimes_{{\Q}} {\C} \simeq 
\bigoplus_{k\in({\Z}/N)^{\times}} \chi_{k/N}. 
\end{equation*}
For $d|N$, 
$\mathbb{V}_{d}$ is a representation of $G$ via the reduction ${\Z}/N\to{\Z}/d$. 
It is classical (\cite{Se} \S 13.1) that 
every ${\Q}$-representation of $G$ decomposes over ${\Q}$ into 
a direct sum of $\mathbb{V}_{d_{1}}, \cdots, \mathbb{V}_{d_{a}}$ 
for some $d_{1}, \cdots, d_{a}|N$. 
(We may have $d_{i}=d_{j}$ for $i\ne j$.) 

\begin{lemma}\label{lem: distribution}
Let $\Lambda_{{\Q}}$ be a representation of $G$ over ${\Q}$ and 
\begin{equation}\label{eqn: irr decomp /Q}
\Lambda_{{\Q}} = \bigoplus_{i=1}^{a} \mathbb{V}_{d_{i}}
\end{equation}
be an irreducible decomposition of $\Lambda_{{\Q}}$ over ${\Q}$. 
Assume that $G$ preserves a weight $1$ Hodge decomposition 
$\Lambda_{{\C}} = V \oplus \bar{V}$ of $\Lambda_{{\C}}$. 

(1) Let $d>2$. 
If $\mathbb{V}_{d}^{\oplus k}$ appears in \eqref{eqn: irr decomp /Q}, 
there is a sub $G$-representation $W_{d}$ of $V$ such that 
$W_{d}\oplus \bar{W_{d}}\simeq V_{d}^{\oplus k}$ 
as representations of $G$ over ${\C}$. 

(2) For $d=1, 2$ the multiplicity of $\mathbb{V}_{d}$ in \eqref{eqn: irr decomp /Q} is even, 
say $2k$, and $V$ contains a sub $G$-representation $V'$ isomorphic to $V_{d}^{\oplus k}$. 
\end{lemma}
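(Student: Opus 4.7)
The plan is to decouple the problem by passing to the $\mathbb{Q}$-isotypic components of $\Lambda_{\mathbb{Q}}$. Grouping terms in (\ref{eqn: irr decomp /Q}), I would write $\Lambda_{\mathbb{Q}} = \bigoplus_{d\mid N} \Lambda_{d,\mathbb{Q}}$ where $\Lambda_{d,\mathbb{Q}} \simeq \mathbb{V}_{d}^{\oplus k_d}$ is the $\mathbb{V}_d$-isotypic component, and set $\Lambda_{d,\mathbb{C}} = \Lambda_{d,\mathbb{Q}}\otimes_{\mathbb{Q}}\mathbb{C}$. Each $\Lambda_{d,\mathbb{C}}$ is $G$-stable and, being defined over $\mathbb{Q}$, preserved by complex conjugation on $\Lambda_{\mathbb{C}}$. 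Since the isotypic decomposition is the unique $G$-invariant decomposition with these characters, any $G$-subrepresentation of $\Lambda_{\mathbb{C}}$ splits according to it; in particular $V = \bigoplus_{d}(V\cap \Lambda_{d,\mathbb{C}})$ and $\bar{V} = \bigoplus_{d}(\bar{V}\cap \Lambda_{d,\mathbb{C}})$.

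I would then set $W_d := V\cap \Lambda_{d,\mathbb{C}}$, a $G$-subrepresentation of $V$. Complex conjugation is antilinear, commutes with the $G$-action (which is defined over $\mathbb{Q}$) and preserves $\Lambda_{d,\mathbb{C}}$, so $\overline{W_d} = \bar{V}\cap \Lambda_{d,\mathbb{C}}$ and consequently
\begin{equation*}
W_d \oplus \overline{W_d} \;=\; \Lambda_{d,\mathbb{C}} \;\simeq\; V_d^{\oplus k_d}.
\end{equation*}
For $d>2$ this is already the conclusion of (1): the involution $j \leftrightarrow -j$ on $(\mathbb{Z}/d)^{\times}$ is fixed-point free, so the $\mathbb{C}$-irreducible summands $\chi_{j/d}$ of $V_d^{\oplus k_d}$ come in distinct conjugate pairs and distribute consistently between $W_d$ and $\overline{W_d}$ with no parity constraint on $k_d$.

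For $d=1,2$, which is part (2), the situation is rigid because the single character $\chi_{\ast}$ (namely $\chi_0$ if $d=1$, $\chi_{1/2}$ if $d=2$) appearing in $V_d$ is self-conjugate. Any $G$-subrepresentation $W_d \subset \Lambda_{d,\mathbb{C}} \simeq \chi_{\ast}^{\oplus k_d}$ is isomorphic to $\chi_{\ast}^{\oplus a}$ for some $a$, and complex conjugation gives an $\mathbb{R}$-linear bijection $W_d \to \overline{W_d}$, so $\dim_{\mathbb{C}} \overline{W_d} = a$ as well. Comparing with $\dim_{\mathbb{C}}(W_d \oplus \overline{W_d}) = k_d$ forces $k_d = 2a$, and then $W_d \simeq \chi_{\ast}^{\oplus k_d/2} \simeq V_d^{\oplus k_d/2}$ is the sub-representation $V'$ required in (2).

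The argument is essentially formal linear algebra once the isotypic decomposition is in place; the only point meriting care is the automatic compatibility of the Hodge decomposition $\Lambda_{\mathbb{C}} = V \oplus \bar{V}$ with the $G$-isotypic filtration, which follows from the $G$-equivariance (and rationality) of the isotypic projectors. I do not anticipate any genuine obstacle.
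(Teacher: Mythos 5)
Your proof is correct, and for part (1) it takes a genuinely different and cleaner route than the paper. The paper's proof of (1) is a hands-on construction: it fixes eigenbases of $V$ and $\bar{V}$ with respect to a generator, chooses an explicit injection $\theta$ of the eigenvalue multiset $\lambda(V_d^{\oplus k})$ into $\lambda(V)\sqcup\lambda(\bar{V})$, splits the target eigenvalues into those with argument in $(0,\pi)$ versus $(\pi,2\pi)$, and assembles $W_d = W_d^{+}\oplus W_d^{-}$ from selected eigenvectors (taking conjugates of vectors in $\bar{V}$). This requires some bookkeeping to verify $W_d^{+}\cap W_d^{-}=0$ and to match $\lambda(W_d\oplus\bar{W_d})$. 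Your version avoids all of that by observing that the $\Q$-isotypic pieces $\Lambda_{d,\C}$ are unions of $\C$-isotypic components, so that any $G$-stable subspace such as $V$ decomposes along them, and then taking the canonical candidate $W_d := V\cap\Lambda_{d,\C}$; the identity $W_d\oplus\overline{W_d}=\Lambda_{d,\C}$ drops out from $V\cap\bar{V}=0$ and a dimension count. This is tidier and even makes $W_d$ canonical (no choices). One small point you leave implicit: the lemma is phrased for $\mathbb{V}_d^{\oplus k}$ where $k$ need not be the full multiplicity $k_d$; your argument gives the case $k=k_d$ directly, and for $k<k_d$ one must pass to a subrepresentation of $W_d$ (which is easy, since for $d>2$ each conjugate pair of eigenvalues has total multiplicity $k_d$ between $W_d$ and $\overline{W_d}$, so one can peel off a balanced subspace), but you should say a word about it. For part (2) your argument is essentially the same as the paper's: both amount to a reality/dimension-parity observation. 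The paper phrases it over $\R$, using the fact that the Hodge complex structure $J$ restricts to the $\pm1$-eigenspace $\mathbb{W}\otimes\R$, whereas you phrase it over $\C$ via the antilinear bijection $W_d\to\overline{W_d}$; these are two presentations of the same idea.
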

 
\begin{proof}
(1) 
Let $d>2$. 
For a ${\C}$-representation $W$ of $G$ 
we write $\lambda(W)$ for the set of eigenvalues of $\bar{1}\in G$ 
\textit{counted with multiplicity}. 
We choose eigendecompositions of $V$ and $\bar{V}$ 
with respect to $\bar{1}\in G$: 
\begin{equation*}
V         = \bigoplus_{\lambda_{\alpha}\in \lambda(V)} {\C}v(\lambda_{\alpha}),  \qquad 
\bar{V} = \bigoplus_{\lambda_{\beta}'\in \lambda(\bar{V})} {\C}w(\lambda_{\beta}'), 
\end{equation*}
where 
$v(\lambda_{\alpha})\in V$ is a $\lambda_{\alpha}$-eigenvector 
and $w(\lambda_{\beta}')\in \bar{V}$ a $\lambda_{\beta}'$-eigenvector. 
We also fix a decomposition 
$\lambda(\Lambda_{{\C}}) = \lambda(V) \sqcup \lambda(\bar{V})$. 
Now, since $\Lambda_{{\Q}}$ contains $\mathbb{V}_{d}^{\oplus k}$, 
there exists \textit{some} embedding 
\begin{equation*}
\theta : \lambda(V_{d}^{\oplus k}) \hookrightarrow 
\lambda(\Lambda_{{\C}}) = \lambda(V) \sqcup \lambda(\bar{V}). 
\end{equation*}
($\mu\in \lambda(V_{d}^{\oplus k})$ and $\theta(\mu)\in \lambda(\Lambda_{{\C}})$ are the same number.) 
We put the elements of $\lambda(V_{d}^{\oplus k})$ 
by the order of their angle in $(0, 2\pi)$, say  
$\lambda(V_{d}^{\oplus k}) = \{ \mu_{1}, \cdots, \mu_{l} \}$. 
Then $\mu_{l+1-i}=\bar{\mu_{i}}$, 
and $\mu_{i}$ has angle in $(0, \pi)$ if $i\leq l/2$. 
We put 
\begin{equation*}
W_{d}^{+} := 
\bigoplus_{\begin{subarray}{c} i\leq l/2 \\ \theta(\mu_{i})\in\lambda(V) \end{subarray}} 
{\C}v(\theta(\mu_{i})), \qquad 
W_{d}^{-} := 
\bigoplus_{\begin{subarray}{c} j\leq l/2 \\ \theta(\mu_{j})\in\lambda(\bar{V}) \end{subarray}} 
{\C}\overline{w(\theta(\mu_{j}))}. 
\end{equation*}
Since $v(\lambda_{\alpha})\in V$ and 
$w(\lambda_{\beta}')\in \bar{V}$, 
we have $W_{d}^{+}, W_{d}^{-}\subset V$. 
We also have $W_{d}^{+}\cap W_{d}^{-}=\{ 0 \}$ because 
elements of $\lambda(W_{d}^{+})$ have angle in $(0, \pi)$ 
while those of $\lambda(W_{d}^{-})$ in $(\pi, 2\pi)$. 
Then we put 
$W_{d} = W_{d}^{+} \oplus W_{d}^{-}$. 
Since
$\lambda(W_{d}^{+}) \sqcup \lambda(\overline{W_{d}^{-}}) = \{ \mu_{1}, \cdots , \mu_{l/2} \}$, 
by construction, 
we have 
\begin{equation*}
\lambda(W_{d}\oplus \bar{W_{d}}) = 
\lambda(W_{d}^{+})\sqcup \lambda(W_{d}^{-}) \sqcup 
\lambda(\overline{W_{d}^{+}}) \sqcup \lambda(\overline{W_{d}^{-}}) 
= \{ \mu_{1}, \cdots , \mu_{l} \}. 
\end{equation*}
Therefore $W_{d}\oplus \bar{W_{d}} \simeq V_{d}^{\oplus k}$ 
as abstract $G$-representations. 

(2) 
Let $d=1$ or $2$. 
Let $\mathbb{W}\subset \Lambda_{{\Q}}$ be the direct sum of 
all components $\mathbb{V}_{d_{i}}$ in \eqref{eqn: irr decomp /Q} such that $d_{i}=d$. 
Then $W=\mathbb{W}\otimes_{{\Q}} {\R}$ is the $(\pm1)$-eigenspace of $\bar{1}\in G$ on 
$\Lambda_{{\R}}$. 
Let $J:\Lambda_{{\R}}\to\Lambda_{{\R}}$ be the complex structure given by  
the Hodge decomposition $\Lambda_{{\C}}=V\oplus \bar{V}$. 
Since the $G$-action commutes with $J$, 
$J$ preserves $W$ and gives a complex structure on $W$. 
In particular, $\mathbb{W}$ has even dimension. 
If $W_{{\C}}=V'\oplus \bar{V}'$ is the Hodge decomposition given by $J|_{W}$, 
then $V'=V\cap W_{{\C}}$ is the $(\pm1)$-eigenspace of $\bar{1}\in G$ on $V$. 
\end{proof}

\subsection{Singularities of ${\XG}$}\label{ssec: singularity interior}

As before, let $\Lambda$ be a symplectic lattice of rank $2g>2$ and   
${\G}$ a finite-index subgroup of ${\rm Sp}(\Lambda)$. 
Our main result of \S \ref{sec: singularities} is the following.  

\begin{proposition}\label{prop: cano sing interior}
The Kuga variety ${\XG}$ has canonical singularities unless when 
$(g, n)=(2, 1), (3, 1), (2, 2)$ 
and ${\G}$ contains an element of order $6$ whose eigenvalues on $\Lambda_{{\C}}$ are 
$e(1/6), e(-1/6), 1, \cdots, 1$. 
\end{proposition}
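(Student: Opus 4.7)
The strategy is to apply the Reid--Shepherd-Barron--Tai (RST) criterion to the action of each stabilizer $G = \mathrm{Stab}_{\Gamma}(p)$ on the tangent space $T_{p}\Xn$. The proof of Lemma \ref{lem: unramify interior} actually shows that every nontrivial finite-order $\gamma \in \Gamma$ has fixed locus of codimension $\geq 2$ on $\Xn$, hence no element of $G$ acts as a quasi-reflection on $T_{p}\Xn$. It therefore suffices to verify the Reid--Tai inequality
\begin{equation*}
RT_{\gamma}(T_{p}\Xn) \geq 1
\end{equation*}
for every $\gamma \neq 1$ in $G$, and the claim is that this fails precisely in the three listed configurations.

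Fix such $\gamma$ with $p$ lying over $[V] \in \D$. Since $\gamma$ fixes $p$, its tangent action is purely linear, induced from its action on $V^{\vee} \simeq F|_{[V]} \simeq \bar{V}$. Combining $T\D \simeq \mathrm{Sym}^{2}F$ with the tangent sequence of $\Xn \to \D$ (equivariantly split over $\C$ by Maschke) gives
\begin{equation*}
T_{p}\Xn \simeq \mathrm{Sym}^{2}(V^{\vee}) \oplus (V^{\vee})^{\oplus n}
\end{equation*}
as $\langle\gamma\rangle$-representations. Writing the eigenvalues of $\gamma$ on $V^{\vee}$ as $e(\alpha_{1}), \ldots, e(\alpha_{g})$ with $\alpha_{i} \in [0, 1)$, a direct count of eigenvalues yields
\begin{equation*}
RT_{\gamma}(T_{p}\Xn) \; = \; n \sum_{i=1}^{g} \alpha_{i} \; + \sum_{1 \leq i \leq j \leq g} \{ \alpha_{i} + \alpha_{j} \},
\end{equation*}
where $\{ x \}$ denotes the fractional part. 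Since $V \oplus \bar{V} = \Lambda_{\C}$ and $V^{\vee} \simeq \bar{V}$ as $\langle\gamma\rangle$-modules, Lemma \ref{lem: distribution} determines the admissible multisets $(\alpha_{i})$ from the $\Q$-representation type $\Lambda_{\Q} = \bigoplus \V_{d_{i}}$: each summand $\V_{d}$ with $d > 2$ contributes one eigenvalue from each conjugate pair to $V^{\vee}$, while $\V_{1}$ and $\V_{2}$ contribute half of their (even) multiplicities.

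The proof then proceeds by case analysis on this decomposition. When $\gamma$ has order $2$, the formula evaluates to $k(n+g-k)/2$, where $k \geq 1$ is the multiplicity of $-1$ on $V^{\vee}$; this is $\geq 1$ under $g \geq 2$, $n \geq 1$. When $\gamma$ has order $> 2$, one uses the elementary lower bound $RT_{\gamma} \geq n \sum_{i} \alpha_{i}$ to dispose of all cases with $\sum_{i} \alpha_{i} \geq 1/n$; the remaining cases reduce to $\Lambda_{\Q}$ containing a single nontrivial $\V_{d}$ with $d > 2$ of multiplicity one, in which case the formula collapses to $(n+g-1)(c/d) + \{2c/d\}$ for some $c$ coprime to $d$. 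A direct enumeration over $d \in \{3, 4, 5, 6\}$ and $c \in (\Z/d)^{\times}$ shows this value is $\geq 1$ except at $d = 6$, $c = 1$, $n+g \leq 4$, giving precisely the three listed configurations with $\gamma$ the element of eigenvalues $e(\pm 1/6), 1, \ldots, 1$. The main obstacle is the completeness of this enumeration: one must verify both that richer $\Q$-representation types (mixed sums such as $\V_{6} \oplus \V_{2}^{\oplus 2}$ or $\V_{6}^{\oplus 2}$, and multiplicities $> 1$) always yield $RT_{\gamma} \geq 1$, and that $\V_{d}$ for $d > 6$ with $\varphi(d) \leq 2g$ (such as $d = 7, 8, 10, 12$) contribute large enough values to satisfy the inequality; both reduce to a short finite check.
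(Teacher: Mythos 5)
Your strategy is the same as the paper's: apply the Reid--Shepherd-Barron--Tai criterion to $T_{p}\Xn \simeq (V^{\vee})^{\oplus n} \oplus \mathrm{Sym}^{2}V^{\vee}$, invoke Lemma~\ref{lem: unramify interior} to rule out quasi-reflections, and use Lemma~\ref{lem: distribution} to constrain the eigenvalue multisets according to the decomposition of $\Lambda_{\Q}$ into $\V_{d}$'s. Your explicit formula $RT_{\gamma} = n\sum_{i}\alpha_{i} + \sum_{i\leq j}\{\alpha_{i}+\alpha_{j}\}$, the order-$2$ computation $k(n+g-k)/2$, and the isolation of the exceptional $d=6$, $c=1$, $n+g\leq 4$ case are all correct. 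However, the argument has a genuine gap at the step you yourself flag in the closing sentence and then dismiss as ``a short finite check.''

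Two issues make that step not short and not finite as stated. First, the claimed reduction of the ``remaining cases'' (those with $\sum_{i}\alpha_{i} < 1/n$) to $\Lambda_{\Q}$ containing a single nontrivial $\V_{d}$ of multiplicity one is unproven; for instance $\V_{3}^{\oplus 2}\oplus\V_{1}^{\oplus 2g-4}$ with $V^{\vee}\supset \chi_{1/3}^{\oplus 2}$ has $\sum\alpha_{i}=2/3<1$ when $n=1$, so it survives your first filter, and one must argue separately that $\mathrm{Sym}^{2}V^{\vee}$ then contributes enough. Second, your collapsed formula $(n+g-1)(c/d)+\{2c/d\}$ is only valid when $\varphi(d)=2$, i.e.\ $d\in\{3,4,6\}$; including $d=5$ in that enumeration is an error since $\varphi(5)=4$ gives $W_{d}$ two eigenvalues. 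More importantly, for $d>6$ with $\varphi(d)>2$ the set of relevant $d$ grows without bound as $g$ grows, so a case-by-case enumeration cannot close the argument. The paper handles exactly this situation with a \emph{uniform} estimate (case~(1) of Lemma~\ref{lem: RT calculate}): by examining how many eigenvalues of the subrepresentation $W_{d}\subset V$ lie in $(1/2,1)$, it shows $RT>1$ for every $d$ with $\varphi(d)>2$ in three subcases, using only that $W_{d}\oplus\bar{W}_{d}\simeq V_{d}$ forces the maximal eigenvalue of $W_{d}$ in $(0,1/2)$ to exceed $1/4$ when none lie in $(1/2,1)$. You would need to supply an argument of this kind to complete the proof; without it, the claim that only $(g,n)\in\{(2,1),(3,1),(2,2)\}$ with the stated order-$6$ element can fail is asserted but not established.
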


\begin{proof}
Recall from \S \ref{ssec: quotient} 
that ${\XG}={\Xn}/{\G}$. 
Let $p=([V], x_{1}, \cdots , x_{n})$ be a point of ${\Xn}$ 
where $[V]\in{\D}$ and $x_{i}\in V^{\vee}/\Lambda$. 
It suffices to show that $T_{p}{\Xn}/\Gamma_{p}$ has canonical singularities 
where $\Gamma_{p}<{\G}$ is the stabilizer of $p$. 
By Lemma \ref{lem: unramify interior}, $\Gamma_{p}$ contains no quasi-reflection on $T_{p}{\Xn}$. 
Thus it suffices to show that 
$RT_{\gamma}(T_{p}{\Xn})\geq1$ for every $\gamma\ne {\rm id} \in\Gamma_{p}$. 
Since 
\begin{equation*}
T_{p}{\Xn} 
\:  \simeq  \: 
T_{x_{1}}(V^{\vee}/\Lambda) \oplus \cdots \oplus 
T_{x_{n}}(V^{\vee}/\Lambda) \oplus T_{[V]}{\D} 
\: \simeq \: 
(V^{\vee})^{\oplus n} \oplus {\sym}V^{\vee} 
\end{equation*}
as $\Gamma_{p}$-representations, 
we are reduced to the following calculation in linear algebra. 
(We rewrite $V^{\vee}$ as $V$, 
and $\Lambda_{{\Q}}^{\vee}$ as $\Lambda_{{\Q}}$.) 
\end{proof}

\begin{lemma}\label{lem: RT calculate}
Let $G=\langle \gamma \rangle$ be a finite cyclic group and 
$\Lambda_{{\Q}}$ a representation of $G$ over ${\Q}$ of dimension $2g>2$. 
Assume that $G$ preserves a Hodge decomposition 
$\Lambda_{{\C}} = V \oplus \bar{V}$  
and that $(n, \Lambda_{{\Q}})$ is neither of the following: 
\begin{itemize}
\item $n=1$, 
$\Lambda_{{\Q}}\simeq \mathbb{V}_6\oplus \mathbb{V}_1^{\oplus 2g-2}$ 
with $g=2, 3$. 
\item $n=2$, 
$\Lambda_{{\Q}}\simeq \mathbb{V}_6\oplus \mathbb{V}_1^{\oplus 2}$ ($g=2$). 
\end{itemize}
Then 
$RT_{\gamma}(V^{\oplus n}\oplus {\sym}V) \geq 1$. 
\end{lemma}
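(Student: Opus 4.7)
Write $\alpha_1, \ldots, \alpha_g \in [0,1) \cap \Q$ for the angles of the eigenvalues of $\gamma$ on $V$, so that
\begin{equation*}
RT_\gamma(V^{\oplus n} \oplus {\sym}V) = n \sum_{j=1}^g \alpha_j + \sum_{1 \leq j \leq k \leq g} \{\alpha_j + \alpha_k\},
\end{equation*}
where $\{\,\cdot\,\}$ denotes the fractional part in $[0,1)$. I plan to split the argument into two cases according to whether some $\alpha_j$ reaches $1/2$ or all $\alpha_j$ remain strictly below.

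\textbf{Case I: some $\alpha_j \geq 1/2$.} Say $\alpha_1 \geq 1/2$; then $n\alpha_1 \geq n/2$. When $\alpha_1 = 1/2$, writing $t$ for the number of $\alpha_j$'s equal to $1/2$ (with the remaining ones zero, after handling larger values separately), a direct computation yields $RT_\gamma = t(n+g-t)/2 \geq 1$ for $1 \leq t \leq g$, $g \geq 2$, $n \geq 1$. When some $\alpha_j$ strictly exceeds $1/2$, the diagonal term $\{2\alpha_j\} = 2\alpha_j - 1$ together with the off-diagonals $\{\alpha_j + \alpha_k\}$ for zero $\alpha_k$'s gives $RT_\gamma \geq (g+2)\alpha_j - 1 \geq 1$, using $\alpha_j > 1/2$ and $g \geq 2$.

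\textbf{Case II: all $\alpha_j < 1/2$.} Every $\alpha_j + \alpha_k < 1$, so $\{\alpha_j + \alpha_k\} = \alpha_j + \alpha_k$; a multiplicity count (each $\alpha_l$ appears $g+1$ times in the symmetric sum) collapses the Reid--Tai expression into
\begin{equation*}
RT_\gamma(V^{\oplus n} \oplus {\sym}V) = (n + g + 1) S, \qquad S := \sum_{j=1}^g \alpha_j,
\end{equation*}
so the goal becomes $S \geq 1/(n+g+1)$. Applying Lemma \ref{lem: distribution} to $\Lambda_{\Q} = \bigoplus_d \mathbb{V}_d^{\oplus k_d}$, each copy of $\mathbb{V}_d$ with $d \geq 3$ contributes to $V$ exactly one representative from each conjugate pair $\{j/d, (d-j)/d\}$ of primitive $d$-th root angles, and the Case II hypothesis forces that representative to be the smaller one, $j/d$ with $j < d/2$. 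Setting $f(d) := \sum_{1 \leq j < d/2,\, \gcd(j,d)=1} j$, one obtains $S \geq \sum_d k_d f(d)/d$; and since $\gamma \neq 1$ while $\mathbb{V}_2$ is precluded by Case II (it would force $\alpha = 1/2$), some $k_d \geq 1$ with $d \geq 3$.

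\textbf{The crux.} Direct tabulation gives $f(3)/3 = 1/3$, $f(4)/4 = 1/4$, $f(5)/5 = 3/5$, $f(6)/6 = 1/6$, $f(7)/7 = 6/7$, $f(8)/8 = 1/2$; a check of small $d \leq 30$ combined with a crude general lower bound for larger $d$ confirms $f(d)/d \geq 1/4$ for every $d \geq 3$ with $d \neq 6$. Since $n+g+1 \geq 4$, this yields $(n+g+1) S \geq 1$ whenever some $\mathbb{V}_d$ with $d \neq 6$ contributes to $S$, or whenever $\mathbb{V}_6$ appears with multiplicity at least $2$. The unique remaining configuration is $\Lambda_{\Q} \simeq \mathbb{V}_6 \oplus \mathbb{V}_1^{\oplus 2g-2}$ with the Hodge structure placing $e(1/6)$, rather than $e(5/6)$, in $V$; here $S = 1/6$ and $RT_\gamma = (n+g+1)/6$, which falls below $1$ precisely when $n+g \leq 4$, recovering exactly the three excluded pairs $(g,n) \in \{(2,1),(3,1),(2,2)\}$. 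The main obstacle in executing this plan is the arithmetic estimate $f(d)/d \geq 1/4$ for $d \neq 6$, which is elementary but requires careful case-by-case bookkeeping for small $d$.
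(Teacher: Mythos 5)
Your Case II argument is correct and genuinely different from the paper's: the paper handles the ``all angles below $1/2$'' configurations one representation type $\V_d$ at a time, whereas your collapse $RT_\gamma = (n+g+1)S$ treats them uniformly and reduces everything to the single arithmetic estimate $f(d)/d \geq 1/4$ for $d \geq 3$, $d \neq 6$. (You leave that as a tabulation; for a clean uniform bound note that for odd $d\geq 3$ the residue $(d-1)/2$ is coprime to $d$ and lies below $d/2$, so $f(d)\geq (d-1)/2 \geq d/4$; for even $d$ one of $d/2-1$, $d/2-2$ is coprime, giving $f(d)\geq d/2-2\geq d/4$ for $d\geq 8$, with $d=4$ giving equality.) Your identification of $\Lambda_\Q \simeq \V_6\oplus\V_1^{\oplus 2g-2}$, $V\supset\chi_{1/6}$, as the unique surviving configuration, with $RT_\gamma=(n+g+1)/6$, matches the paper exactly.

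Case I, however, has a genuine gap. Both of your formulas --- $RT_\gamma = t(n+g-t)/2$ when $\alpha_1=1/2$, and $RT_\gamma \geq (g+2)\alpha_j - 1$ when $\alpha_j>1/2$ --- are derived under the tacit hypothesis that every non-maximal angle equals $0$. Nothing forces this: $V$ may mix eigenvalues from several $\V_d$'s, so an angle in $[1/2,1)$ can coexist with angles in $(0,1/2)$. Concretely, with $n=1$, $g=2$, $\alpha_1=3/4$, $\alpha_2=1/3$, the off-diagonal contribution is $\{\alpha_1+\alpha_2\}=\{13/12\}=1/12$, which is far below $\alpha_1$, so the inequality $RT_\gamma\geq(g+2)\alpha_1-1=2$ is simply false (the conclusion $RT_\gamma\geq 1$ still holds here, but via $n\sum_j\alpha_j = 13/12>1$, a different mechanism). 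The parenthetical ``after handling larger values separately'' does not address this: the problem is the presence of small positive angles, not of additional large ones.

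The repair is cheap and, pleasingly, brings Case I under the same umbrella as Case II. First discard the scenarios where $n\sum_j\alpha_j\geq 1$ already; these cover two or more angles $\geq 1/2$ as well as any pair with $\alpha_j+\alpha_k\geq 1$. What remains has exactly one angle $\alpha_1\geq 1/2$ with $\alpha_1+\alpha_k<1$ for all $k\geq 2$ and $\alpha_j+\alpha_k<1$ for $j,k\geq 2$, so the only wrap-around in the Reid--Tai sum is $\{2\alpha_1\}=2\alpha_1-1$. The Case II bookkeeping then yields $RT_\gamma=(n+g+1)S-1$, and $S\geq\alpha_1\geq 1/2$ together with $n+g\geq 3$ gives $RT_\gamma\geq(n+g-1)/2\geq 1$.
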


\begin{proof}
As $G$-representation, one of the following cases occur: 
\begin{enumerate}
\item ${\LQ}\supset {\V}_d$, $\varphi(d)>2$; 
\item ${\LQ}\supset {\V}_3$; 
\item ${\LQ}\supset {\V}_4$; 
\item ${\LQ}\supset {\V}_6$; 
\item ${\LQ} = {\V}_1^{\oplus 2k} \oplus {\V}_2^{\oplus 2l}$. 
\end{enumerate}
We estimate the Reid-Tai sum case-by-case. 
As in the proof of Lemma \ref{ssec: distribution}, 
for a $G$-representation $W$ over ${\C}$, 
we write $\lambda(W)$ for the set of eigenvalues of $\gamma$ counted with multiplicity. 
By associating $\alpha\in[0, 1)$ to $e(\alpha)$, 
we identify elements of $\lambda(W)$ with rational numbers in $[0, 1)$.  
We write $S^{2}V={\sym}V$. 

We first consider the case $n=1$. 
We write $RT=RT_{\gamma}(V\oplus S^2V)$. 
We show that $RT\geq 1$ unless 
$\Lambda_{{\Q}}\simeq \mathbb{V}_6\oplus \mathbb{V}_1^{\oplus 2g-2}$ 
with $g\leq 3$. 

(1) 
Let $W_d\subset V$ be the sub $G$-representation 
such that $W_{d}\oplus \bar{W}_{d}\simeq V_{d}$ 
as constructed in Lemma \ref{lem: distribution}. 
Firstly, if $\lambda(W_d)$ contains two elements $\lambda, \lambda'$ from $(1/2, 1)$, 
we have $RT> \lambda + \lambda'>1$. 
Secondly, suppose that $\lambda(W_d)$ contains exactly one element $\lambda$ from $(1/2, 1)$. 
Since $W_d\oplus \bar{W_d}\simeq V_d$, 
every element of $\lambda(V_d)\cap (0, 1/2)$ except $1-\lambda$ appears in $\lambda(W_d)$. 
Let $\lambda'$ be the maximal element of $\lambda(W_d)\cap (0, 1/2)$. 
When $\lambda'>1-\lambda$, we have $RT\geq \lambda+\lambda'>1$. 
When $\lambda'<1-\lambda$, we have 
$\lambda+\lambda'\in\lambda(S^2V)$ and $\lambda+\lambda' < 1$. 
Then $RT\geq \lambda+(\lambda+\lambda')>2\lambda>1$. 
Thirdly, if all elements of $\lambda(W_d)$ are contained in $(0, 1/2)$, 
we have $\lambda(W_d)=\lambda(V_d)\cap (0, 1/2)$ 
by $V_d\simeq W_d\oplus \bar{W_d}$. 
Let $\lambda$ be the maximal element of $\lambda(W_d)$. 
Then $\lambda > 1/4$. 
Since $\lambda(S^{2}V)$ contains $\lambda+\lambda'<1$ for every $\lambda'\in \lambda(V)$, 
we have $RT>(2+\varphi(d)/2)\lambda \geq 4\lambda >1$.

(2) 
By Lemma \ref{lem: distribution}, either 
$\chi_{1/3}\subset V$ or $\chi_{2/3}\subset V$. 
In the first case, we have $1/3\in\lambda(V)$ and $2/3\in\lambda(S^2V)$, 
so $RT\geq 1/3+2/3=1$. 
In the second case, we have $2/3\in\lambda(V)$ and $1/3\in\lambda(S^2V)$, 
so again $RT\geq 1$.

(3) 
Since $g>1$, we have ${\LQ}\ne{\V}_4$. 
In view of the cases (1), (2), we only need to consider the case 
${\V}_4\oplus{\V}_d\subset {\LQ}$ with $d=1, 2, 4, 6$. 
When ${\V}_4^{\oplus2}\subset {\LQ}$, $\lambda(V)$ contains two elements from 
$\{ 1/4, 1/4, 3/4, 3/4 \}$ by Lemma \ref{lem: distribution}, 
so $\lambda(S^2V)$ contains two $1/2$  
and hence $RT>1$. 
When ${\V}_4\oplus{\V}_6\subset {\LQ}$, 
$\lambda(V)$ contains 
$\{ 1/4 \, \textrm{or} \, 3/4, 1/6 \, \textrm{or} \, 5/6 \}$ 
by Lemma \ref{lem: distribution}. 
Then $\lambda(S^2V)$ contains $\{ 1/2, 1/3 \, \textrm{or} \, 2/3 \}$. 
Hence $RT>1$. 
Similarly, when ${\V}_4\oplus{\V}_2\subset {\LQ}$, 
$\lambda(V)$ contains $\{ 1/2, 1/4 \, \textrm{or} \, 3/4 \}$, and so 
$\lambda(S^2V)$ contains $\{ 3/4 \, \textrm{or} \, 1/4, 1/2 \}$, 
which implies $RT>1$. 
Finally, when ${\V}_4\oplus{\V}_1\subset {\LQ}$, 
$\lambda(V)$ contains $\{ 0, 1/4 \, \textrm{or} \, 3/4 \}$, and so 
$\lambda(S^2V)$ contains $\{ 1/2, 1/4 \, \textrm{or} \, 3/4 \}$. 
This proves $RT\geq1$.

(4) 
In view of the cases (1) -- (3), 
we only need to cover the cases 
\begin{equation*}
{\LQ} \supset {\V}_6\oplus{\V}_6, \quad 
{\LQ} \supset {\V}_6\oplus{\V}_2, \quad 
{\LQ} = {\V}_6\oplus{\V}_1^{\oplus 2g-2} \: \: (g \geq 4). 
\end{equation*}
When ${\V}_6^{\oplus2}\subset{\LQ}$, 
$\lambda(V)$ contains two elements from 
$\{ 1/6, 1/6, 5/6, 5/6 \}$ by Lemma \ref{lem: distribution}, 
so $\lambda(S^2V)$ contains two elements from 
$\{ 1/3, 1/3, 2/3, 2/3 \}$. 
It follows that $RT\geq1$. 
When ${\V}_6\oplus{\V}_2\subset{\LQ}$, 
$\lambda(V)$ contains 
$\{ 1/2,  1/6 \, \textrm{or} \, 5/6 \}$, 
so $\lambda(S^2V)$ contains $1/3$ or $2/3$. 
Thus $RT\geq1$. 
Finally, when ${\LQ} = {\V}_6\oplus{\V}_1^{\oplus 2g-2}$, 
we have 
$V\simeq \chi_{\pm1/6}\oplus \chi_1^{\oplus g-1}$. 
If $V\simeq \chi_{5/6}\oplus \chi_1^{\oplus g-1}$, 
then $\chi_{5/6}\subset S^2V$, and so $RT>1$. 
If $V\simeq \chi_{1/6}\oplus \chi_1^{\oplus g-1}$, we have 
$RT=1/3+g/6\geq 1$ 
by $g\geq4$.

(5) 
In this case we have 
$V\simeq \chi_{1}^{\oplus k}\oplus \chi_{1/2}^{\oplus l}$ by Lemma \ref{lem: distribution}. 
When $l\geq2$, we have 
$RT\geq l/2 \geq1$. 
When $l=1$, we have $k\geq1$ by $g\geq2$. 
Then $1/2\in\lambda(V)$ and $1/2\in \lambda(S^2V)$, 
so $RT\geq1$. 
This finishes the proof for the case $n=1$. 

Next let $n\geq2$. 
Since 
$RT_{\gamma}(V^{\oplus n}\oplus S^2V) 
\geq RT_{\gamma}(V\oplus S^2V)$, 
we only need to consider the case 
${\LQ} = {\V}_6\oplus{\V}_1^{\oplus 2g-2}$ 
and $V\simeq \chi_{1/6}\oplus \chi_{1}^{\oplus g-1}$ 
with $g=2, 3$ by the above proof for the case $n=1$. 
In this case the Reid-Tai sum is 
$1/3+(g+n-1)/6$, 
which is smaller than $1$ only when $(g, n)=(2, 2)$. 
This completes the proof of Lemma \ref{lem: RT calculate} 
and hence of Proposition \ref{prop: cano sing interior}. 
\end{proof}


\end{document}